\documentclass[12pt]{amsart} 

\usepackage{amsmath, amsfonts,amssymb}
\usepackage[usenames]{color} 
\usepackage[all]{xy}  
\usepackage[ top=1in, bottom=1in, left=1.5in, right=1.5in ]{geometry}
\usepackage[colorlinks]{hyperref}
\usepackage{fullpage}
\usepackage{mathrsfs}
\usepackage{mathabx}
\allowdisplaybreaks

\theoremstyle{plain} 

\newtheorem{Teo}{Theorem}[section]

\newtheorem{Lemma}[Teo]{Lemma}
\newtheorem{Prop}[Teo]{Proposition}
\newtheorem{Cor}[Teo]{Corollary}

\theoremstyle{definition}
\newtheorem{Def}[Teo]{Definition}
\newtheorem{Rem}[Teo]{Remark}
\newtheorem{Alg}[Teo]{Algorithm}

\newtheorem{Ex}[Teo]{Example}

\newtheorem{Notation}[Teo]{Notation}

\numberwithin{equation}{Teo}
\numberwithin{figure}{Teo}

\usepackage{color}   

\newcommand{\cN}{\mathcal{N}}

\newcommand{\cI}{\mathcal{I}}
\newcommand{\cL}{\mathcal{L}}
\newcommand{\cM}{\mathcal{M}}

\newcommand{\cV}{\mathcal{V}}
\newcommand{\NN}{\mathbb N}  
\newcommand{\ZZ}{\mathbb Z}
\newcommand{\FF}{\mathbb F}
\newcommand{\QQ}{\mathbb Q}
\newcommand{\PP}{\mathbb P}

\newcommand{\Hom}{\operatorname{Hom}}

\newcommand{\Supp}{\operatorname{Supp}}

\newcommand{\Spec}{\operatorname{Spec}}

\newcommand{\Length}{\operatorname{length}}

\newcommand{\Sing}{\operatorname{Sing}}
\newcommand{\FSing}{\operatorname{Sing}_F}
\newcommand{\Min}{\operatorname{min}}

\newcommand{\Jac}{\operatorname{Jac}}

\newcommand{\FDer}[1]{\stackrel{#1}{\to}}

\newcommand{\FJac}[2]{\mathfrak{J}_F ({#2}^{#1})}

\begin{document}
\title{$F$-jumping and $F$-Jacobian ideals for hypersurfaces}
\author{Luis N\'u\~nez-Betancourt and Felipe P\'erez}

\maketitle 

\begin{abstract}
We introduce two families of ideals, $F$-jumping ideals and  $F$-Jacobian ideals,
in order to study the singularities of hypersurfaces in positive characteristic.
Both families are defined using the $D$-modules $M_{\alpha}$ that were
introduced by Blickle, Musta\c{t}\u{a} and Smith.
Using strong connections between 
$F$-jumping ideals and generalized test ideals, we give a characterization 
of $F$-jumping numbers for hypersurfaces.
Furthermore, we give an algorithm that determines whether 
certain numbers are $F$-jumping numbers. 
In addition, we use $F$-Jacobian
ideals
to study intrinsic properties of the singularities of hypersurfaces. In particular, we
give conditions for $F$-regularity. 
Moreover, $F$-Jacobian ideals behave similarly to Jacobian ideals of polynomials.
Using techniques developed to study these two new families of ideals, 
we provide
relations among test ideals, generalized test ideals, and generalized
Lyubeznik numbers for hypersurfaces.
\end{abstract}

\tableofcontents 

\section{Introduction}
The aim of this note is to introduce two families of ideals, 
$F$-jumping ideals and $F$-Jacobian ideals. Both families of
ideals measure singularities in positive characteristic. The $F$-jumping
ideal is closely related to the test ideal introduced by Hara
and Yoshida \cite{H-Y}, while the $F$-Jacobian ideal is strongly
connected with the original test ideal defined by Hochster and Huneke \cite{HoHu1}. These two families are related through $M_{\alpha},$
the modules introduced by Blickle, Musta\c{t}\u{a} and Smith \cite{BMS-Hyp}, 
and the theory of $D$-modules and $F$-modules \cite{LyuFMod,Ye}.
Roughly speaking, the $F$-jumping ideal associated to an element $f$ in a regular
$F$-finite ring, $R,$ and to a rational number $\alpha=\frac{r}{p^{e}-1},$
gives the unique simple $D$-submodule of $M_{\alpha}$. If $\alpha=1$, then
$M_{\alpha}\cong R_{f}$ with the usual  $D$-module structure. Moreover,
the unique simple $D$-submodule of $M_\alpha$ is $R$. Under additional
hypotheses, the $F$-Jacobian ideal determines the sum of the simple
$F$-submodules of $R_{f}/R$.  

Several of our results are obtained by investigating properties of $F^e$-submodules of $M_\alpha$ through ideals $I\subset R$ such
that $f^rI\subset I^{[p^e]}.$ We pay particular attention to ideals such that $I=(I^{[p^e]}:f^r).$ This is given by 
Lyubeznik's study of $F$-modules via root maps \cite{LyuFMod}.

\subsection{F-jumping ideals.}
Let $R$ be an $F$-finite regular ring, 
$f$ an element of
$R$ and $\alpha$ a positive real number. The test ideal, $\tau(f^{\alpha}),$
was introduced by Hara and Yoshida  as an analogue of the multiplier
ideal in positive characteristic \cite{H-Y}. The $F$-jumping numbers for $f$
are defined as the positive real numbers $c,$ such that $\tau(f^{c-\epsilon})\not=\tau(f^{c})$
for every $\epsilon >0$. Blickle,  Musta\c{t}\u{a} and Smith showed  that
the $F$-jumping numbers are rational and form a discrete set \cite{BMS-Hyp}. 
These numbers encode important information about the singularity of $f$ (cf. \cite{BFS}).

Let $\alpha$ be the rational number $\frac{r}{p^{e}-1}$, with $r$
a positive integer. $M_{\alpha}$ is $R_{f}\cdot e_{\alpha}$ as an $R$-module,
where $e_{\alpha}$ is a formal symbol. The $D$-module structure
depends on the number $\alpha$ \cite[Remark $2.4$]{BMS-Hyp}. However, 
it does not depend on the on the representation of $\alpha=\frac{r}{p^{e}-1}$. 

One of the main aims of this work is to study the simplicity of $M_\alpha$  as a $D$-module and as a $F^e$-module. 
In Proposition \ref{AlphaSimpleFMod} and Remark \ref{AlphaSimpleDMod}
we show that $N_\alpha=D\cdot f^{\lceil \alpha \rceil} e_\alpha$ is 
the unique simple $D$-submodule of $M_\alpha.$ 
This raises the definition of the
$F$-jumping ideal $\frak{J}(f^{\alpha})$ as the ideal such that 
$$
\frak{J}(f^{\alpha})e_\alpha=R\cdot e_{\alpha}\cap N_{\alpha}.
$$ 
We characterize the simplicity of $M_\alpha$ in the following theorem:

\begin{Teo}\label{MainAlpha}
Let $R$ be an $F$--finite regular domain
and $f\in R$ be a nonzero element.
The following are equivalent:
\begin{itemize}
\item[\rm{(i)}] $\alpha$  
is not an $F$-jumping number;  
\item[\rm{(ii)}] $N_\alpha=M_\alpha$;
\item[\rm{(iii)}] $M_\alpha$ is a simple $D$-module;
\item[\rm{(iv)}] $M_\alpha$ is a simple $F^e$-module;
\item[\rm{(v)}] $\FJac{\alpha}{f}= R.$
\end{itemize}
\end{Teo}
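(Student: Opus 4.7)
The plan is to prove the chain (ii)$\Leftrightarrow$(iii)$\Leftrightarrow$(v), fold (iv) in via the $F^e$--$D$-module dictionary, and link to (i) through the connection with generalized test ideals established earlier in the paper. For (ii)$\Leftrightarrow$(iii), the key input is Proposition~\ref{AlphaSimpleFMod} together with Remark~\ref{AlphaSimpleDMod}, which identify $N_\alpha$ as \emph{the} unique simple $D$-submodule of $M_\alpha$; this means $N_\alpha$ sits inside every nonzero $D$-submodule, and hence $M_\alpha$ is simple as a $D$-module precisely when it coincides with $N_\alpha$.

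For (ii)$\Leftrightarrow$(v), the definition $\FJac{\alpha}{f}\cdot e_\alpha=R\cdot e_\alpha\cap N_\alpha$ makes (ii)$\Rightarrow$(v) immediate. The converse is the main technical point of this block: once we know $\FJac{\alpha}{f}=R$, i.e.\ $e_\alpha\in N_\alpha$, the simplicity of $N_\alpha$ yields $D\cdot e_\alpha=N_\alpha$, and the remaining equality $D\cdot e_\alpha=M_\alpha$ is obtained from the $F$-finite regular structure together with the analogous statement that $1$ generates $R_f$ as a $D$-module (so that the twisted module $R_f\cdot e_\alpha$ is cyclic over $D$ once $e_\alpha$ itself is available).

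For the equivalence with (iv), the implication (iii)$\Rightarrow$(iv) is formal: under the unit $F^e$-module structure every $F^e$-submodule is automatically a $D$-submodule, so $D$-simplicity implies $F^e$-simplicity. Conversely, I would verify that $N_\alpha=D\cdot f^{\lceil\alpha\rceil}e_\alpha$ is stable under the structural map $\theta\colon F^{e*}M_\alpha\to M_\alpha$; this reduces to an explicit computation on elements of the form $r\cdot f^{\lceil\alpha\rceil}e_\alpha$, using the twist by $f^r$ with $r=\alpha(p^e-1)$ and comparing $p^e\lceil\alpha\rceil$ with $\lceil\alpha\rceil+r$. Granted this, $N_\alpha$ is a nonzero $F^e$-submodule and (iv) forces $N_\alpha=M_\alpha$, which is (ii).

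Finally, for (i)$\Leftrightarrow$(v), I would invoke the relation between $F$-jumping ideals and generalized test ideals developed earlier in the paper: $\FJac{\alpha}{f}=R$ is equivalent to $\tau(f^{\alpha-\epsilon})=\tau(f^\alpha)$ for $0<\epsilon\ll 1$, i.e.\ to $\alpha$ not being an $F$-jumping number. The two ``non-formal'' steps in the whole argument are showing $D\cdot e_\alpha=M_\alpha$ in (v)$\Rightarrow$(ii) and verifying the $F^e$-stability of $N_\alpha$ in (iv)$\Rightarrow$(ii); both require careful tracking of the twisted $D$- and $F^e$-actions on the symbols $f^{-n}e_\alpha$ rather than purely formal reductions, and I expect them to be the main obstacles.
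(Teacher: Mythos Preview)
Your overall architecture is close to the paper's, but there is a genuine gap in your (v)$\Rightarrow$(ii) step, and your route for (i) diverges from the paper's in a way worth noting.

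\textbf{The gap in (v)$\Rightarrow$(ii).} You argue that $\FJac{\alpha}{f}=R$ gives $e_\alpha\in N_\alpha$, hence $D\cdot e_\alpha=N_\alpha$ by simplicity, and then you want $D\cdot e_\alpha=M_\alpha$. Your justification is the ``analogous statement that $1$ generates $R_f$ as a $D$-module.'' That analogy is false: $R$ is a proper $D$-submodule of $R_f$, so $D\cdot 1=R\subsetneq R_f$. Under the identification $M_1\cong R_f$ the element $e_1$ corresponds to $1/f$, not to $1$, and it is $D\cdot(1/f)=R_f$ that holds. The claim $D\cdot e_\alpha=M_\alpha$ is in fact true, but establishing it amounts to the computation behind Proposition~\ref{MinimalRoot} (one must show that $D^{es}\cdot e_\alpha$ eventually contains $f^{-m}e_\alpha$ for every $m$, which unwinds to $(f^{a_s-m})^{[1/p^{es}]}\subset\tau(f^{\alpha-\epsilon})$ for $s\gg 0$). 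The paper sidesteps this entirely: by Proposition~\ref{AlphaFMod}, $N_\alpha$ is the $F^e$-submodule with root $\FJac{\alpha}{f}\xrightarrow{f^r}\FJac{\alpha}{f}^{[p^e]}$, so once $\FJac{\alpha}{f}=R$ this root coincides with the defining root of $M_\alpha$ and $N_\alpha=M_\alpha$ is immediate. You actually have the right tool in hand, since you verify $F^e$-stability of $N_\alpha$ for (iv)$\Rightarrow$(ii); using that here would close the gap without needing $D\cdot e_\alpha=M_\alpha$ at all.

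\textbf{The link to (i).} You assert a direct equivalence $\FJac{\alpha}{f}=R\Leftrightarrow\tau(f^\alpha)=\tau(f^{\alpha-\epsilon})$ ``developed earlier in the paper,'' but no such statement is proved prior to the theorem. The paper instead proves (i)$\Leftrightarrow$(ii) via minimal roots: Proposition~\ref{TestIdealMinimalRoot} shows $\tau(f^\alpha)$ is the minimal root of $N_\alpha$, and Proposition~\ref{MinimalRoot} shows $\tau(f^{\alpha-\epsilon})$ is the minimal root of $M_\alpha$; hence $N_\alpha=M_\alpha$ if and only if these two test ideals coincide, i.e.\ if and only if $\alpha$ is not an $F$-jumping number. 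Your (ii)$\Leftrightarrow$(iii)$\Leftrightarrow$(iv) arguments match the paper's use of Proposition~\ref{AlphaSimpleFMod} and Corollary~\ref{AlphaSimpleDMod}.
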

Moreover, the test ideal $\tau(f^{\alpha})$ is the minimal root for $N_\alpha$ and
$\tau(f^{\alpha-\epsilon})$ is the minimal root for $M_\alpha$ (see Propositions \ref{TestIdealMinimalRoot} and \ref{MinimalRoot}). 
Additionally, in Algorithm \ref{Algoritmo} we present a process to decide whether
$\alpha$ is an $F$-jumping number.

\subsection{F-Jacobian ideals.}

Suppose that $S=K[x_1,\ldots,x_n]$ is a polynomial ring over a perfect field, $K,$ and $f\in S.$
The Jacobian ideal is defined by 
$\Jac(f)=(f,\frac{\partial f}{\partial_{x_{1}}},\ldots, \frac{\partial f}{\partial_{x_{n}}}).$ 
This ideal plays a fundamental role in the study of singularity in zero and positive characteristics.
In this case, 
$\Jac(f)=R$ if and only if $R/fR$ is a regular ring. 
Another important property is that
$\Jac(fg)\subset f\Jac(g)+g\Jac(f)$ for $f,g\in S,$ which is given by the Leibniz rule.
The equality in the previous 
containment holds only in specific cases \cite[Proposition $8$]{Eleonore} and it is used to study 
transversality of singular varieties \cite{Eleonore, EleonorePaolo}.

Let $R$ be an $F$-finite regular local ring. We define the $F$-Jacobian
ideal, $J_{F}(f),$ to be the intersection of $R\cdot e_{1}\subseteq M_{1}$
with the sum of the minimal $F$-submodules in $M_{1}$ properly containing
the unique simple $F$-module. 
The $F$-Jacobian ideal behaves
similarly to the Jacobian ideal of a polynomial. 
As the Jacobian ideal, they determine singularity:
\begin{itemize}
\item if $R/fR$ is $F$-regular, then $J_{F}(f)=R$  (Corollary \ref{F-Jac F-SingLocus});
\item if $R/fR$ is $F$-pure, then $R/fR$ is $F$-regular if
and only if $J_{F}(f)=R$ (Corollary \ref{F-Jac TestIdeal}).
\item If $f$ has an isolated singularity and $R/fR$ is $F$-pure, then $J_{F}(f)=R$ if $R/fR$
is $F$-regular, and $J_{F}(f)=m$ otherwise (Proposition \ref{FpureIsolated}).
\end{itemize}
In addition, the $F$-Jacobian ideal also satisfies a Leibniz rule:
$J_{F}(fg)=fJ_{F}(g)+gJ_{F}(f)$ for relatively primes elements $f,g\in R$ (Proposition \ref{F-JacFactors}).
The Leibniz rule in characteristic zero is important in the study of
transversality of singular varieties and free divisors over the complex numbers \cite{Eleonore,EleonorePaolo}.

The $F$-Jacobian ideals behave well with $p^e$-th powers;
$J_{F}(f^{p^{e}})=J_{F}(f)^{[p^{e}]}$ (Proposition \ref{F-JacPrimePower}). 
This is a technical property that 
was essential in several proofs. 
This contrasts with how the Jacobian ideal changes with $p^e$-th powers:
$$\Jac(f^{p^e})=(f^{p^e},p^e\frac{\partial f^{p^e-1}}{\partial x_1},\ldots,p^e\frac{\partial f^{p^e-1}}{\partial x_n})
=f^{p^e}R\neq (f^{p^e},\frac{\partial f}{\partial x_1}^{p^e},\ldots,\frac{\partial f}{\partial x_n}^{p^e})=\Jac(f)^{[p^e]}$$

The $F$-Jacobian ideal can be computed from the test ideal in certan cases and they are strongly related 
(see Proposition \ref{DefFlag}). 
However, they are not the same (see Example \ref{x3y3z3}). 
Moreover, the $F$-Jacobian can be defined for elements such that $R/fR$ is not reduced and satifies properties 
that the test ideal does not (eg.  Propositions  \ref{F-JacFactors} and \ref{F-JacPrimePower}).

Furthermore, we define the $F$-Jacobian ideal for a regular
$F$-finite UFD, $R,$ such that $R_{f}/R$ has finite length as a $D$-module
(Section \ref{SecFJacUFD}). We also define the $F$-Jacobian ideal for a ring which is essentially of finite type over an 
$F$-finite local ring (Section \ref{SecFJacVar}). 
Both definitions agree for rings that belog to the previous families (Corollary \ref{CorDefAgree}).

\subsection{Further consequences and relations}

Using ideas and techniques developed to define and study $F$-jumping ideals and $F$-Jacobian ideals,
we give bounds for the length of $R_f/R$ as an $F$-module.
One of these bounds is in terms of a flag of test ideals previously defined by 
Vassilev \cite{TestIdeals}.
If $R$ is an $F$-finite regular local ring  and $R/fR$ is $F$-pure, there exists a strictly ascending chain of ideals 
$$
fR=\tau_0\subset \tau_1\subset \ldots \subset \tau_\ell=R
$$
such that $(\tau^{[p]}_{i}:\tau_{i})\subset (\tau^{[p]}_{i+1}:\tau_{i+1})$ and $\tau_{i+1}$ is the pullback of the test ideal of $R/\tau_i.$

Another bound is given by the  generalized Lyubeznik numbers.
These are invariants associated to a local ring of equal characteristic defined by the first author and Witt \cite{NuWi}.
These numbers are defined using the $D$-module structure of local cohomology modules. 
For a hypersurface of particular interest is the Lyubeznik number 
$$
\lambda^{\dim(R/fR)}_0(R/fR;L):=\Length_{D(\widehat{R},L)-module}H^{1}_{f\widehat{R}}(\widehat{R}),
$$
where $\widehat{R}$ is the completion of $R$ with respect to its maximal ideal and $L$ is a coefficient field. 
Namely, the relations are:

\begin{Teo}\label{MainBounds}
Let $(R,m,K)$ be an $F$-finite regular local ring and $f\in R$ be an element, such that $R/fR$ is reduced.
Let $f=f_1\ldots f_s$ be a factorization of $f$ into irreducible elements.
Then,
$$
\Length_{F-\hbox{{\tiny mod}}} R_f/R
\leq\Length_R(\tau(f^{1-\epsilon})/\tau_f)+s,
$$
where $\tau_f$ is the pullback of the test ideal, $\tau(R/fR).$
Moreover, if $R/fR$ is $F$-pure, 
  and 
$$0\subset fR=\tau_0\subset \tau_1\subset\ldots\subset \tau_\ell=R$$ is the flag of ideals previously defined, then
$$
\ell\leq \Length_{F-\hbox{{\tiny mod}}} R_f/R \leq\lambda^{\dim(R/fR)}_0(R/fR;L)
$$
for every coefficient field, $L.$
\end{Teo}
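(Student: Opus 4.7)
The bridge for both parts is the identification $R_{f}\cong M_{1}$ as $F$-module with $R$ corresponding to $N_{1}$, so that $R_f/R \cong M_1/N_1$. By Propositions \ref{MinimalRoot} and \ref{TestIdealMinimalRoot}, the minimal roots of $M_1$ and $N_1$ are $\tau(f^{1-\epsilon})$ and $\tau_f$ respectively. Questions about $\Length_{F-\hbox{{\tiny mod}}} R_f/R$ therefore translate into comparisons of root ideals and of $D$-module lengths, and the two substantive inputs are the $+s$ term in the first bound and the comparison to $\lambda^{\dim(R/fR)}_0$ in the second; the flag lower bound is essentially bookkeeping.

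For the first bound, the plan is to filter an $F^e$-composition series of $M_1/N_1$ according to whether the minimal root of each composition factor is proper or is all of $R$. A strictly ascending chain of submodules all of whose minimal roots are proper sits inside the ``test-ideal window'' $\tau_f\subseteq \cdot\subseteq \tau(f^{1-\epsilon})$, and a standard argument with Lyubeznik's root maps shows that strict inclusions of $F^e$-submodules force strict inclusions of minimal roots; hence these composition factors contribute at most $\Length_R(\tau(f^{1-\epsilon})/\tau_f)$. The harder task is to bound the remaining composition factors (those with minimal root equal to $R$) by $s$. For this, I would use the partial-fractions decomposition of $R_f$ coming from the cover of $D(f)$ by the principal opens $D(f_i)$: each $R_{f_i}/R$ embeds as an $F$-submodule of $R_f/R$, the $s$ corresponding classes together with the ``test-ideal part'' span $R_f/R$, and Theorem \ref{MainAlpha} applied to each irreducible $f_i$ bounds the number of simple $F^e$-composition factors thereby contributed by one per factor.

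For Part 2, the lower bound is obtained by assigning, to each $\tau_i$ in the flag, the $F^e$-submodule $W_i\subseteq R_f/R$ generated by the root inclusion $\tau_i\hookrightarrow F(\tau_i)$ afforded by the ideal $(\tau_i^{[p]}:\tau_i)$ in Lyubeznik's formalism. The hypotheses $\tau_i\subsetneq \tau_{i+1}$ together with $(\tau_i^{[p]}:\tau_i)\subseteq (\tau_{i+1}^{[p]}:\tau_{i+1})$ force $W_i\subsetneq W_{i+1}$, giving $\ell\leq \Length_{F-\hbox{{\tiny mod}}} R_f/R$. The upper bound uses the structural fact that every $F$-finite $F$-module carries a canonical $D$-module structure in which $F^e$-submodules are $D$-submodules, so $\Length_{F-\hbox{{\tiny mod}}} R_f/R\leq \Length_{D} R_f/R$. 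Combined with $R_f/R\cong \LC{1}{fR}{R}$, faithful flatness of $R\to \widehat R$, and the invariance of $D$-module length of local cohomology under completion (a standard ingredient in \cite{NuWi}), this is the desired bound by the very definition of $\lambda^{\dim(R/fR)}_0(R/fR;L)$.

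The principal obstacle I expect is making the $+s$ precise: one must argue cleanly that no more than $s$ composition factors of $R_f/R$ have full minimal root $R$, which genuinely requires the irreducible factorization of $f$ rather than just the test-ideal filtration. Once that combinatorial/structural step is in place, the remaining inequalities are routine applications of formalism already developed in the paper.
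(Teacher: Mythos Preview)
Your setup contains a misidentification that propagates into the first bound. The minimal root of $N_{1}$ is $\tau(f^{1})=fR$ by Proposition~\ref{TestIdealMinimalRoot}, not $\tau_{f}$; the ideal $\tau_{f}$ is the minimal root of $\Min_{F}(f)\subset R_{f}/R$ (Blickle, cited in Remark~\ref{CompareTestIdeals}), and $\Min_{F}(f)$ is not $N_{1}$. This is exactly the object you are missing for the $+s$ term. The paper (Proposition~\ref{UpperLengthFMod}) does not classify composition factors by ``full versus proper minimal root'' at all; it simply splits a maximal chain in $R_{f}/R$ at $\Min_{F}(f)$. Below $\Min_{F}(f)$ the length is exactly $s$, because $\Min_{F}(f)=\Min_{F}(f_{1})\oplus\cdots\oplus\Min_{F}(f_{s})$ is already a direct sum of simple $F$-modules (Proposition~\ref{MinFModFactors} and Remark~\ref{SumMin}). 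Above $\Min_{F}(f)$, every intermediate $F$-submodule $N$ has $N\cap \tau(f^{1-\epsilon})\tfrac{1}{f}$ as a root lying between $\tau_{f}\tfrac{1}{f}$ and $\tau(f^{1-\epsilon})\tfrac{1}{f}$, and distinct $F$-submodules give distinct such roots; hence that portion of the chain has length at most $\Length_{R}(\tau(f^{1-\epsilon})/\tau_{f})$. Your partial-fractions\slash Theorem~\ref{MainAlpha} plan for the $+s$ is unnecessary once $\Min_{F}(f)$ is used, and the ``obstacle'' you anticipate disappears.

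For the lower bound $\ell\le \Length_{F\text{-mod}}R_{f}/R$, your assertion that $\tau_{i}\subsetneq\tau_{i+1}$ together with $(\tau_{i}^{[p]}:\tau_{i})\subseteq(\tau_{i+1}^{[p]}:\tau_{i+1})$ ``forces'' $W_{i}\subsetneq W_{i+1}$ is the crux, and it is not automatic: two distinct ideals can generate the same $F$-submodule (Lemma~\ref{GeneratesSame}), and the Vassilev containment alone does not rule this out. The paper's Lemma~\ref{TestFMod} supplies the missing step, and $F$-purity is used essentially: if $W_{i}=W_{i+1}$ then $f^{p^{e}-1}\tau_{i+1}\subset\tau_{i}^{[p^{e}]}$ for some $e$; localizing at a minimal prime $Q$ of $\tau_{i}$ with $(\tau_{i+1})_{Q}=R_{Q}$ (such $Q$ exists since $\tau_{i+1}/\tau_{i}$ is the test ideal of the reduced ring $R/\tau_{i}$ and hence contains a nonzerodivisor) gives $f^{p^{e}-1}\in Q^{[p^{e}]}$, contradicting Fedder's criterion for $R/fR$. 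Your treatment of the upper bound by $\lambda_{0}^{\dim(R/fR)}$ is correct and matches Corollary~\ref{BoundLyuNum}.
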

Suppose that $R$ is local, $R/fR$ $F$-pure and $K$ perfect.
In this case, $\lambda^{\dim(R/fR)}(R/fR;L)=1$ if and only if $R/fR$ is $F$-regular \cite{NuWi,Manuel}.
This fact and Theorem \ref{MainBounds} say that $\lambda^{\dim(R/fR)}(R/fR;L)$ 
is  measuring ``how far'' an $F$-pure hypersurface is
from being $F$-regular.

\section{Preliminaries}\label{SecPre}

In this section, we review the basic facts that we will need about
tight closure, generalized test ideals, $D$-modules, and $F$-modules. 

\subsection{Tight closure}\label{SecTC}
We recall some definitions in tight closure introduced by Hochster and Huneke \cite{HoHu1, HoHu2},
and mention some properties of this theory \cite{Fedder,LyuKaren}. 
Throughout this section, we assume that $R$ is an $F$-finite ring.

We say that $R$ is \emph{$F$-pure} if for every $R$-module, $M$, 
the morphism induced by the inclusion of $R$ in $R^{1/p}$, $M\otimes_R R\to M\otimes_R R^{1/p}$,
is injective. If $R\to R^{1/p}$ splits, we say that $R$ is \emph{$F$-split.} 
These two properties, $F$-purity and $F$-splitting, are equivalent when $R$ is $F$-finite.
In addition, if $(R,m,K)$ is a regular local ring and $I\subset R$ is an ideal, then
$R/I$ is $F$-pure if and only if $(I^{[p]}:I) \not\subset m^{[p]}$ (Fedder's Criterion, \cite[Theorem $1.12$]{Fedder}).

If $I\subset R$ is an ideal, the \emph{tight closure} $I^*$ of $I$ is the ideal of $R$
consisting of all elements $z\in R$ for which there exists some $c \in R$
that is not in any minimal prime of $R,$ such that
$$
cz^q \in I^{[q]} \hbox{ for all }q = p^e \gg 0.
$$

We say that $R$ is \emph{weakly $F$-regular} if $I=I^*$ for every ideal.
If every localization of $R$ is weakly $F$-regular, we say the $R$ is \emph{$F$-regular}.
In general, tight closure does not commute with localization \cite{Monsky}, and it is unknown if the localization of 
a weakly $F$-regular ring is again a weakly $F$-regular ring. That is why the adjective weakly is used.

$R$ is \emph{strongly $F$-regular} if for all $c\in R$ not in any minimal prime,
there exists some $q=p^e$ such that the morphism of $R$-modules, $R \to R^{1/q}$, that sends $1$ to
$c^{1/q}$ splits. Strong $F$-regularity is preserved after localization.
In a Gorenstein ring, strong and
weak F-regularity are equivalent.

We define the \emph{$F$-singular locus} of $R$ by 
$$\Sing_F(R)=\{P\in \Spec (R)\mid R_P \hbox{ is not }F\hbox{-regular}\}.$$ 
We define \emph{the test ideal of $R$}  by
$$
\tau(R)= \bigcap_{I\subset R}(I:I^*).
$$ 

If $R$ is a Gorenstein ring, we have that
$$
\tau(R)= \bigcap_{I \hbox{ {\tiny parameter ideal}}}(I:I^*).
$$ 
 \cite[Theorem $8.23$]{HoHu1}  \cite[Theorem $18.1$]{Matsumura}.
  
\begin{Rem}\label{F-Sing Gorenstein}
Let $R$ be a reduced ring essentially of finite type over an excellent
local ring of prime characteristic. Let $\tau(R)$ denote the test ideal of $R$. 
We know that  for every multiplicative system $W\subset R$, $W^{-1}\tau(R)=\tau(W^{-1}R)$  \cite[Proposition $3.3$]{KarenParameter}  
 \cite[Theorem $2.3$]{LyuKaren}. It is worth pointing out that, in this case, $\tau (R)$ contains a nonzero-divisor \cite[Theorem 6.1]{HoHu2}.

If $P\in \Sing_F (R)$, then $R_P$ is not $F$-regular, and so $\tau(R_P)=\tau(R)R_P\neq R_P.$ Therefore, $\tau(R)\subset P$ and $P\in\cV(\tau(R)).$
On the other hand, if $P\in\cV(\tau(R))$, then $\tau(R_P)=\tau(R)R_P\neq R_P,$ and then $R$ is not $F$-regular. Hence, $P\in\Sing_F(R).$ 
Therefore, $\Sing_F(R)=\cV(\tau(R)).$
\end{Rem}

\subsection{Generalized test ideals}\label{SecGTI}
Test ideals were generalized by
Hara and Yoshida \cite{H-Y} in the context of pairs $(R,I^{c})$,
where $I$ is an ideal in $R$ and $c$ is a real parameter.
Blickle, Musta\c{t}\u{a}, and Smith \cite{BMS-MMJ} gave an elementary
description of these ideals in the case of a regular $F$-finite ring,
$R$. We give the definition introduced by them:

Given an ideal $I$ in $R$ we denote by $I^{[1/p^{e}]}$
the smallest ideal $J$ such that $I\subseteq J^{[p^{e}]}$  \cite[Definition $2.2$]{BMS-MMJ}.
The existence of the smallest such ideal is a consequence of the flatness
of the Frobenius map in the regular case. 

We recall some properties that we will use often
 $$(IJ)^{1/p^e}\subset I^{[1/p^e]} \cdot J^{[1/p^e]}$$ and 
$$\left(I^{[p^{e}]}\right)^{1/p^s}=I^{[p^{e}/p^s]}\subset \left(I^{[p^{s}]}\right)^{1/p^e}$$
\cite[Proposition $2.4$]{BMS-MMJ}. In addition, $\left(\left( f\right)^{[1/p^e]}\right)^{[p^e]}=D^{(e)}f$
\cite[Proposition $3.1$]{AMBL}, where $D^{(e)}=\Hom_{R^{p^e}}(R,R).$

Given a non-negative number, $c,$ and a nonzero ideal, $I$,
we define the \emph{generalized test ideal with exponent
$c$} by 
\[
\tau(I^{c})=\bigcup_{e>0}(I^{\lceil cp^{e}\rceil})^{[1/p^{e}]},
\]
where $\lceil c\rceil$ stands for the smallest integer greater than or equal to $c.$

The ideals in the union  above form an increasing chain of ideals;
therefore, they stabilize because $R$ is Noetherian. Hence for large
enough $e,$ $\tau(I^{c})=(I^{\lceil cp^{e}\rceil})^{[1/p^{e}]}.$
In particular, $\tau(f^{\frac{s}{p^e}})=\left(f^{s}\right)^{[1/p^e]}$
\cite[Lemma $2.1$]{BMS-Hyp}.

An important property of test ideals is given by Skoda's Theorem \cite[Theorem $2.25$]{BMS-MMJ}: if $I$ is generated by $s$ elements and $c\leq s,$
then $\tau(I^c)=I\cdot\tau(I^{c-1}).$ 

For every nonzero ideal $I$ and
every non-negative number $c$, there exists $\epsilon>0$ such that
$\tau(I^{c})=\tau(I^{c'})$ for every $c<c'<c+\epsilon$  \cite[Corollary $2.16$]{BMS-MMJ}.

A positive real number $c$ is an $\textit{F-jumping number}$ for
$I,$ if $\tau(I^{c})\neq\tau(I^{c-\epsilon})$ for
all $\epsilon>0.$

All $F$-jumping numbers of an ideal $I$ are rational, and they form a discrete set; 
that is, there are no accumulation points of
this set. In fact, they form a sequence with limit infinity  \cite[Theorem $3.1$]{BMS-MMJ}.
Then for every positive number $\alpha,$ 
there is a positive rational number,  $\beta<\alpha,$ such that
$\tau(f^\beta)=\tau(f^\gamma)$ for every $\gamma \in (\beta,\alpha).$
We denote $\tau(f^\beta)$ by $\tau(f^{\alpha-\epsilon}).$

\subsection{$D$-modules} \label{SecDMod}

Given two commutative rings, $A$ and $R,$ such that $A\subset R$,  
we define the ring of $A$-linear differential operators on $R,$ $D(R,A),$ as 
the subring of $\Hom_A(R,R),$ defined inductively as follows.  
The differential operators of order zero are induced by multiplication by elements in $R$ ($\Hom_R(R,R)=R$).   
An element $\theta \in \Hom_A(R,R)$ is a differential operator of 
order less than or equal to $k+1$ if $\theta\cdot r -r\cdot\theta$
is a differential operator of order less than or equal to $k$ for every $r\in R$. 

If $A,$ $B,$ and $R$ are commutative rings rings such that $A\subset B\subset R$, then
$D(R,B)\subset D(R,A)$.

\begin{Notation}
If $A=\ZZ$, we write $D_R$ for $D(R,\ZZ)$, and we write only $D$ if it is clear which ring we are working on.
\end{Notation}
If $M$ is a $D(R,A)$-module, then $M_f$ has a structure of a $D(R,A)$-module,  such that the natural
morphism $M\to M_f$ is a morphism of $D(R,A)$-modules. 
Thus,  $R_f/R$
is an $D(R,A)$-module.

If $R$ is a reduced $F$-finite ring,
we have that 
$D_R=\bigcup_{e\in\NN} \Hom_{R^{p^e}}(R,R)$  \cite{Ye}. We denote $\Hom_{R^{p^e}}(R,R)$ by $D^{(e)}_R.$
Moreover, if $R$ is an $F$-finite domain, then
$R$ is a strongly $F$-regular ring if and only if $R$ is $F$-split and a simple $D_R$-module \cite[Theorem $2.2$]{DModFSplit}.
 
If $R$ is an $F$-finite reduced ring, $W\subset R$ a multiplicative system and $M$ a simple $D_R$-module,
then $W^{-1}M$ is  either zero or a simple $D_{W^{-1}R}$-module. As a consequence, for every $D_R$-module of finite length, $N,$
$$\Length_{D_{W^{-1}R}} W^{-1}N\leq \Length_{D_R}N.$$ 
\subsection{$F^e$-modules}\label{SecFMod}

In this section, we recall some definitions and  properties of the Frobenius functor introduced by Peskine and Szpiro \cite{P-S}.
We assume that $R$ is regular. This allows us to use the theory of $F$-modules introduced by Lyubeznik \cite{LyuFMod}. 

Every morphism of rings $\varphi:R\to S$ defines a functor from $R$-modules to $S$-modules, where  $\varphi^* M=S\otimes_{R} M.$
If $S=R$ and $\varphi$ is the Frobenius morphism, $F M$ denotes $\varphi^* M.$ If $R$ is a regular ring, $F$ is an exact functor.
We denote the  $e$-th iterated  Frobenius functor by $F^e.$ 

\begin{Ex}
If $M$ is the cokernel of a matrix $(r_{i,j}),$ then $F^e M$ is the cokernel of $(r^{p^e}_{i,j}).$ In particular,
if $I\subset R$ is an ideal, then $F^e R/I=R/I^{[p^e]}.$
\end{Ex}

We say that an $R$-module, $\cM$, is an $F^e$-module if there exists an isomorphism of $R$-modules 
$\nu :\cM\to F^e \cM.$

If $M$ is an $R$-module and $\beta :M\to F^e M$ is a morphism of $R$-modules, we  consider
$$
\cM=\lim\limits_\to (M\FDer{\beta} F^e M\FDer{F\beta} F^{2e} M\FDer{F^2 \beta} \ldots).
$$
Then, $\cM$ is an $F^e$-module and  $\cM\FDer{\beta}\cM$ is the structure isomorphism. In this case, we say that $\cM$ is generated by $\beta :M\to F^e_R M$.
If $M$ is a finitely generated $R$-module, we say that $\cM$ is an \emph{$F^e$-finite $F^e$-module}.
If $\beta$ is an injective map, then  $M$ injects into $\cM$. In this case, 
we say that $\beta$ is a \emph{root morphism} and that $M$ is a \emph{root} for $\cM$.

\begin{Ex}\label{ExamplesFMod}
\begin{itemize}
\item[\rm{(i)}] Since $F^e R=R$, we have that $R$ is an $F^e$-module, where the structure morphism $\nu :R\to R$
is the identity.
\item[\rm{(ii)}] For every element $f\in R$ and  $r,e\in \NN$, we take $\alpha=\frac{r}{p^e-1}$ and define $M_{\alpha}$ as the 
$F^e$-finite $F^e$-module that is generated by
$$
R\FDer{f^{r}} R \FDer{f^{p^e r}} R\FDer{f^{p^{2e} r}} \ldots.
$$
\end{itemize}
\end{Ex}
\begin{Rem}
This structure of $M_\alpha$ as $F^e$-module depends of the representation $\frac{r}{p^e-1}.$ However, if we take
another representation $\frac{r'}{p^{e'}-1},$ both structures will induce the same structure as $F^{e\cdot e'}$-module.  
\end{Rem}

We say that $\phi:\cM\to \cN$ is a morphism of $F$-modules if the following diagram commutes:
$$
\xymatrix{
\cM \ar[r]^{\phi} \ar[d]^{\nu_{\cM} }   &   \cN  \ar[d]^{\nu_{\cN} } \\
F\cM  \ar[r]^{F_R\phi}&  F\cN
}
$$

The $F^e$-modules form an Abelian category, and the $F^e$-finite $F^e$-modules form a full Abelian subcategory. Moreover, if $\cM$ is  $F^e$-finite then
$\cM_f$ is also an $F^e$-finite $F^e$-module for every $f\in R$. In addition, 
if $R$ is a local ring, then
every $F^e$ finite $F^e$-module has finite length as $F^e$-module
 and has a minimal root \cite{Manuel,LyuFMod}.
\begin{Ex}
The localization map $R\to R_f $ is a morphism of $F$-modules for every $f\in R$.
\end{Ex}
\begin{Ex}
The quotient of localization map $R\to R_f $ is an $F_R$-finite $F_R$-module for every $f\in R$.
$R_f/R$ is generated by $R/fR\FDer{f^{p-1}} F_R (R/fR)=R/f^pR.$
\end{Ex}

We recall that every $F^e$-submodule $M\subset R_f/R$ is a $D$-module \cite[Examples $5.2$]{LyuFMod}. We have that $R_f/R$
has finite length as an $F$-module, because $R_f/R$ has finite length as a $D$-module. 
Let $R$ be an $F$-finite regular ring.
If $R_f/R$ has finite length as $D$-module, then 
$R_f/R$ has finite length as an $F$-module for every $f\in R$.
Therefore, if $R_f/R$ has finite length as a $D$-module, then
$R_f/R$ has finitely many $F$-submodules \cite{Mel}. 
\section{$F$-jumping ideals and $F$-jumping numbers}\label{SecFJacAlpha}
In this section we define $F$-jumpings ideals and give some basic
properties. In particular, we relate them with the generalized test
ideals and $F$-jumping numbers. 

\begin{Notation}
Throughout this section $R$ denotes an $F$-finite regular domain of characteristic $p>0$ and $\alpha$ 
denotes a rational number whose denominator is not divisible by $p$. 
This means that $\alpha$ has the form
$\frac{r}{p^e-1}$ for some $e.$ 
\end{Notation}

We note that $(p^{a\ell-1}+\ldots +p^e+1)r+\alpha=p^{e\ell}\alpha$ for every $\ell\in\NN.$
Let $R$  be a regular local $F$-finite ring. Then, 
$\phi(x^{1/p^{e}})\in I$ for
all $\phi\in\Hom(R^{1/p^{e}},R$) if and only if $x\in I^{[p^{e}]}.$

\begin{Lemma}\label{Mult p}
$\tau(f^{p\lambda})\subset \tau(f^\lambda)^{[p]}$
\end{Lemma}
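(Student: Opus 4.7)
The plan is to unpack both sides using the stabilized form of the generalized test ideal and then exploit the defining minimality property of the operation $(-)^{[1/p^e]}$.

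First I would fix an integer $e$ large enough so that the chains defining both $\tau(f^{p\lambda})$ and $\tau(f^{\lambda})$ have stabilized. Since the stabilization holds for all sufficiently large indices, I may also assume the index for $\tau(f^{\lambda})$ is $e+1$. This gives the clean identifications
$$\tau(f^{p\lambda}) = \bigl(f^{\lceil p\lambda p^{e}\rceil}\bigr)^{[1/p^{e}]} \qquad\text{and}\qquad \tau(f^{\lambda}) = \bigl(f^{\lceil \lambda p^{e+1}\rceil}\bigr)^{[1/p^{e+1}]}.$$
The key observation is that $\lceil p\lambda p^{e}\rceil = \lceil \lambda p^{e+1}\rceil$, so a common integer $s := \lceil \lambda p^{e+1}\rceil$ controls both sides.

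Next I would translate the problem into a statement about the single principal ideal $(f^{s})$. The claim reduces to
$$\bigl(f^{s}\bigr)^{[1/p^{e}]} \subset \Bigl(\bigl(f^{s}\bigr)^{[1/p^{e+1}]}\Bigr)^{[p]}.$$
Setting $J := (f^{s})^{[1/p^{e+1}]}$, the defining property of $(-)^{[1/p^{e+1}]}$ gives $f^{s} \in J^{[p^{e+1}]} = (J^{[p]})^{[p^{e}]}$. Now the minimality clause in the definition of $(f^{s})^{[1/p^{e}]}$, namely that it is the smallest ideal $K$ with $f^{s} \in K^{[p^{e}]}$, forces $(f^{s})^{[1/p^{e}]} \subset J^{[p]}$, which is exactly what is needed.

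There is no real obstacle here; the proof is essentially a tautological consequence of the universal property of $(-)^{[1/p^e]}$ combined with stabilization. The only mild subtlety to highlight carefully in the write-up is that both test ideals must be simultaneously represented with the ceiling exponent $s$, which relies on $\lceil p\lambda p^{e}\rceil = \lceil \lambda p^{e+1}\rceil$ — a triviality, but worth stating so the reader sees why the same $f^{s}$ appears on both sides.
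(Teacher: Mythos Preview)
Your proof is correct and follows essentially the same idea as the paper: both reduce to the containment $(f^{s})^{[1/p^{e}]}\subset\bigl((f^{s})^{[1/p^{e+1}]}\bigr)^{[p]}$, which is derived from the minimality property defining $(-)^{[1/p^{e}]}$. The only cosmetic difference is that you work at a single stabilized index and use the trivial equality $\lceil p\lambda p^{e}\rceil=\lceil \lambda p^{e+1}\rceil$, whereas the paper carries the full union over $j$ and justifies the exponent replacement via the inequality $\lceil p^{j+1}\lambda\rceil\le p\lceil p^{j}\lambda\rceil$ together with a limit argument.
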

\begin{proof}
We have that 
$\frac{\lceil p p^j \lambda\rceil}{p^{j}} \leq \frac{p\lceil p^j \lambda \rceil}{p^{j}} \hbox{ and }
\lim\limits_{j\to\infty}\frac{p\lceil p^j \lambda \rceil}{p^{j}}=p\lambda.$
Then,
$$
\tau(f^{p\lambda})
=\bigcup_{j\in\NN} \left( f^{p\lceil p^j \lambda \rceil}\right)^{[ 1/p^{j}]}
= \bigcup_{j\in\NN} \left(f^{\lceil p^j \lambda \rceil}\right)^{[ 1/p^{j-1}]}
\subset \bigcup_{j\in\NN}\left( \left( f^{\lceil p^j \lambda \rceil}\right)^{[ 1/p^{j}]}\right)^{[p]}
=\tau(f^{\lambda})^{[p]}
$$
by the properties of the ideals $I^{[1/p^j]}$ \cite[Lemma $2.4$]{BMS-MMJ}.
\end{proof}

We will study the $F^{e}$-module $M_{\alpha}=R_{f}e_{\alpha}$ introduced in by Blickle,  Musta\c{t}\u{a} and Smith \cite{BMS-Hyp}.
As an $R_f$-modules, $M_\alpha$ is free of rank one generated by $e_\alpha.$ Here $e_\alpha$ is thought formally as $1/f^{\alpha}.$ 
As an $F^{e}$-module $M_\alpha$ is generated by the morphism
\[
R\overset{f^{r}}{\longrightarrow}F^{e}R=R,
\]
as in Example \ref{ExamplesFMod}.
Moreover, the Frobenius morphism acts on $M_\alpha$ by
\[
F^{e}(\frac{b}{f^{m}}\cdot e_{\alpha})=\frac{b^{p^{e}}}{f^{mp^{e}+r}}\cdot e_{\alpha}.
\]
The structural isomorphism $v^{e}:R^{e}\otimes M_{\alpha}\to M_{\alpha}$ is given by 
\[
v^{e}(a\otimes\frac{b}{f^{m}})=\frac{ab^{p^{e}}}{f^{mp^{e}+r}}\cdot e_{\alpha}
\]
whose inverse morphism is 
\[
(v^{e})^{-1}(\frac{c}{f^{s}}\cdot e_{\alpha})= cf^{s(p^{e}-1)+r}\otimes\frac{1}{f^{s}}\cdot e_{\alpha}.
\]
$M_\alpha$ carries a natural structure as a $D_R$-module, 
which does not depend of the presentation of $\alpha$  \cite[Remark $2.4$]{BMS-Hyp}. 
Given $P\in D_{R}^{se},$ we take 
\[
P\cdot\frac{c}{f^{m}}=(v^{se})^{-1}(P\cdot v^{se}(\frac{c}{f^{m}})),
\]
where $P$ acts in $R^{se}\otimes M_{\alpha}$ by $P(a\otimes u)=P(a)\otimes u$.
We note that 
\[
v^{se}(a\otimes\frac{b}{f^{m}})=\frac{ab^{p^{es}}}{f^{mp^{es}+r(1+p^{e}+\ldots + p^{e(s-1)})}}\cdot e_{\alpha}
\]
and 
\[
(v^{se})^{-1}(\frac{c}{f^{m}})=cf^{(m(p^{e}-1)+r)(1+p^{e}+\ldots + p^{e(s-1)})}\otimes\frac{1}{f^{m}}\cdot e_{\alpha}
\]
\[
=cf^{m(p^{es}-1)+r(1+p^{e}+\ldots + p^{e(s-1)})}\otimes\frac{1}{f^{m}}\cdot e_{\alpha}
\]
thus 
\[
P\cdot\frac{c}{f^{m}}=\frac{P(cf^{m(p^{es}-1)+r(1+p^{e}+\ldots +p^{e(s-1)})})}{f^{mp^{es}+r(1+p^{e}+\ldots p^{e(s-1)})}}\cdot e_{\alpha}.
\]
When $m=0,$ the previous expression is equal to 
\[
P\cdot c = \frac{P(cf^{r(1+p^{e}+\ldots p^{e(s-1)})})}{f^{r(1+p^{e}+\ldots +p^{e(s-1)})}}\cdot e_{\alpha}.
\]
\begin{Def}\label{DefFJacAlpha}
We denote $D_R f^{\lceil\alpha\rceil}\cdot e_{\alpha}$ by $N_\alpha.$
We define the \emph{$F$-jumping ideal associated to $f$ and $\alpha$}  as the ideal $\mathfrak{J}_{F_R} ({f}^{\alpha})$ of $R$ such that
$$\mathfrak{J}_{F_R} ({f}^{\alpha}) e_\alpha  =N_\alpha \cap Re_{\alpha}.$$
If it clear in which ring we are working, we only write $\FJac{\alpha}{f}.$
\end{Def}

\begin{Lemma}\label{alpha+1}
The morphism $M_\alpha\to M_{\alpha+1}$ defined by sending $e_{\alpha}\mapsto f e_{\alpha+1} $ is an isomorphism of $F^e$-modules (as well as $D$-modules). 
In particular, $N_\alpha \cong  N_{\alpha+1}=D_R f^{\lceil\alpha+1\rceil}\cdot e_{\alpha+1}.$
\end{Lemma}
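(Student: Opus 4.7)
The plan is to define $\phi:M_\alpha\to M_{\alpha+1}$ as the unique $R_f$-linear extension of $e_\alpha\mapsto f\, e_{\alpha+1}$; explicitly $\phi(\tfrac{c}{f^m}e_\alpha)=\tfrac{c}{f^{m-1}}e_{\alpha+1}$. Since both $M_\alpha$ and $M_{\alpha+1}$ are free $R_f$-modules of rank one on their respective generators, $\phi$ is automatically an $R_f$-module isomorphism, with inverse $\tfrac{c}{f^m}e_{\alpha+1}\mapsto\tfrac{c}{f^{m+1}}e_\alpha$. The remaining content is to verify that $\phi$ intertwines the $F^e$-module and $D$-module structures, and then deduce the statement about $N_\alpha$.

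The key numerical observation is that if $\alpha=\tfrac{r}{p^e-1}$, then $\alpha+1=\tfrac{r'}{p^e-1}$ with $r'=r+p^e-1$. Using this, I will verify the $F^e$-module compatibility by comparing $\phi\circ v^e_\alpha$ with $v^e_{\alpha+1}\circ F^e(\phi)$ on an arbitrary element $a\otimes\tfrac{b}{f^m}$: the first gives $\tfrac{ab^{p^e}f}{f^{mp^e+r}}e_{\alpha+1}$, while the second produces $\tfrac{a(bf)^{p^e}}{f^{mp^e+r'}}e_{\alpha+1}=\tfrac{ab^{p^e}f^{p^e}}{f^{mp^e+r+p^e-1}}e_{\alpha+1}$, which simplifies to the same expression. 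This single computation shows $\phi$ is a morphism of $F^e$-modules, hence an isomorphism.

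For the $D$-module structure, one could appeal to the fact that $D_R=\bigcup_s D^{(es)}_R$ and that the action of $D^{(es)}$ on $M_\alpha$ is determined by the $F^{es}$-module structure (obtained by iterating that of $F^e$), so an $F^e$-morphism is automatically $D$-linear. Alternatively, a direct check using the formula recorded before Definition \ref{DefFJacAlpha} works: for $P\in D^{(es)}$, since $f^{p^{es}}\in R^{p^{es}}$ and $P$ is $R^{p^{es}}$-linear, the computation reduces (with $\sigma=1+p^e+\cdots+p^{e(s-1)}$ and $r'\sigma=r\sigma+p^{es}-1$) to pulling a factor of $f^{p^{es}}$ out of $P$ and cancelling it against the denominator, which produces exactly the factor of $f$ introduced by $\phi$. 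The main bookkeeping obstacle is this arithmetic with the exponents; the idea itself is straightforward.

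Finally, for the ``in particular'' assertion, since $\phi$ is a $D_R$-module isomorphism and $\lceil\alpha+1\rceil=\lceil\alpha\rceil+1$, we have
\[
\phi(N_\alpha)=\phi(D_R f^{\lceil\alpha\rceil}e_\alpha)=D_R\,\phi(f^{\lceil\alpha\rceil}e_\alpha)=D_R f^{\lceil\alpha\rceil+1}e_{\alpha+1}=D_R f^{\lceil\alpha+1\rceil}e_{\alpha+1}=N_{\alpha+1},
\]
which gives the stated isomorphism and description of $N_{\alpha+1}$.
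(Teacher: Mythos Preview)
Your proof is correct. The paper argues along a different but equally short route: it works at the level of generating morphisms, writing down the commutative diagram of directed systems in which the vertical maps $R\xrightarrow{f}R$, $R\xrightarrow{f^{p^e}}R$, $R\xrightarrow{f^{p^{2e}}}R,\ldots$ induce $\phi$ in the limit, and then observes that the cokernel system $R/fR\xrightarrow{f^{r+p^e-1}}R/f^{p^e}R\to\cdots$ has zero transition maps (because $r\geq 1$), hence zero limit; exactness of direct limits then forces $\phi$ to be an isomorphism of $F^e$-modules, and therefore of $D$-modules. Your approach instead verifies directly that $\phi$ intertwines the structure isomorphisms $v^e$, reducing everything to the single arithmetic identity $r'=r+p^e-1$. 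The paper's method stays entirely inside Lyubeznik's root framework and never touches the explicit formulas for $v^e$; your computation avoids the direct-limit bookkeeping and makes both the $D$-linearity and the ``in particular'' about $N_\alpha$ more explicit (the paper leaves the latter to the reader). Both arguments are valid and of comparable length.
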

\begin{proof}
We have the following commutative diagram of $R$-modules,
$$
\xymatrix{
0\ar[d] & 0\ar[d] && 0\ar[d]\\ 
R\ar[r]^{f^r} \ar[d]_{f} & R\ar[rr]^{f^{rp^e}} \ar[d]_{f^{p^e}}&& R\ar[rr]^{f^{rp^{2e}}} \ar[d]_{f^{p^{2e}}}&&\ldots\\
R\ar[r]^{f^{r+p^e-1}} \ar[d]& R\ar[rr]^{f^{(r+p^{e}-1)p^e}} \ar[d]&& R\ar[rr]^{f^{(r+p^{e}-1)p^{2e}}} \ar[d]&& \ldots \\
R/f \ar[r]^{f^{r+p^{e}-1}} \ar[d]& R/f^{p^e} \ar[rr]^{f^{(r+p^{e}-1)p^{e}}} \ar[d] && R/f^{p^{2e}} \ar[rr]^{f^{(r+p^{e}-1)p^{2e}}}\ar[d] &&\ldots\\
0 & 0&& 0\\
}
$$
We note that 
$$
R/f \FDer{f^{r+p^{e}-1}}  R/f^{p^e}
$$
is zero map because $r\geq 1$ as $\alpha>0.$
By taking the limits, we obtain that
$M_\alpha\FDer{f} M_{\alpha+1}$ is an isomorphism of $F$-modules, and hence of $D_R$-modules.
\end{proof}

\begin{Prop}
Let $\ell\in\NN$. Then,
$f^{\ell}\FJac{\alpha}{f}
\subset 
\FJac{\alpha+\ell}{f}.
$
\end{Prop}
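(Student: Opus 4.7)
The plan is to reduce the statement to the case $\ell=1$ and then iterate, using Lemma \ref{alpha+1} as the workhorse. That lemma already provides an isomorphism $\varphi : M_\alpha \to M_{\alpha+1}$ of $F^e$-modules (hence of $D_R$-modules) sending $e_\alpha \mapsto f e_{\alpha+1}$, and it observes that this isomorphism restricts to an isomorphism $N_\alpha \cong N_{\alpha+1}$. This is essentially all we need: definitions of the $F$-jumping ideal are intersections with the cyclic $R$-module generated by $e_\bullet$, so tracking how $\varphi$ moves these cyclic modules will give the containment.

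The key step is as follows. Suppose $u \in \FJac{\alpha}{f}$, so that $u \cdot e_\alpha \in N_\alpha \cap R e_\alpha$. Applying $\varphi$, and using that $\varphi$ is an $R$-module map with $\varphi(e_\alpha)=f e_{\alpha+1}$ and $\varphi(N_\alpha)=N_{\alpha+1}$, we obtain
\[
uf \cdot e_{\alpha+1} \;=\; \varphi(u e_\alpha) \;\in\; N_{\alpha+1} \cap f R\, e_{\alpha+1} \;\subset\; N_{\alpha+1} \cap R\, e_{\alpha+1} \;=\; \FJac{\alpha+1}{f}\cdot e_{\alpha+1},
\]
so $uf \in \FJac{\alpha+1}{f}$. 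Equivalently, $f\cdot \FJac{\alpha}{f}\subset \FJac{\alpha+1}{f}$.

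The general case follows by a trivial induction on $\ell$: from the base case applied to the exponent $\alpha+i$,
\[
f^{\ell}\FJac{\alpha}{f} \;\subset\; f^{\ell-1}\FJac{\alpha+1}{f} \;\subset\; \cdots \;\subset\; f\,\FJac{\alpha+\ell-1}{f} \;\subset\; \FJac{\alpha+\ell}{f}.
\]

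There is no real obstacle here; the only thing to double-check is that Lemma \ref{alpha+1}'s isomorphism really restricts to $N_\alpha \to N_{\alpha+1}$ (which it states explicitly) and that $\varphi$ is $R$-linear (automatic, since $F^e$-module morphisms are in particular $R$-linear). The mild subtlety is the arithmetic of ceilings, namely $\lceil \alpha+1\rceil = \lceil \alpha\rceil+1$, but this is immediate and is already encoded in the definition of $N_{\alpha+1}=D_R f^{\lceil\alpha+1\rceil}e_{\alpha+1}$ appearing in Lemma \ref{alpha+1}.
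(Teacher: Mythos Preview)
Your proof is correct and follows essentially the same approach as the paper: both use the isomorphism $M_\alpha\to M_{\alpha+1}$, $e_\alpha\mapsto fe_{\alpha+1}$, from Lemma~\ref{alpha+1} (which carries $N_\alpha$ onto $N_{\alpha+1}$), apply it to an element of $\FJac{\alpha}{f}e_\alpha = N_\alpha\cap Re_\alpha$, and then use the trivial containment $fRe_{\alpha+1}\subset Re_{\alpha+1}$. The only cosmetic difference is that the paper handles general $\ell$ in one step via the composed isomorphism $M_\alpha\to M_{\alpha+\ell}$, $e_\alpha\mapsto f^\ell e_{\alpha+\ell}$, whereas you prove the case $\ell=1$ and induct; these are equivalent.
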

\begin{proof}
Let $\phi:M_\alpha\to M_{\alpha+\ell}$ be the  morphism of $D_R$-modules given by $e_\alpha\mapsto f^\ell e_{\alpha+\ell}.$ We have that
$\phi$ is an isomorphism by Lemma \ref{alpha+1}.
Then,
$$
f^{\ell} \FJac{\alpha}{f} e_{\alpha+\ell}
= \FJac{\alpha}{f} f^{\ell}e_{\alpha+\ell}
=\phi(\FJac{\alpha}{f}e_\alpha)
$$
$$
=\phi(N_\alpha \cap Re_\alpha)
=\phi(N_\alpha)\cap \phi(R e_\alpha)
=  N_{\alpha+\ell}\cap R f^\ell e_{\alpha+\ell}
$$
$$
= N_{\alpha+\ell} \cap R f^\ell e_{\alpha+\ell}
\subset  N_{\alpha+\ell}\cap Re_{\alpha+\ell}
=\FJac{\alpha+\ell}{f}e_{\alpha+\ell}.
$$
Therefore, we obtain that $f^\ell \FJac{\alpha}{f}\subset \FJac{\alpha+\ell}{f}.$
\end{proof}
\begin{Rem}\label{DModGenIdeal}
Given any ideal $I\subseteq R,$ we consider the  $D$-module
generated by $I,$ $D_R\cdot Ie_{\alpha}=\bigcup_{s\geq1}D^{es}\cdot Ie_{\alpha}$. This
is equal to 
\[
\bigcup_{s\geq0}\left( D^{es}(f^{r(1+\ldots+p^{e(s-1)})}I )/f^{r(1+\ldots+p^{e(s-1)})}\right)\cdot e_{\alpha}.
\]
This $D_R$-module intersects with $Re_{\alpha}$ at the ideal 
\[
\bigcup_{s\geq0}\left( D^{es}(f^{r(1+\ldots+p^{e(s-1)})}I):f^{r(1+\ldots+p^{e(s-1)})}\right)
\]
and contains $I.$
\end{Rem}


We define inductively a sequence of ideals $\cI^j$ associated to $f^\alpha.$
We take $\cI^1=\tau(f^\alpha).$
Given $\cI^j,$ we take $\cI^{j+1} =((\cI^j)^{[p^e]}:f^r).$
We note that $f^r \cI^1=f^r\tau(f^\alpha)=\tau(f^{r+\alpha})=\tau(f^{p^e\alpha})\subset \tau(f^\alpha)^{[p^e]}=\cI^1(f^\alpha) ^{[p^e]},$
where the second equality follows from Skoda's Theorem and Lemma \ref{Mult p}.
We have that $\cI^1\subset \cI^2$ and $f^r\cI^2\subset (\cI^1)^{[p^e]}.$ 
Inductively, we obtain that  $\cI^j\subset \cI^{j+1}$ and $f^r \cI^{j+1}\subset (\cI^j)^{[p^e]}$
for every $j\in\NN.$

Since $R$ is Noetherian, there is an $N\in\NN$ such that $\cI^{N}=\cI^{N+1}.$
By our definition of $\cI^j,$ $\cI^{N}=\cI^{N+j}$ for every $j\geq N.$
\begin{Def}\label{DefFlag}
We call the previous sequence of ideals \emph{the $F$-flag of ideals associated to $f$ and $\alpha$}, and we denote the ideals by
$\cI^j_R(f^\alpha).$ If it is clear in which ring we are taking this flag, we write only $\cI^j(f^\alpha).$
\end{Def}

\begin{Rem}
By the properties of colon ideals and the flatness of Frobenius, we have that $$\cI^j(f^\alpha)=(\tau(f^\alpha)^{[p^{je}]}:f^{r(1+p^e+\ldots +p^{e(j-1)})}).$$
As a consequence, the $F$-flag of ideals associated to $f$ and $\alpha$ depends on the presentation of $\alpha=\frac{r}{p^e-1}$.
However, the union of this flag does not depend on this presentation as the next proposition shows.
\end{Rem}

\begin{Prop}\label{FJacLimitFlag}
$\FJac{\alpha}{f}=\bigcup_j\cI^j(f^\alpha)=\cI^N(f^\alpha)$ for $N\gg 0.$
\end{Prop}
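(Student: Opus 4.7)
The plan is to compute $N_\alpha \cap Re_\alpha$ directly from the description in Remark \ref{DModGenIdeal}. Setting $I = (f^{\lceil\alpha\rceil})$ (so that $N_\alpha = D_R \cdot I e_\alpha$) gives
\[
\FJac{\alpha}{f} = \bigcup_{s \geq 0}\bigl( D^{es}(f^{r(1+p^e+\ldots +p^{e(s-1)})+\lceil\alpha\rceil}) : f^{r(1+p^e+\ldots + p^{e(s-1)})}\bigr).
\]
The ideal appearing inside $D^{es}$ is principal, so the formula $D^{(e)}(gR) = ((gR)^{[1/p^{e}]})^{[p^{e}]}$ from \cite{AMBL} applies and converts each $D^{es}$-term into a $p^{es}$-th root ideal raised to the $p^{es}$ power.

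The next step is to recognize the exponent as a rounded multiple of $\alpha$. The relation $(p^{e(s-1)}+\ldots +p^e+1)r+\alpha = p^{es}\alpha$ noted at the start of the section gives the integer identity $r(1+p^e+\ldots+p^{e(s-1)})+\lceil\alpha\rceil=\lceil p^{es}\alpha\rceil$, so
\[
D^{es}\bigl(f^{r(1+p^e+\ldots+p^{e(s-1)})+\lceil\alpha\rceil}\bigr)=\bigl((f^{\lceil p^{es}\alpha\rceil})^{[1/p^{es}]}\bigr)^{[p^{es}]}.
\]
By the definition of the generalized test ideal recalled in Section \ref{SecGTI}, the chain of ideals $(f^{\lceil p^{es}\alpha\rceil})^{[1/p^{es}]}$ is ascending in $s$ and stabilizes to $\tau(f^\alpha)$. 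Hence, for $s \gg 0$, this $D^{es}$-term equals $\tau(f^\alpha)^{[p^{es}]}$, and by the closed form for the flag recalled in the Remark preceding the Proposition, the $s$-th term of the union becomes
\[
\bigl(\tau(f^\alpha)^{[p^{es}]} : f^{r(1+p^e+\ldots+p^{e(s-1)})}\bigr) = \cI^s(f^\alpha).
\]

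Since both unions are ascending chains of ideals of a Noetherian ring, each stabilizes; since their tails agree termwise, the stable values coincide, giving $\FJac{\alpha}{f}=\bigcup_s \cI^s(f^\alpha)=\cI^N(f^\alpha)$ for $N\gg 0$. The delicate step I expect to be the main obstacle is controlling the pre-stabilization terms where $(f^{\lceil p^{es}\alpha\rceil})^{[1/p^{es}]}$ is only properly contained in $\tau(f^\alpha)$: one has to check that these smaller terms do not contribute beyond the eventual stable value, which follows from monotonicity of both chains together with the fact that the $D^{es}$-terms are dominated by the corresponding $\cI^s(f^\alpha)$ for every $s$.
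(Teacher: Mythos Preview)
Your proposal is correct and follows essentially the same route as the paper: apply Remark~\ref{DModGenIdeal} with $I=(f^{\lceil\alpha\rceil})$, use the exponent identity $r(1+p^e+\cdots+p^{e(s-1)})+\lceil\alpha\rceil=\lceil p^{es}\alpha\rceil$, rewrite $D^{es}(f^{\lceil p^{es}\alpha\rceil})$ as $\bigl((f^{\lceil p^{es}\alpha\rceil})^{[1/p^{es}]}\bigr)^{[p^{es}]}$, and observe that for $s\gg 0$ this equals $\tau(f^\alpha)^{[p^{es}]}$, whence the $s$-th term is $\cI^s(f^\alpha)$ by the closed form in the preceding Remark. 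The ``delicate step'' you flag is not actually an obstacle: the chain coming from Remark~\ref{DModGenIdeal} is ascending (because $D^{es}\subset D^{e(s+1)}$), so once the tails agree termwise with the ascending chain $\{\cI^s(f^\alpha)\}$ the unions coincide---no separate domination argument is needed.
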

\begin{proof}
By the definition of $\tau(f^\alpha)$ and properties of test ideals in Section \ref{SecGTI},
$\tau(f^{\alpha})^{[p^{se}]}=\left( \left(f^{\lceil p^e\alpha\rceil }\right)^{[1/p^{se}]}\right) ^{[p^{se}]}=D^{se}(f^{\lceil p^{se}\alpha\rceil})$
for $s\geq N$ for a positive integer $N\in\NN.$ 

Then, by Remark \ref{DModGenIdeal},
\begin{align*}
\FJac{\alpha}{f} e_\alpha & =N_\alpha \bigcap Re_\alpha\\
&=D_R f^{\lceil \alpha \rceil}\cdot e_\alpha \bigcap R e_{\alpha}\\
&=\bigcup_{s\geq 0}\left( D^{es}(f^{r(1+\ldots+p^{e(s-1)})}f^{\lceil\alpha\rceil}):f^{r(1+\ldots+p^{e(s-1)})}\right) e_\alpha\\
&=\bigcup_{s\geq N}\left( D^{es}(f^{r(1+\ldots+p^{e(s-1)})}f^{\lceil\alpha\rceil}):f^{r(1+\ldots+p^{e(s-1)})}\right)e_\alpha\\
&=\bigcup_{s\geq N}\left( D^{es}(f^{r(1+\ldots+p^{e(s-1)})+\alpha\rceil}):f^{r(1+\ldots+p^{e(s-1)})}\right)e_\alpha\\
&=\bigcup_{s\geq N}\left( D^{es}(f^{p^{es}\lceil\alpha\rceil}):f^{r(1+\ldots+p^{e(s-1)})}\right)e_\alpha\\
&=\bigcup_{s\geq N}\left( \tau(f^{\alpha})^{[p^{se}]} :f^{r(1+\ldots+p^{e(s-1)})}\right)e_\alpha\\
&=\bigcup_{s\geq N} \cI^s (f^\alpha)e_\alpha.
\end{align*}
\end{proof}

\begin{Prop}\label{FJacAlphaLoc}
Let $W\subset R$ be a multiplicative system. Then, 
$$\mathfrak{J}_{F_{W^{-1}R}} (f^\alpha)=W^{-1}  \FJac{\alpha}{f}$$
\end{Prop}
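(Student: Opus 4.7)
The plan is to leverage the characterization of $\FJac{\alpha}{f}$ via the $F$-flag of ideals proved in Proposition \ref{FJacLimitFlag}, rather than trying to track the $D$-module $N_\alpha$ directly through localization. Since $\FJac{\alpha}{f} = \bigcup_j \cI^j(f^\alpha)$ and the flag is built recursively from $\tau(f^\alpha)$ by iterated Frobenius brackets and colons by $f^r$, I would establish that each of these three building blocks commutes with localization and then pass to the union.

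First, I would verify the base case $\cI^1_{W^{-1}R}(f^\alpha) = W^{-1}\cI^1_R(f^\alpha)$. By definition $\cI^1 = \tau(f^\alpha)$, and generalized test ideals on $F$-finite regular rings commute with localization; this follows directly from the description $\tau(f^\alpha) = (f^{\lceil c p^e \rceil})^{[1/p^e]}$ for $e \gg 0$ together with the fact that the operation $J \mapsto J^{[1/p^e]}$ commutes with localization (since the Frobenius map is flat and the smallest ideal $J$ containing a finitely generated ideal $I$ with $I \subset J^{[p^e]}$ is characterized by a finite condition that survives localization). Alternatively, one can invoke the compatibility of test ideals with localization recalled in Remark \ref{F-Sing Gorenstein}.

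Next, I would carry out the inductive step: if $\cI^j_{W^{-1}R}(f^\alpha) = W^{-1} \cI^j_R(f^\alpha)$, then since Frobenius brackets commute with localization (that is, $(W^{-1}I)^{[p^e]} = W^{-1}(I^{[p^e]})$), and since colon ideals commute with localization against a finitely generated ideal ($(W^{-1}J : W^{-1}(f^r)) = W^{-1}(J : f^r)$), we obtain
\[
\cI^{j+1}_{W^{-1}R}(f^\alpha) = \bigl((W^{-1}\cI^j_R)^{[p^e]} : f^r\bigr) = W^{-1}\bigl((\cI^j_R)^{[p^e]} : f^r\bigr) = W^{-1}\cI^{j+1}_R(f^\alpha).
\]
Induction gives the equality at every level.

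Finally, I would stabilize: by Proposition \ref{FJacLimitFlag}, the ascending chain $\{\cI^j_R(f^\alpha)\}$ stabilizes at some $N$ with $\cI^N_R = \FJac{\alpha}{f}$, and likewise the chain $\{\cI^j_{W^{-1}R}(f^\alpha)\}$ stabilizes to $\mathfrak{J}_{F_{W^{-1}R}}(f^\alpha)$. Since $W^{-1}$ is exact and commutes with unions, applying it to the stabilized equality $\cI^N_R = \FJac{\alpha}{f}$ and comparing with the analogous equality over $W^{-1}R$ yields the claim. The only real subtlety is the localization statement for $\tau(f^\alpha)$, which I would either cite or prove via the explicit formula; the rest is formal manipulation of Frobenius brackets and colons.
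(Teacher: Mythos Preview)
Your proposal is correct and follows essentially the same approach as the paper: both arguments reduce to Proposition~\ref{FJacLimitFlag}, establish $W^{-1}\cI^j_R(f^\alpha)=\cI^j_{W^{-1}R}(f^\alpha)$ by induction (base case via localization of generalized test ideals, inductive step via flatness of Frobenius and compatibility of colon ideals with localization), and then conclude by stabilization. One small note: the alternative you mention, Remark~\ref{F-Sing Gorenstein}, concerns the classical test ideal $\tau(R)$ rather than $\tau(f^\alpha)$, so it is not the right citation here; the paper instead cites \cite[Proposition~2.13]{BMS-MMJ}, which is exactly the localization statement for generalized test ideals that you describe via the explicit formula.
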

\begin{proof}
There exists $N$ such that 
$$\mathfrak{J}_{F_R} (f^\alpha)= \cI^N_R (f^{p^e\alpha})=
\cI^{N+1}_{R}(f^{p^e\alpha}).
$$

We have that $\cI^1_R(f^{p^e\alpha})=\tau(f^\alpha)$
and that $\cI^{j+1}_R (f^{p^e\alpha})=(\cI^j_R(f^{p^e\alpha})^{[p^e]}:f^r).$
We have that $W^{-1}\cI^1_R(f^{p^e\alpha})=W^{-1}\tau(f^\alpha)=\tau(f^\alpha W^{-1}R)=\cI^1_{W^{-1}R}(f^{p^e\alpha})$
\cite[Proposition $2.13$]{BMS-MMJ}, 
Thus, 
$W^{-1}\cI^j_R(f^{p^e\alpha})=\cI^j_{W^{-1}R}(f^{p^e\alpha})$
because $ -\otimes_R W^{-1}R$ is a flat functor.
Since  
$\cI^N_{W^{-1}R} (f^{p^e\alpha})=
\cI^{N+1}_{W^{-1}R}(f^{p^e\alpha}),$
we have that 
$$\mathfrak{J}_{F_{W^{-1}R}}(f^{\alpha})=\cI^N_{W^{-1}R} (f^{p^e\alpha})=W^{-1}\mathfrak{J}_{F_{R}} (f^\alpha).$$
\end{proof}
\begin{Prop}\label{FJacAlphaComplete}
Suppose that $R$ is a local ring. Then
$$\mathfrak{J}_{F_{\widehat{R}}} (f^\alpha)=\mathfrak{J}_{F_{R}} (f^\alpha)\widehat{R},$$
where $\widehat{R}$ denotes the completion of $R$ with respect to the maximal ideal.
\end{Prop}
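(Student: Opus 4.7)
The plan is to mirror the proof of Proposition \ref{FJacAlphaLoc}, replacing localization by completion and using the fact that $\widehat{R}$ is a flat $R$-algebra. By Proposition \ref{FJacLimitFlag}, it suffices to show that the entire $F$-flag commutes with completion, i.e.\ $\cI^j_R(f^\alpha)\widehat{R}=\cI^j_{\widehat{R}}(f^\alpha)$ for every $j$, because then stability at some $N\gg 0$ in $R$ transfers to stability in $\widehat{R}$ and conversely, giving
$$
\mathfrak{J}_{F_{\widehat{R}}}(f^\alpha)=\cI^N_{\widehat{R}}(f^\alpha)=\cI^N_R(f^\alpha)\widehat{R}=\mathfrak{J}_{F_R}(f^\alpha)\widehat{R}.
$$

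The argument is by induction on $j$. The base case $\cI^1_R(f^\alpha)=\tau(f^\alpha)$ requires that generalized test ideals commute with completion for $F$-finite regular local rings: $\tau(f^\alpha)\widehat{R}=\tau((f\widehat{R})^\alpha)$. Using the elementary description of \cite{BMS-MMJ}, this follows from the identity $(f^{\lceil p^e\alpha\rceil})^{[1/p^e]}\widehat{R}=(f^{\lceil p^e\alpha\rceil}\widehat{R})^{[1/p^e]}$, which in turn reduces to the statement that the operation $I\mapsto I^{[1/p^e]}$ commutes with the flat $F$-finite extension $R\to\widehat{R}$; this is a standard consequence of the flatness of Frobenius together with the fact that $\widehat{R}$ is $F$-finite and faithfully flat over $R$.

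For the inductive step, assume $\cI^j_R(f^\alpha)\widehat{R}=\cI^j_{\widehat{R}}(f^\alpha)$. Then
$$
\cI^{j+1}_R(f^\alpha)\widehat{R}=\bigl((\cI^j_R(f^\alpha))^{[p^e]}:f^r\bigr)\widehat{R}
=\bigl((\cI^j_R(f^\alpha)\widehat{R})^{[p^e]}:f^r\bigr)
=\bigl((\cI^j_{\widehat{R}}(f^\alpha))^{[p^e]}:f^r\bigr)
=\cI^{j+1}_{\widehat{R}}(f^\alpha),
$$
where the first equality uses flatness of $\widehat{R}$ over $R$ to commute the colon by the principal (hence finitely generated) ideal $(f^r)$ with base change, the second uses flatness of Frobenius and of completion to commute bracket powers with extension, and the third uses the inductive hypothesis.

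The main obstacle is the base case: the claim that generalized test ideals descend to and ascend from the completion. Once this is granted (and in the $F$-finite local setting it is, essentially because the defining operations are preserved by faithfully flat $F$-finite extensions), the rest is formal manipulation with colons and Frobenius powers under a flat map, exactly as in Proposition \ref{FJacAlphaLoc}.
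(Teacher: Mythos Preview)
Your proposal is correct and follows essentially the same approach as the paper: both argue by induction on $j$ that $\cI^j_R(f^\alpha)\widehat{R}=\cI^j_{\widehat{R}}(f^\alpha)$, with the base case reducing to $\tau(f^\alpha)\widehat{R}=\tau((f\widehat{R})^\alpha)$ (the paper cites \cite[Proposition~2.13]{BMS-MMJ} for this) and the inductive step using flatness of $R\to\widehat{R}$ to commute colon ideals and bracket powers with extension. The paper is terser, simply noting that ``the rest is analogous to the proof of Proposition~\ref{FJacAlphaLoc},'' while you spell out the inductive step explicitly.
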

\begin{proof}
We have that $\cI^1_R(f^{p^e\alpha})=\tau(f^\alpha)$
and that $\cI^{j+1}_R (f^{p^e\alpha})=(\cI^j_R(f^{p^e\alpha})^{[p^e]}:f^r)$  \cite[Proposition $2.13$]{BMS-MMJ}.
In addition,    
$$\cI^1_R(f^{p^e\alpha})\widehat{R}=\tau(f^\alpha)\widehat{R}=\tau(f^\alpha \widehat{R} ) \widehat{R}=\cI^1_{\widehat{R}}(f^{p^e\alpha})$$
\cite[Proposition $2.13$]{BMS-MMJ}.
Thus, 
$\cI^j_R(f^{p^e\alpha})\widehat{R}=\cI^j_{\widehat{R}}(f^{p^e\alpha})$
because $ -\otimes_R \widehat{R}$ is a flat functor.
The rest is analogous to the proof of Proposition \ref{FJacAlphaLoc}.
\end{proof}

Every $F^e$-submodule of $M_\alpha$ is given by an ideal $I\subset R$ such that
$I\FDer{f^r}I^{[p^e]}$ makes sense \cite[Corollary $2.6$]{LyuFMod}. This is $f^rI\subset I^{[p^e]}.$ We set
$$
N_I=\lim\limits_{\to} \left( I\FDer{f^r}I^{[p^e]}\FDer{f^{p^er}}I^{[p^{2e}]}\FDer{f^{p^{2e}r}}\ldots\right).
$$

\begin{Lemma}\label{GeneratesSame}
Let $I,J\subset R$ be ideals such that $f^r I\subset I^{[p^e]},$ $f^r J\subset J^{[p^e]}$
and $I\subset J.$ 
Then the $F^e$-submodule $M_\alpha$ generated by  $I$ is equal to the one generated by $J$ if and only  if 
there exits $\ell\in \NN$ such that $f^{r(1 +\ldots + p^{e(\ell-1)})} J\subset I^{[p^{e\ell}]}.$
\end{Lemma}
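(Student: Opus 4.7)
The plan is to reduce the statement $N_I = N_J$ to a condition about a single generating level and then to translate that condition into the divisibility statement in the conclusion. Since $I\subset J$ gives $I^{[p^{es}]}\subset J^{[p^{es}]}$ at every stage, the inclusion $N_I\subset N_J$ is automatic, so the only content is when $N_J\subset N_I$. I would begin by writing down the explicit picture of $N_I$ inside $M_\alpha=R_f\cdot e_\alpha$. The $s$-th term of the directed system defining $N_I$ embeds into $M_\alpha$ by $a\mapsto a/f^{r(1+p^e+\ldots +p^{e(s-1)})}\cdot e_\alpha$, so
$$N_I=\bigcup_{s\geq 0}\frac{I^{[p^{es}]}}{f^{r(1+p^e+\ldots+p^{e(s-1)})}}\cdot e_\alpha,$$
and similarly for $N_J$. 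Because $N_I$ is an $F^e$-submodule and $N_J$ is generated by its level-zero piece $J\cdot e_\alpha$, the condition $N_J\subset N_I$ is equivalent to $J\cdot e_\alpha\subset N_I$.

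Next I would unpack $J\cdot e_\alpha\subset N_I$ element by element. Writing $\sigma_s=r(1+p^e+\ldots+p^{e(s-1)})$, the condition that $b\cdot e_\alpha$ lies in the $s$-th piece of $N_I$ becomes, using that $M_\alpha$ is free of rank one over $R_f$ on $e_\alpha$, exactly $f^{\sigma_s}b\in I^{[p^{es}]}$. The key technical observation is that once this holds for some $s$ it persists for every $s'\geq s$: using $f^r I\subset I^{[p^e]}$,
$$f^{\sigma_{s+1}}b=f^{rp^{es}}\cdot f^{\sigma_s}b\in f^{rp^{es}}I^{[p^{es}]}=(f^r I)^{[p^{es}]}\subset (I^{[p^e]})^{[p^{es}]}=I^{[p^{e(s+1)}]},$$
and one iterates.

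Finally I would use Noetherianness to pass from a level $s(b)$ depending on $b$ to a uniform $\ell$. Since $J$ is finitely generated by some $b_1,\ldots,b_m$, taking $\ell=\max_i s(b_i)$ and applying the monotonicity step to each generator yields $f^{\sigma_\ell}b_i\in I^{[p^{e\ell}]}$ for all $i$, hence $f^{\sigma_\ell}J\subset I^{[p^{e\ell}]}$. Conversely, if such an $\ell$ exists, then for every $b\in J$ the identity
$$b\cdot e_\alpha=\frac{f^{\sigma_\ell}b}{f^{\sigma_\ell}}\cdot e_\alpha$$
exhibits $b\cdot e_\alpha$ as an element of the $\ell$-th piece of $N_I$, which gives $J\cdot e_\alpha\subset N_I$ and hence $N_J\subset N_I$. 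The main obstacle is the compatibility of the pieces of $N_I$ under raising $s$, and it is exactly there that the hypothesis $f^r I\subset I^{[p^e]}$ is used; without it one would only obtain a pointwise statement and could not pass to a single $\ell$.
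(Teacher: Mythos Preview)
Your proof is correct and follows essentially the same idea as the paper's: both reduce $N_I=N_J$ to the existence of an $\ell$ with $f^{r(1+\ldots+p^{e(\ell-1)})}J\subset I^{[p^{e\ell}]}$, using finite generation to pass from an elementwise statement to a uniform one. The only difference is packaging: the paper forms the quotient $N_J/N_I$, notes that it is generated by $J/I\xrightarrow{f^r}J^{[p^e]}/I^{[p^e]}$, and uses that a direct limit with finitely generated initial term vanishes iff some composite transition map is zero, whereas you work directly inside $M_\alpha$ via the explicit union description and prove the monotonicity of the filtration by hand.
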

\begin{proof}
Let $N_I$ and $N_J$ be the $F^e$-submodules of $M_\alpha$ generated by $I$ and $J$ respectively.
We have that $J/I\FDer{f^r}J^{[p^e]}/I^{[p^e]}$ generates the $F^e$-module $N_J/N_I.$
Since $J/I$ is a finitely generated $R$-module, $N_J=N_I$ if and only if there exist
there exists $\ell$ such that $$J/I\FDer{f^{r(1+\ldots + p^{e(\ell-1)})}}J^{[p^{e\ell }]}/I^{[p^{e\ell }]}$$ is the zero morphism.
Therefore, $N_I=N_J$ if and only if there exits $\ell\in \NN$ such that $f^{r(1 +\ldots + p^{e(\ell-1)})} J\subset I^{[p^e]}.$
\end{proof}

\begin{Prop}\label{AlphaFMod}
$D_R f^{\lceil\alpha\rceil}\cdot e_{\alpha}$ is an $F^e$-submodule of $M_\alpha.$ Moreover,
$\FJac{\alpha}{f}\FDer{f^r} \FJac{\alpha}{f}^{[p^e]}$ and
$\tau(f^\alpha)\FDer{f^r} \tau(f^\alpha)^{[p^e]}$ generate
$D_R f^{\lceil\alpha\rceil}\cdot e_{\alpha}.$ 
\end{Prop}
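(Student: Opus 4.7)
The plan is to verify both Frobenius compatibilities, identify the $F^e$-submodule of $M_\alpha$ generated by the root morphism $\tau(f^\alpha) \FDer{f^r} \tau(f^\alpha)^{[p^e]}$ with $N_\alpha = D_R f^{\lceil\alpha\rceil}\cdot e_\alpha$, and then transfer this identification to $\FJac{\alpha}{f}$ via Lemma \ref{GeneratesSame}. The compatibility $f^r \tau(f^\alpha) \subset \tau(f^\alpha)^{[p^e]}$ is exactly the computation $f^r \tau(f^\alpha) = \tau(f^{p^e\alpha}) \subset \tau(f^\alpha)^{[p^e]}$ recorded in the paragraph preceding Definition \ref{DefFlag} (using Skoda's Theorem and Lemma \ref{Mult p}). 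For $\FJac{\alpha}{f} = \cI^N(f^\alpha)$, $N \gg 0$, by Proposition \ref{FJacLimitFlag}, the inclusion $f^r \cI^N \subset (\cI^{N-1})^{[p^e]} \subset (\cI^N)^{[p^e]}$ is built into the definition of the flag together with the ordering $\cI^{N-1} \subset \cI^N$.

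The heart of the argument is identifying the $F^e$-submodule generated by $\tau(f^\alpha) \FDer{f^r} \tau(f^\alpha)^{[p^e]}$ with $N_\alpha$. From the identity $p^{e\ell}\alpha = r(1 + p^e + \ldots + p^{e(\ell-1)}) + \alpha$ noted at the opening of the section, I obtain $\lceil p^{es}\alpha\rceil = r(1 + p^e + \ldots + p^{e(s-1)}) + \lceil\alpha\rceil$. For $s$ large enough, the characterizations in Section \ref{SecGTI} give $\tau(f^\alpha)^{[p^{es}]} = D^{es}(f^{\lceil p^{es}\alpha\rceil}) = D^{es}(f^{r(1+\ldots+p^{e(s-1)})} f^{\lceil\alpha\rceil})$. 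Substituting this into the expression for $N_\alpha$ in Remark \ref{DModGenIdeal}, I find that $N_\alpha$ is exactly the ascending union $\bigcup_{s \geq N} \tau(f^\alpha)^{[p^{es}]} / f^{r(1 + \ldots + p^{e(s-1)})} \cdot e_\alpha$ realized inside $M_\alpha$, which is precisely the image of the direct limit generated by $\tau(f^\alpha) \FDer{f^r} \tau(f^\alpha)^{[p^e]}$. This simultaneously establishes that $N_\alpha$ is an $F^e$-submodule of $M_\alpha$ and that it is generated by $\tau(f^\alpha) \FDer{f^r} \tau(f^\alpha)^{[p^e]}$.

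For $\FJac{\alpha}{f}$, I apply Lemma \ref{GeneratesSame} with $I = \tau(f^\alpha) \subset J = \FJac{\alpha}{f}$, which reduces the assertion to producing an $\ell$ with $f^{r(1 + p^e + \ldots + p^{e(\ell-1)})} \FJac{\alpha}{f} \subset \tau(f^\alpha)^{[p^{e\ell}]}$. I proceed by induction on $\ell$: starting from $f^r \cI^1 \subset \tau(f^\alpha)^{[p^e]}$ and using $f^r \cI^{\ell+1} \subset (\cI^\ell)^{[p^e]}$, I bracket the inductive hypothesis $f^{r(1+\ldots+p^{e(\ell-1)})} \cI^\ell \subset \tau(f^\alpha)^{[p^{e\ell}]}$ by $[p^e]$ and combine it with the previous inclusion to get $f^{r(1 + p^e + \ldots + p^{e\ell})} \cI^{\ell+1} \subset \tau(f^\alpha)^{[p^{e(\ell+1)}]}$. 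Taking $\ell = N$ supplies the hypothesis of Lemma \ref{GeneratesSame}. The main obstacle is the exponent bookkeeping in the middle step: aligning the denominators $f^{r(1+\ldots + p^{e(s-1)})}$ arising in Remark \ref{DModGenIdeal} with the Frobenius powers appearing in the $F^e$-direct limit, everything ultimately resting on the identity $p^{e\ell}\alpha = r(1 + \ldots + p^{e(\ell-1)}) + \alpha$ and on the stabilization $\tau(f^\alpha)^{[p^{es}]} = D^{es}(f^{\lceil p^{es}\alpha\rceil})$ for large $s$.
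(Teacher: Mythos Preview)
Your proof is correct and follows essentially the same route as the paper: you establish the Frobenius compatibility for $\tau(f^\alpha)$ via Skoda and Lemma~\ref{Mult p}, identify $N_\alpha$ with $\bigcup_{s\geq N}\tau(f^\alpha)^{[p^{es}]}/f^{r(1+\ldots+p^{e(s-1)})}\cdot e_\alpha$ through the identity $\lceil p^{es}\alpha\rceil = r(1+\ldots+p^{e(s-1)})+\lceil\alpha\rceil$ and the stabilization $\tau(f^\alpha)^{[p^{es}]}=D^{es}(f^{\lceil p^{es}\alpha\rceil})$, and then invoke Lemma~\ref{GeneratesSame}. The only cosmetic difference is that the paper handles the $\FJac{\alpha}{f}$ step by chaining together one-step comparisons $\cI^s$ versus $\cI^{s+1}$ (using that $\cI^{s+1}/\cI^s$ is the kernel of $R/\cI^s\FDer{f^r}R/(\cI^s)^{[p^e]}$), whereas you compare $\tau(f^\alpha)$ directly to $\cI^N$ via your induction; both amount to the formula $\cI^j=(\tau(f^\alpha)^{[p^{je}]}:f^{r(1+\ldots+p^{e(j-1)})})$ already recorded in the remark after Definition~\ref{DefFlag}.
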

\begin{proof}
By the definition of $\tau(f^\alpha)$ and properties of test ideals in Section \ref{SecGTI},
$\tau(f^{\alpha})^{[p^{se}]}=\left( \left(f^{\lceil p^e\alpha\rceil }\right)^{[1/p^{se}]}\right) ^{[p^{se}]}=D^{se}(f^{\lceil p^{se}\alpha\rceil})$
for $s\geq N$ for an integer $N\in\NN.$ 
Then,
\begin{align*}
D_R\cdot f^{\lceil\alpha\rceil} e_{\alpha}&=\bigcup_{s\geq 0}D^{es}\cdot f^{\lceil\alpha\rceil}e_{\alpha}\\
&=\bigcup_{s\geq N}D^{es}\cdot f^{\lceil\alpha\rceil}e_{\alpha}\\
&=\bigcup_{s\geq N}\left( D^{es}(f^{\lceil r(1+\ldots+p^{e(s-1)})}f^{\lceil\alpha\rceil})/f^{r(1+\ldots+p^{e(s-1)})}\right)\cdot e_{\alpha}\\
&=\bigcup_{s\geq N}\left( D^{es}(f^{\lceil p^{es}\alpha\rceil})/f^{r(1+\ldots+p^{e(s-1)})}\right)\cdot e_{\alpha}\\
&=\bigcup_{s\geq N}\left(  \tau(f^{\alpha})^{[p^{se}]} /f^{r(1+\ldots+p^{e(s-1)})}\right)\cdot e_{\alpha}.
\end{align*}

As $f^r \tau(f^{\alpha})=\tau(f^{r+\alpha})=\tau(f^{p^e\alpha})\subset \tau(f^{\alpha})^{[p^e]}$ by Lemma \ref{Mult p},
it is a root for the $F^{e}$-submodule of $M_\alpha$ generated by the morphism
$$\tau(f^{\alpha}) \FDer{f^r} \tau(f^{\alpha})^{[p^e]},$$ which is
$$
\bigcup_{s} \left(\tau(f^{\alpha})^{[p^{se}]}/f^{r(1+\ldots+p^{e(s-1)})}\right) \cdot e_\alpha.
$$
Hence, $D_R\cdot f^{\lceil\alpha\rceil} e_{\alpha}$ is the $F^e$-submodule of $M_\alpha$ generated by the morphism
$\tau(f^{\alpha}) \FDer{f^r} \tau(f^{[\alpha})^{[p^e]}.$ 

As $f^r\cI^s(f^\alpha)\subset \cI^s(f^\alpha)^{[p^e]}$ for every $s,$ we have that $\cI^s(f^\alpha)$ also generates an $F^e$-submodule of $M_\alpha.$
Since $\cI^{s+1}(f^\alpha)/\cI^s(f^\alpha)$ is the kernel of the morphism $R/\cI^s(f^\alpha)\FDer{f^r}R/\cI^s(f^\alpha)^{[p^e]},$

$$\cI^s(f^\alpha)\FDer{f^r} \cI^{s}(f^\alpha)^{[p^e]}\hbox{ and }\cI^{s+1}(f^\alpha)\FDer{f^r} \cI^{s+1}(f^\alpha)^{[p^e]}$$ generate the same $F^e$-submodule by Lemma \ref{GeneratesSame}.
Therefore, $D_R\cdot f^{\lceil\alpha\rceil} e_{\alpha}$ is the $F^e$-submodule generated by $\FJac{\alpha}{f}\FDer{f^r}\FJac{\alpha}{f}^{[p^e]}$ because $\FJac{\alpha}{f}=\cI^{s}(f^\alpha)$ for $s\gg 0.$

\end{proof}

\begin{Lemma}\label{finI}
Let $I\subset R$ be a nonzero ideal such that $I\subseteq(I^{[p^{e}]}:f^{r}).$
Then $f\in\sqrt{I}$.
\end{Lemma}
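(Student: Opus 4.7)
The plan is to prove the contrapositive: assuming $f \notin \sqrt{I}$, derive a contradiction with $I \neq 0$.

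First I would pick a minimal prime $P$ over $I$ with $f \notin P$, which exists precisely because $f \notin \sqrt{I}$. Localizing the inclusion $f^r I \subseteq I^{[p^e]}$ at $P$, the element $f^r$ becomes a unit in $R_P$, so we obtain
\[
I R_P \;\subseteq\; (I R_P)^{[p^e]}.
\]
Next I would iterate this containment. Since $R_P$ is regular, Frobenius is flat and preserves inclusions, so applying the functor $(-)^{[p^e]}$ repeatedly gives
\[
I R_P \;\subseteq\; (I R_P)^{[p^e]} \;\subseteq\; (I R_P)^{[p^{2e}]} \;\subseteq\; \cdots \;\subseteq\; (I R_P)^{[p^{en}]}
\]
for every $n \in \NN$.

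Now I would use that $I \subseteq P$, so $I R_P \subseteq PR_P$ and consequently $(I R_P)^{[p^{en}]} \subseteq (P R_P)^{[p^{en}]} \subseteq (P R_P)^{p^{en}}$, where the last containment holds because the ideal generated by $p^{en}$-th powers of elements of $P R_P$ is contained in the ordinary $p^{en}$-th power. Combining,
\[
I R_P \;\subseteq\; \bigcap_{n \geq 1} (P R_P)^{p^{en}} \;=\; 0,
\]
the last equality being Krull's intersection theorem in the Noetherian local ring $R_P$.

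Finally, since $R$ is a domain, the localization $R_P$ is also a domain and $I R_P = 0$ forces $I = 0$, contradicting the hypothesis $I \neq 0$. Hence $f \in \sqrt{I}$. There is no real obstacle here; the only subtle point is the choice of localization — one must pick $P$ minimal over $I$ (rather than an arbitrary prime containing $I$) only insofar as $I \neq 0$ already guarantees $I R_P \neq 0$ in a domain, so in fact any prime avoiding $f$ but containing $I$ suffices, and the argument above is essentially a Nakayama/Krull intersection argument packaged via Frobenius flatness.
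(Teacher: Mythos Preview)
Your proof is correct and follows essentially the same approach as the paper: localize at a prime containing $I$ but not $f$, iterate the containment $IR_P\subseteq (IR_P)^{[p^{e}]}$ using flatness of Frobenius, and apply Krull's intersection theorem to force $IR_P=0$, contradicting $I\neq 0$ in the domain $R$. The paper phrases it as showing $IR_P=R_P$ for every prime $P$ not containing $f$ (hence $IR_f=R_f$), but the core argument is identical.
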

\begin{proof}
We have that for every $s\in\NN,$  $I\subset (I^{[p^{se}]}:f^{r(1+p^e+\ldots +p^{e(s-1)})} )$ by flatness of Frobenius and properties of colon ideals.
Thus,
$I R_{f}\subset((I R_{f})^{[p^{se}]}: f^{r(1+p^e+\ldots +p^{e(s-1)})})=(IR_{f})^{[p^{se}]}.$ 
Let $P\subset R$ be a prime ideal of $R$ that does not contain $f.$
We have that $IR_P=I^{[p^{es}]}R_P$ for every $s>0.$
We claim that $IR_P=R_P$ for every $P;$ otherwise, 
\[
IR_P\subset\bigcap_s (IR_P)^{[p^{se}]}\subseteq\bigcap (PR_P)^{p^{se}}=0
\]
and we know that $IR_P\neq 0.$
Therefore, $IR_{f}=R_{f}$ and $f\in\sqrt{I}$.
\end{proof}

\begin{Prop}\label{AlphaSimpleFMod}
For any nonzero $F^e$-submodule $N$ of $M_{\alpha},$
 $N_\alpha\subset N.$ In particular, 
$N_\alpha$ is a simple $F$-module.
\end{Prop}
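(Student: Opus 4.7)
The approach is to represent $N$ by a root ideal, combine this ideal with the test ideal $\tau(f^\alpha)$, and apply Lemma \ref{GeneratesSame} to deduce $N_\alpha\subseteq N$. First, by the structure theory of $F^e$-modules (\cite[Corollary $2.6$]{LyuFMod}, as used just before Lemma \ref{GeneratesSame}), the nonzero $F^e$-submodule $N$ is generated by a root morphism $I\FDer{f^r}I^{[p^e]}$ for some nonzero ideal $I\subset R$ with $f^rI\subseteq I^{[p^e]}$. Applying Lemma \ref{finI} yields $f\in\sqrt{I}$, so $f^m\in I$ for some $m\geq 1$.

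Since $f^r\tau(f^\alpha)\subseteq\tau(f^\alpha)^{[p^e]}$ (by Skoda's theorem combined with Lemma \ref{Mult p}, exactly as in the proof of Proposition \ref{AlphaFMod}), the sum $I+\tau(f^\alpha)$ also satisfies the root condition, and thus generates an $F^e$-submodule $N'$ containing both $N$ and $N_\alpha$. To obtain $N_\alpha\subseteq N$ I would prove $N'=N$ via Lemma \ref{GeneratesSame} applied to $I\subseteq I+\tau(f^\alpha)$. After iterating the root condition on $I$, this reduces to producing $\ell\in\NN$ with
\[
f^{\alpha(p^{e\ell}-1)}\tau(f^\alpha)\subseteq I^{[p^{e\ell}]}.
\]
Iterating Skoda and Lemma \ref{Mult p} gives $f^{\alpha(p^{e\ell}-1)}\tau(f^\alpha)\subseteq\tau(f^\alpha)^{[p^{e\ell}]}$, so by flatness of Frobenius the entire problem reduces to the key containment $\tau(f^\alpha)\subseteq I$.

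Establishing $\tau(f^\alpha)\subseteq I$ is the main obstacle. I would approach it through the identity $\tau(f^\alpha)^{[p^{e'}]}=D^{(e')}\cdot f^{\lceil\alpha p^{e'}\rceil}$ for $e'$ sufficiently large, obtained by the stabilization of the union defining $\tau(f^\alpha)$ together with the identity $\left((f)^{[1/p^{e'}]}\right)^{[p^{e'}]}=D^{(e')}f$ recalled in Section \ref{SecGTI}. Since $I^{[p^{e'}]}$ is $D^{(e')}$-stable, being generated by $p^{e'}$-th powers and hence preserved by $R^{p^{e'}}$-linear maps, it suffices to exhibit some $e'$ with $f^{\lceil\alpha p^{e'}\rceil}\in I^{[p^{e'}]}$. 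This membership comes from the iterated root condition $f^{\alpha(p^{e'}-1)+m}\in I^{[p^{e'}]}$ by a careful comparison of exponents; when $m>\lceil\alpha\rceil$, the argument requires first replacing $I$ by the stabilized chain $\cI^s=(I^{[p^{es}]}:f^{\alpha(p^{es}-1)})$, which generates the same $F^e$-submodule as $I$ by Lemma \ref{GeneratesSame} (as in the derivation in Proposition \ref{FJacLimitFlag}).

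Granted the containment, the simplicity of $N_\alpha$ as an $F$-module follows immediately: any nonzero $F^e$-submodule of $N_\alpha$ is a nonzero $F^e$-submodule of $M_\alpha$, hence contains $N_\alpha$ by the first part, and therefore coincides with $N_\alpha$.
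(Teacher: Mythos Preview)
Your overall strategy---reduce to the containment $\tau(f^\alpha)\subseteq I$ and then invoke Lemma~\ref{GeneratesSame}---is sound and is a legitimate alternative to the paper's proof, which instead computes $D_R\cdot f^n e_\alpha$ explicitly inside $M_\alpha$ using the $D$-action formula and shows this $D$-submodule already contains $N_\alpha$. Both arguments hinge on the fact that some power $f^n$ lies in the root ideal of $N$; the paper exploits this through the $D$-module structure of $N$, while you stay at the level of ideals and Frobenius powers.

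However, your argument for the key containment $\tau(f^\alpha)\subseteq I$ has a genuine gap. You need $f^{\lceil\alpha p^{e'}\rceil}\in I^{[p^{e'}]}$, and from the iterated root condition you only have $f^{\alpha(p^{e'}-1)+m}\in I^{[p^{e'}]}$; since $\lceil\alpha p^{e'}\rceil=\alpha(p^{e'}-1)+\lceil\alpha\rceil$, the comparison succeeds only when $m\leq\lceil\alpha\rceil$. Your proposed remedy of passing to the stabilized ideal $\cI^s=(I^{[p^{es}]}:f^{\alpha(p^{es}-1)})$ is circular: asking for $f^{\lceil\alpha\rceil}\in\cI^s$ is exactly asking for $f^{\lceil\alpha p^{es}\rceil}\in I^{[p^{es}]}$, which is the very membership you are trying to establish, and there is no reason the stabilized ideal contains a smaller power of $f$ than $I$ does.

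The fix is not to chase the specific exponent $\lceil\alpha p^{e'}\rceil$ but to use right-continuity of test ideals together with the identity $\tau(f^{a/p^b})=(f^a)^{[1/p^b]}$ from Section~\ref{SecGTI}. From $f^{m+\alpha(p^{es}-1)}\in I^{[p^{es}]}$ one obtains $\tau\bigl(f^{(m+\alpha(p^{es}-1))/p^{es}}\bigr)=(f^{m+\alpha(p^{es}-1)})^{[1/p^{es}]}\subseteq I$; as $s\to\infty$ the exponent tends to $\alpha$ from above, so for $s$ large the left side equals $\tau(f^\alpha)$. This is precisely the mechanism the paper uses in the proof of Proposition~\ref{TestIdealMinimalRoot}, and it avoids the exponent comparison entirely. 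Equivalently, in your $D^{(e')}$-stability framework, observe that $D^{(e')}f^{m+\alpha(p^{e'}-1)}=\tau(f^\alpha)^{[p^{e'}]}$ for $e'$ large (again by right-continuity), and this element already lies in $I^{[p^{e'}]}$.
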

\begin{proof}
As any two $R$-modules intersect in $R_f,$ we have that there is a
minimal simple $F^e$-submodule $N.$ 
We know that $Re_\alpha \bigcap N=Ie_\alpha,$ where $I=(I^{[p^e]}:f^{r})$  \cite[Corollary $2.6$]{LyuFMod}.
Then $f\in I$ by Lemma \ref{finI}.

There exists an $N$ such that
$\tau(f^{\alpha})^{[p^{se}]}=\left(\left(f^{\lceil p^e\alpha\rceil }\right)^{[1/p^{se}]}\right)^{[p^{se}]}=D^{se}(f^{\lceil p^{se}\alpha\rceil})$  and that
$\tau (f^{\frac{n}{p^{se}}+ \alpha})=\tau(f^\alpha)$ for every $s\geq N$ (cf. Section \ref{SecGTI}). 

\begin{align*}
D_R\cdot f^{n} e_{\alpha} &= \bigcup_{s\geq 0}D^{es}\cdot f^{n}e_{\alpha}\\
 &= \bigcup_{s\geq N}D^{es}\cdot f^{n}e_{\alpha}\\
&=\bigcup_{s\geq N}\left( D^{es}(f^{r(1+\ldots+p^{e(s-1)})}f^{n})/f^{r(1+\ldots+p^{e(s-1)})}\right)\cdot e_{\alpha}\\
&=\bigcup_{s\geq N}\left( D^{es}(f^{n+r(1+\ldots+p^{e(s-1)})})/f^{r(1+\ldots+p^{e(s-1)})}\right)\cdot e_{\alpha}\\
&=\bigcup_{s\geq N}\left( \left(  \left(f^{n+r(1+\ldots+p^{e(s-1)})}\right)^{[1/p^{se}]}\right)^{[p^{es}]} /f^{r(1+\ldots+p^{e(s-1)})}\right)\cdot e_{\alpha}\\
&=\bigcup_{s\geq N}\left( \tau (f^{\frac{n}{p^{es}} + \frac{r(1+\ldots +p^{e(s-1)} )}{p^{es}}})^{[p^{es}]} / f^{r(1+\ldots+p^{e(s-1)})}\right) \cdot e_{\alpha} \\
& \supset  \bigcup_{s\geq0}\left( \tau (f^{\frac{n}{p^{es}}+ \alpha})^{[p^{es}]} / f^{r(1+\ldots+p^{e(s-1)})}\right) \cdot e_{\alpha} \\
&\supset \bigcup_{s\geq N}\left( \tau (f^{\alpha})^{[p^{es}]} / f^{r(1+\ldots+p^{e(s-1)})}\right) \cdot e_{\alpha} \\
&= D_R\cdot f^{\lceil\alpha\rceil} e_{\alpha}\\
&= N_{\alpha}\\
\end{align*}
Therefore, $N_\alpha\subset N.$ 
\end{proof}

\begin{Cor}\label{AlphaSimpleDMod}
$N_\alpha$ is a simple $D$-module.
\end{Cor}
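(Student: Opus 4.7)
The strategy is to deduce the $D$-simplicity of $N_\alpha$ from the $F^e$-simplicity established in Proposition \ref{AlphaSimpleFMod} by passing to the localization at $f$. Let $N'\subseteq N_\alpha$ be a nonzero $D_R$-submodule; the goal is to show $N'=N_\alpha$.

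First, I would observe that $M_\alpha=R_f\cdot e_\alpha$ is naturally an $R_f$-module, and its $D_R$-action extends to a $D_{R_f}$-action (using that $D_{R_f}=R_f\otimes_R D_R$, which is valid since $R$ is $F$-finite). Consequently, the $R_f$-submodule $R_f\cdot N'\subseteq M_\alpha$ is in fact a $D_{R_f}$-submodule. Moreover, $M_\alpha$ is a simple $D_{R_f}$-module: as an $R_f$-module it is free of rank one on $e_\alpha$, and its $D_{R_f}$-structure is (up to the $\alpha$-twist built into the Frobenius action $b/f^m\cdot e_\alpha\mapsto b^{p^e}/f^{mp^e+r}\cdot e_\alpha$) the standard one on $R_f$, which is simple because $R$ is $D_R$-simple as a regular $F$-finite domain and simplicity passes to localizations by the principle recalled in Section \ref{SecDMod}. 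Since $N'$ is nonzero and $M_\alpha$ is $R_f$-torsion-free, $R_f\cdot N'$ is a nonzero $D_{R_f}$-submodule of $M_\alpha$, so $R_f\cdot N'=M_\alpha$.

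In particular $e_\alpha\in R_f\cdot N'$, so there exists an integer $k\geq 0$ with $f^k e_\alpha\in N'$. Applying the chain of inclusions in the proof of Proposition \ref{AlphaSimpleFMod} with $n=k$ gives $D_R\cdot f^k e_\alpha\supseteq N_\alpha$. Combined with $D_R\cdot f^k e_\alpha\subseteq N'$ and the hypothesis $N'\subseteq N_\alpha$, this forces $N'=N_\alpha$, proving $D$-simplicity.

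The main obstacle is the verification that $M_\alpha$ is $D_{R_f}$-simple; one has to track how the $\alpha$-twist interacts with $f$-localization and to check that any $D_{R_f}$-submodule of $M_\alpha$ pulls back, under the $R_f$-module identification $M_\alpha\cong R_f$, to a $D_{R_f}$-submodule of $R_f$. Once this technical point is established, everything else is an immediate consequence of the explicit computation already carried out in Proposition \ref{AlphaSimpleFMod}.
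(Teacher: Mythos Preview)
Your argument is correct and takes a genuinely different route from the paper. The paper reduces to the local case (using that the construction of $N_\alpha$ commutes with localization at a maximal ideal) and then invokes Lyubeznik's theorem \cite[Theorems 3.2 and 5.6]{LyuFMod} that over a regular $F$-finite local ring a simple $F^e$-finite $F^e$-module is a finite direct sum of simple $D$-modules; since any two nonzero $R$-submodules of $R_f\,e_\alpha$ meet, only one summand can occur, so $N_\alpha$ is $D$-simple. You instead localize at $f$ rather than at a maximal ideal, avoid Lyubeznik's structural theorem entirely, and recycle the containment $N_\alpha\subseteq D_R\cdot f^k e_\alpha$ already computed in Proposition \ref{AlphaSimpleFMod}. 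The technical point you flag is real but routine: for $P\in D^{(se)}_{R_f}$ the action on $c\,e_\alpha$ is $f^{-g}P(f^g c)\,e_\alpha$ with $g=r(1+p^e+\cdots+p^{e(s-1)})$, and since $f^g$ is a unit in $R_f$ this is conjugate to the standard action, so $D_{R_f}$-submodules of $M_\alpha$ correspond to $D_{R_f}$-submodules of $R_f$ (and the latter is simple because $R$ is a regular $F$-finite domain). The upshot is that your approach is more elementary and self-contained, while the paper's is shorter but leans on a nontrivial structure theorem for $F$-modules.
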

\begin{proof}
Since  $\FJac{\alpha}{f}$ commutes with localization, we have that 
the construction of $N_\alpha$ commutes with localization. 
Suppose that $N\subset N_\alpha$ is a nonzero $D$-submodule.
After localizing at a maximal ideal, $m,$ we have that $(R\setminus m)^{-1} N\subset (R\setminus m)^{-1} N_\alpha.$
It suffices to prove the statement in the local case.

If $R$ is a local ring, 
$N_\alpha$ is a simple $F^e$-module and it is a direct sum of simple $D_R$-modules \cite[Theorem $3.2$ and $5.6$]{LyuFMod}.
Since every two $R$-modules in $R_f$ intersect, $N_\alpha$ must be a simple $D_R$-module.
\end{proof}

\begin{Prop}\label{TestIdealMinimalRoot}
Let $I\subset \tau(f^\alpha)$ be a nonzero ideal such that $I\subset (I^{[p^e]}:f^r).$
Then, $I=\tau(f^\alpha).$
In particular, $\tau(f^\alpha)e_\alpha$ is a minimal root for $N_\alpha.$
\end{Prop}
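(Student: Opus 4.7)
The plan is to exploit the simplicity of $N_\alpha$ as an $F^e$-module (Proposition \ref{AlphaSimpleFMod}) together with the generation criterion of Lemma \ref{GeneratesSame} to pin $I$ down exactly.

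First, because $I$ is a nonzero ideal with $f^r I \subset I^{[p^e]}$, it generates an $F^e$-submodule $N_I$ of $M_\alpha$; since $I \subset \tau(f^\alpha)$ and $\tau(f^\alpha)$ itself generates $N_\alpha$ by Proposition \ref{AlphaFMod}, we have $N_I \subset N_\alpha$. The submodule $N_I$ is nonzero because $I e_\alpha \neq 0$, so the simplicity afforded by Proposition \ref{AlphaSimpleFMod} forces $N_I = N_\alpha$.

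Next, Lemma \ref{GeneratesSame} applied to the inclusion $I \subset \tau(f^\alpha)$ produces an integer $\ell \in \NN$ with
$$f^{r(1 + p^e + \cdots + p^{e(\ell-1)})} \, \tau(f^\alpha) \subset I^{[p^{e\ell}]}.$$
Since $\alpha = r/(p^e-1)$, the exponent on $f$ equals $\alpha(p^{e\ell}-1)$, a positive integer. Iterating the principal case of Skoda's Theorem---namely $\tau(f^{c+1}) = f \cdot \tau(f^c)$ for $c \geq 0$---gives $f^{\alpha(p^{e\ell}-1)} \tau(f^\alpha) = \tau(f^{\alpha p^{e\ell}})$, so $\tau(f^{\alpha p^{e\ell}}) \subset I^{[p^{e\ell}]}$. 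Applying the $[1/p^{e\ell}]$ operation to both sides and using that $\tau(f^{\alpha p^{e\ell}})^{[1/p^{e\ell}]} = \tau(f^\alpha)$ yields $\tau(f^\alpha) \subset I$, and hence $I = \tau(f^\alpha)$. The equality $\tau(f^{\alpha p^{e\ell}})^{[1/p^{e\ell}]} = \tau(f^\alpha)$ will follow, for $s$ sufficiently large, from writing $\tau(f^{\alpha p^{e\ell}}) = (f^{\lceil p^{s+e\ell}\alpha\rceil})^{[1/p^s]}$ and invoking the identity $(J^{[1/p^s]})^{[1/p^{e\ell}]} = J^{[1/p^{s+e\ell}]}$.

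For the minimal root assertion, Proposition \ref{AlphaFMod} shows that $\tau(f^\alpha) \FDer{f^r} \tau(f^\alpha)^{[p^e]}$ generates $N_\alpha$; since $R$ is a domain and $f \neq 0$ the map is injective, so $\tau(f^\alpha) e_\alpha$ is a root of $N_\alpha$. Any root strictly contained in it takes the form $I e_\alpha$ with $I \subsetneq \tau(f^\alpha)$ nonzero and $f^r I \subset I^{[p^e]}$, which the first part of the proposition rules out. I expect the main technical obstacle to be the identification $\tau(f^{\alpha p^{e\ell}})^{[1/p^{e\ell}]} = \tau(f^\alpha)$; morally it says that the factor of $p^{e\ell}$ introduced by Skoda is undone by the iterated bracket-power-inverse, but making this precise requires careful bookkeeping with the ceiling function and with the \emph{smallest $J$} characterization of the $[1/p^{\bullet}]$ operation.
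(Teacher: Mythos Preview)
Your argument is correct, but it takes a different route from the paper's. The paper proves $I=\tau(f^\alpha)$ directly, \emph{without} invoking the simplicity of $N_\alpha$: iterating $I\subset (I^{[p^e]}:f^r)$ gives $I\subset (I^{[p^{es}]}:f^{r(p^{es}-1)/(p^e-1)})$ for all $s$; Lemma~\ref{finI} supplies some $f^n\in I$, hence $f^{\,n+r(p^{es}-1)/(p^e-1)}\in I^{[p^{es}]}$ and so $\bigl(f^{\,n+r(p^{es}-1)/(p^e-1)}\bigr)^{[1/p^{es}]}\subset I$; since the exponent divided by $p^{es}$ tends to $\alpha$ from above, for $s$ large this bracket ideal is $\tau(f^\alpha)$, forcing $\tau(f^\alpha)\subset I$. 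Simplicity (Proposition~\ref{AlphaSimpleFMod}) enters only at the very end, to deduce the minimal-root statement.

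You instead front-load the simplicity: from $N_I=N_\alpha$ and Lemma~\ref{GeneratesSame} you extract the containment $f^{r(1+\cdots+p^{e(\ell-1)})}\tau(f^\alpha)\subset I^{[p^{e\ell}]}$, rewrite the left side via Skoda as $\tau(f^{p^{e\ell}\alpha})$, and then apply $(\,\cdot\,)^{[1/p^{e\ell}]}$ together with $\tau(f^{p^{e\ell}\alpha})^{[1/p^{e\ell}]}=\tau(f^\alpha)$. Both arguments eventually rest on the same explicit description $\tau(f^{m/p^j})=(f^m)^{[1/p^j]}$; yours is more structural and makes the $F^e$-module picture do the work, while the paper's is more self-contained and shows that the equality $I=\tau(f^\alpha)$ is really an elementary consequence of Lemma~\ref{finI} alone. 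Your ``main technical obstacle'' is handled correctly: the identity $(J^{[1/p^s]})^{[1/p^{e\ell}]}=J^{[1/p^{s+e\ell}]}$ is standard and, combined with $\tau(f^{p^{e\ell}\alpha})=(f^{\lceil p^{s+e\ell}\alpha\rceil})^{[1/p^s]}$ for $s\gg 0$, gives exactly what you need.
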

\begin{proof}
We have that $I\subset (I^{[p^{es}]}:f^{\frac{p^{es}-1}{p^e-1}}r)$ for every $s\in\NN$ by flatness of Frobenius and properties of colon ideals.
There exists $n\in \NN$ such that $f^n\in I$ by Lemma \ref{finI}.
We have that 
$$
f^{n+\frac{p^{es}-1}{p^e-1} r} \in I^{[p^{es}]}, \hbox{ and so }
\left( f^{n+\frac{p^{es}-1}{p^e-1} r} \right)^{[1/p^{es}]} \subset I.
$$ 
In addition, 
$$
n+\frac{p^{es}-1}{p^e-1} r=\frac{(p^e-1)n+(p^{es}-1)r}{p^e-1} > \frac{r}{p^e-1},$$
and 
$$
\lim\limits_{s\to \infty} \frac{(p^e-1)n+(p^{es}-1)r}{p^e-1}=\frac{r}{p^e-1}=\alpha.
$$
There exists an $N$ such that 
$$
\tau(f^\alpha)=\left( f^{n+\frac{p^{eN}-1}{p^e-1} r} \right)^{[1/p^{eN}]} R\subset I \subset \tau(f^\alpha).
$$
Therefore, $I =\tau(f^\alpha).$
The claim about being a minimal root follows because $I\FDer{f^r} I^{[p^e]}$ generates an nonzero $F^e$-submodule of
$N_\alpha,$
which is simple by Proposition \ref{AlphaSimpleFMod}.
\end{proof}

Using ideas analogous to the previous proof, we recover a result previously obtained by Blickle \cite[Proposition $3.5$]{GammaSheaves}

\begin{Prop}\label{MinimalRoot}
Let $\beta=\frac{a}{b}\in\QQ_{>0}.$  If $\alpha>\beta,$ 
then $f^r\tau(f^\beta)\subset \tau(f^\beta)^{[p^e]}$ and it generates $M_\alpha$ as 
$F$-module.
Moreover, $\tau(f^{\alpha-\epsilon})e_\alpha$ is the minimal root of $M_\alpha$ as a $F^e$-module.
\end{Prop}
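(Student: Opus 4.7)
The statement decomposes into three claims: (a) $f^r \tau(f^\beta) \subset \tau(f^\beta)^{[p^e]}$, so that the morphism $\tau(f^\beta) \overset{f^r}{\to} \tau(f^\beta)^{[p^e]}$ is a root morphism in the sense of Section \ref{SecFMod}; (b) the resulting $F^e$-submodule of $M_\alpha$ is all of $M_\alpha$; and (c) when $\beta = \alpha - \epsilon$ this root is the minimal one. For (a), I would apply Skoda's Theorem to the principal ideal $(f)$, obtaining $\tau(f^{r + \beta}) = f^r \tau(f^\beta)$. The hypothesis $\alpha > \beta$ is equivalent to $r + \beta > p^e \beta$, so monotonicity of test ideals gives $\tau(f^{r + \beta}) \subseteq \tau(f^{p^e \beta})$, and an $e$-fold iteration of Lemma \ref{Mult p} yields $\tau(f^{p^e \beta}) \subseteq \tau(f^\beta)^{[p^e]}$. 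Chaining the three inclusions proves (a).

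For (b), I would compare with the canonical generating morphism $R \overset{f^r}{\to} R$ of $M_\alpha$ through Lemma \ref{GeneratesSame}. Since $\tau(f^\beta) \subseteq R$ and both satisfy the root condition (trivially so for $R$), the two ideals generate the same $F^e$-submodule exactly when $f^{r(1 + p^e + \cdots + p^{e(\ell - 1)})} \in \tau(f^\beta)^{[p^{e\ell}]}$ for some $\ell$. The exponent equals $\alpha(p^{e\ell} - 1)$, which is an integer. For $\ell$ sufficiently large the defining union for $\tau(f^\beta)$ stabilizes, so $\tau(f^\beta)^{[p^{e\ell}]} \supseteq f^{\lceil p^{e\ell}\beta\rceil} R$, and the task reduces to verifying $\alpha(p^{e\ell} - 1) \geq \lceil p^{e\ell}\beta\rceil$; this in turn amounts to $(\alpha - \beta) p^{e\ell} - \alpha \geq 1$, which holds eventually since $\alpha > \beta$.

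For (c), specializing (a) and (b) to $\beta = \alpha - \epsilon$ shows that $\tau(f^{\alpha - \epsilon})$ is a root of $M_\alpha$. To establish minimality, let $I \subseteq \tau(f^{\alpha - \epsilon})$ be any root of $M_\alpha$. Applying Lemma \ref{GeneratesSame} to the inclusion $I \subseteq R$ furnishes $\ell$ with $f^{\alpha(p^{e\ell} - 1)} \in I^{[p^{e\ell}]}$. Taking the $[1/p^{e\ell}]$-operation (inverse to $[p^{e\ell}]$ in the required sense) and using the identity $(f^s)^{[1/p^{e\ell}]} = \tau(f^{s/p^{e\ell}})$ for integer $s$, one obtains $\tau(f^{\alpha - \alpha/p^{e\ell}}) \subseteq I$. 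Choosing $\ell$ large enough that $\alpha - \alpha/p^{e\ell}$ lies in the interval on which $c \mapsto \tau(f^c)$ is constantly equal to $\tau(f^{\alpha - \epsilon})$ yields $\tau(f^{\alpha - \epsilon}) \subseteq I$, whence equality. I do not anticipate a serious obstacle: the arithmetic identity $r(1 + p^e + \cdots + p^{e(\ell-1)}) = \alpha (p^{e\ell} - 1)$ converts the combinatorial bookkeeping into asymptotic statements that are immediately controlled by the left-continuity of $\tau$ at $\alpha$, and the techniques mirror those already developed in Proposition \ref{TestIdealMinimalRoot}.
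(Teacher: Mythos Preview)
Your proposal is correct and follows essentially the same route as the paper: Skoda's theorem together with Lemma~\ref{Mult p} for part~(a), Lemma~\ref{GeneratesSame} for parts~(b) and~(c), and the identity $(f^s)^{[1/p^{e\ell}]}=\tau(f^{s/p^{e\ell}})$ from Section~\ref{SecGTI} to pass from bracket powers back to test ideals. The only cosmetic difference is that in~(b) the paper first shows that $\tau(f^{\gamma})$ with $\gamma=\alpha(p^{e\ell}-1)/p^{e\ell}$ generates $M_\alpha$ and then uses the containment $\tau(f^{\gamma})\subset\tau(f^{\beta})$, whereas you argue directly that $f^{\alpha(p^{e\ell}-1)}\in\tau(f^{\beta})^{[p^{e\ell}]}$; and in~(c) you restrict to roots $I\subseteq\tau(f^{\alpha-\epsilon})$ while the paper shows $\tau(f^{\alpha-\epsilon})\subseteq I$ for \emph{every} root. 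One small remark on~(c): the $\ell$ produced by Lemma~\ref{GeneratesSame} is not a priori at your disposal to enlarge, but this is harmless since $\alpha-\alpha/p^{e\ell}<\alpha$ already forces $\tau(f^{\alpha-\epsilon})\subseteq\tau(f^{\alpha-\alpha/p^{e\ell}})$ by monotonicity (or, alternatively, the root condition $f^{r}I\subset I^{[p^{e}]}$ lets you propagate the containment to all larger $\ell$).
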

\begin{proof}
Since $\frac{r}{p^e-1}>\frac{a}{b},$
we have that $br+a>p^e a.$
Then 
$$
f^r\tau(f^\beta)
=\tau(f^{\beta+r})=\tau(f^{\frac{a+br}{p^e-1}})
\subset \tau(f^\frac{p^e a}{p^e-1})
=\tau(f^{p^e \beta})
\subset \tau(f^{\beta})^{[p^e]}.
$$
Thus $\tau(f^\beta)$ generates an $F$-submodule of $M_\alpha.$

Since $\lim\limits_{\ell\to\infty}\frac{r(1+\ldots p^{e(\ell-1)})}{p^{e\ell}}=\alpha,$
we can pick an $\ell\in\NN$ such that $r(1 +\ldots p^{e(\ell-1)})>p^{e\ell}\beta.$

We have that 
{\small
$$
f^{r(1+\ldots p^{e(\ell-1)})}R=
\left(\left( f^{r(1 +\ldots p^{e(\ell-1)})} R \right)^{[p^e]}\right)^{[1/p^e]}\subset
\left(\left( f^{r(1+\ldots p^{e(\ell-1)})}  \right)^{[1/p^e]}\right)^{[p^e]}=
\tau( f^\frac{r(1 +\ldots p^{e(\ell-1)})}{p^{e\ell}})^{[p^e]}.
$$
}
Then $\tau( f^\frac{{r(1 +p^{e(\ell-1)})}}{p^{e\ell}})$
generates $M_\alpha$ as a $F^e$-module by Lemma \ref{GeneratesSame} because $\frac{(p^{e(\ell-1)} +\ldots 1)r}{p^{e\ell}}<\alpha.$ Since 
$ \tau( f^\frac{{p^{e(\ell-1)} +\ldots 1)r}}{p^{e\ell}})\subset \tau(f^\beta),$
$\tau(f^{\beta})$ also generates $M_\alpha$ as a $F^e$-module.

Let $I \subset R$ be an ideal such that $f^rI\subset I,$ and that $f^{r(q^{\ell-1} +\ldots 1) }R\subset I^{[q^\ell]}$
for some $\ell\in\NN.$ Then, 
$$
\left(f^{r(1+\ldots p^{e(\ell-1)}) }\right)^{[1/p^{e\ell}]}\subset I,
$$
and so,
$$
\tau(f^{\alpha-\epsilon})\subset \tau(f^{\frac{r(1+\ldots p^{e(\mu-1)})}{p^{e\mu}}})=
\left(f^{r(1+\ldots p^{e(\mu-1)})}\right)^{[1/p^{e\mu}]}\subset I.
$$
Thus,
$\tau(f^{\alpha-\epsilon})$
is contained in every ideal that generates $M_\alpha.$
Since $\tau(f^{\alpha-\epsilon})$  generates 
$M_\alpha,$ $\tau(f^{\alpha-\epsilon})e_\alpha$ is the minimal root of $M_\alpha$ as an $F^e$-module.
\end{proof}

\begin{proof}[Proof of Theorem \ref{MainAlpha}]
We have that 
$\alpha=\frac{r}{p^{e}-1}$  
is not an $F$-jumping number if and only if 
$\tau(f^\alpha)\neq \tau(f^{\alpha-\epsilon}).$
This happens if and only if
$N_\alpha=M_\alpha$ by Propositions \ref{TestIdealMinimalRoot} and \ref{MinimalRoot}, which is equivalent to $\FJac{f}{\alpha}=R$.
The rest follows  from Proposition \ref{AlphaSimpleFMod} and Corollary \ref{AlphaSimpleDMod}.
\end{proof}

Theorem \ref{MainAlpha} gives the following algorithm to decide whether a number of the form 
$\frac{r}{p^e-1}$ is an $F$-jumping number for $f\in R:$

\begin{Alg}\label{Algoritmo} $ $\\
\emph{Input:} $\alpha=\frac{r}{p^e-1}$ and $f\in R$.\\
\emph{Output:}  $R$ if $\alpha$ is not $F$-jumping number for $f$ and a proper ideal otherwise.\\
\emph{Process:}
Compute $\tau(f^\alpha)$; \\
Take $J_1=\tau(f^\alpha)$;\\
Compute $J_{n+1}=(J^{[p^e]}_n:f^r)$ until there is an $N$ such that $J_N=J_{N+1}$;\\
\emph{Return:} $J_N$.
\end{Alg}






\begin{Ex}
Let $R=\FF_{13}[x,y]$ and $f=x^2+y^3.$ Therefore:
\begin{itemize}
\item[\hbox{\rm{(i)}}] If $\alpha=\frac{11}{12},$ $\tau(f^\alpha)=(x,y)R$,
$J_1 =(x, y)R,$  and $J_2=((x^{13}, y^{13}): f^{11})=R$ because
$f^{11}$ is equal to
$
x^{22} -2x^{20}y^{3}+3x^{18} y^{6} 
-4x^{16} y^{9} +5x^{14} y^{12}-6x^{12}    
y^{15}-6x^{10} y^{18}+5x^{8} y^{21}-4x^{6} y^{24}+3x^{4}y^{27}-2x^{2}    y^{30}+y^{33}.
$
Then, $\frac{11}{12}$ is not an $F$-jumping number.

\item[\hbox{\rm{(ii)}}] If $\alpha=\frac{10}{12},$ $\tau(f^\alpha)=(x,y)R$,
$J_1 =(x,y)R,$  and $J_2=((x^{13}, y^{13}): f^{10})=(x,y)$ because 
$f^{10}$ is equal to
$
x^{20}-3x^{18} y^{3}+6x^{16} y^{6}+3x^{14} y^{9}+2x^{12} y^{12}+5x^{10} y^{15}+2x^{8}  y^{18}+3x^{6} y^{21}+6x^{4}    
y^{24}-3x^{2} y^{27}+y^{30}.
$
Then, $\frac{10}{12}$ is an $F$-jumping number.
\end{itemize}
\end{Ex}



\begin{Rem}
Since $F$-jumping ideals and test ideals commute with localization, we have that
$\sqrt{\FJac{\alpha}{f}}=\sqrt{(\tau(f^\alpha):\tau(f^{\alpha-\epsilon}))}.$
In general,
$\mathfrak{J}_F(f^\alpha)$ is not equal to $(\tau(f^\alpha):\tau(f^{\beta}))$.
Let $R=\FF_7[x,y],$ $f=x^3y^2$ and $\alpha=\frac{4}{6}.$ 
Then, $\beta=\frac{3}{6}$ is the biggest $F$-jumping number smaller than $\alpha.$
We have that $\tau(f^\beta)=xyR$ and $\tau(f^\alpha)=x^2yR$, and so
$(x^2yR:xyR)=xR.$ However,  $\FJac{\alpha}{f}=x^2.$
\end{Rem}

\begin{Prop}\label{LemaPowerP}
$\cI^j (f^{p^e\alpha})=\cI^{j-1} (f^{\alpha})^{[p^e]}$ for $j\geq 2.$
\end{Prop}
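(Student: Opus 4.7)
The plan is to reduce the statement, via induction on $j$, to a key identity coming from Skoda's theorem at the base case, and then propagate it using flatness of Frobenius. Throughout, I take the natural representation $p^e\alpha = \frac{p^e r}{p^e-1}$, so that the flag for $f^{p^e\alpha}$ satisfies $\cI^{j+1}(f^{p^e\alpha}) = (\cI^j(f^{p^e\alpha})^{[p^e]} : f^{p^e r})$.

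First I would establish the key identity $\tau(f^{p^e\alpha}) = f^r \tau(f^\alpha)$. Since $\alpha = \frac{r}{p^e-1}$, we have $p^e\alpha = \alpha + r$. Iterating Skoda's theorem $r$ times (for the principal ideal $(f)$, Skoda reads $\tau(f^c) = f\cdot\tau(f^{c-1})$ for $c \geq 1$, which applies since $\alpha > 0$) yields $\tau(f^{p^e\alpha}) = \tau(f^{\alpha + r}) = f^r\tau(f^\alpha)$.

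For the base case $j = 2$, I would compute
\[
\cI^2(f^{p^e\alpha}) = \bigl(\tau(f^{p^e\alpha})^{[p^e]} : (f^r)^{[p^e]}\bigr) = \bigl(\tau(f^{p^e\alpha}) : f^r\bigr)^{[p^e]},
\]
invoking flatness of Frobenius through the identity $(I^{[p^e]} : J^{[p^e]}) = (I:J)^{[p^e]}$. Substituting the key identity and using that $R$ is a domain gives $(f^r\tau(f^\alpha) : f^r) = \tau(f^\alpha)$, so $\cI^2(f^{p^e\alpha}) = \tau(f^\alpha)^{[p^e]} = \cI^1(f^\alpha)^{[p^e]}$. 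For the inductive step, assuming $\cI^j(f^{p^e\alpha}) = \cI^{j-1}(f^\alpha)^{[p^e]}$, I would compute
\[
\cI^{j+1}(f^{p^e\alpha}) = \bigl(\cI^{j-1}(f^\alpha)^{[p^{2e}]} : (f^r)^{[p^e]}\bigr) = \bigl(\cI^{j-1}(f^\alpha)^{[p^e]} : f^r\bigr)^{[p^e]} = \cI^j(f^\alpha)^{[p^e]},
\]
where the middle equality is again flatness of Frobenius and the last is the recursive definition of the flag.

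The main obstacle will be the base case $j=2$: the inductive step is a mechanical colon-Frobenius manipulation, but the base case requires Skoda's theorem to \emph{absorb} the factor $f^r$ hidden in $\tau(f^{p^e\alpha})$, together with cancellation in the domain $R$ to extract $\tau(f^\alpha)$.
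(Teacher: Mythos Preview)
Your proposal is correct and follows essentially the same route as the paper: induction on $j$, with the base case $j=2$ handled via the identity $p^e\alpha=\alpha+r$ together with Skoda's theorem and the colon--Frobenius formula $(I^{[p^e]}:J^{[p^e]})=(I:J)^{[p^e]}$, and the inductive step obtained by the same Frobenius-flatness manipulation applied to the recursive definition of the flag. The only cosmetic difference is that you make explicit the use of $R$ being a domain for the cancellation $(f^r\tau(f^\alpha):f^r)=\tau(f^\alpha)$, which the paper leaves implicit.
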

\begin{proof}
We will prove that $\cI^j (f^{p^e \alpha})=\cI^{j-1} (f^{\alpha})^{[p^e]}$ for $j\geq 2$ by induction on $j.$

If $j=2,$ 
$$\cI^2(f^{p^e\alpha})=\cI^2(f^{\alpha+r})
=(\tau(f^{\alpha+r})^{[p^e]}:f^{p^e r})
$$
$$
=(\tau(f^{\alpha+r}):f^{ r})^{[p^e]}=(f^r\tau(f^\alpha):f^{ r})^{[p^e]}=\tau(f^\alpha)^{[p^e]}
=\cI^1 (f^{\alpha}).
$$

Suppose that the claim is true for $j.$ Then,
$$
\cI^{j+1}(f^{p^e\alpha})=(\cI^j(f^{p^e\alpha})^{[p^e]}:f^{p^e r})
=(\cI^j(f^{p^e \alpha}):f^{ r})^{[p^e]}
$$
$$
=(\cI^{j-1}(f^{\alpha})^{[p^e]}:f^{ r})^{[p^e]}
=\cI^j (f^{\alpha})^{[p^e]}.
$$
\end{proof}

\begin{Prop}
$\FJac{p\alpha}{f} = \FJac{\alpha}{f}^{[p]}.$
\end{Prop}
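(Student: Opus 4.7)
The plan is to work directly with the $F$-flags of ideals, using the representations $\alpha=\frac{r}{p^{e}-1}$ and $p\alpha=\frac{pr}{p^{e}-1}$, so that both flags share the Frobenius exponent $p^{e}$. By Proposition \ref{FJacLimitFlag}, $\FJac{\alpha}{f}$ and $\FJac{p\alpha}{f}$ are the stable values of $(\cI^{j}(f^{\alpha}))_{j}$ and $(\cI^{j}(f^{p\alpha}))_{j}$, respectively. The key observation is that the shifted sequence $\cJ^{j}:=\cI^{j}(f^{\alpha})^{[p]}$ satisfies the same recursion as $\cI^{j}(f^{p\alpha})$: flatness of Frobenius lets the bracket power commute past the colon ideal, so
$$\cJ^{j+1}=\bigl((\cI^{j}(f^{\alpha}))^{[p^{e}]}:f^{r}\bigr)^{[p]}=\bigl((\cI^{j}(f^{\alpha}))^{[p^{e+1}]}:f^{pr}\bigr)=(\cJ^{j\,[p^{e}]}:f^{pr}).$$
Both sequences therefore obey $X^{j+1}=(X^{j\,[p^{e}]}:f^{pr})$, and the proof reduces to comparing starting values and propagating.

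For the inclusion $\FJac{p\alpha}{f}\subseteq \FJac{\alpha}{f}^{[p]}$, Lemma \ref{Mult p} yields $\cI^{1}(f^{p\alpha})=\tau(f^{p\alpha})\subseteq \tau(f^{\alpha})^{[p]}=\cJ^{1}$, and induction through the common recursion gives $\cI^{j}(f^{p\alpha})\subseteq \cJ^{j}$ for every $j$; passing to stable values gives the inclusion.

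For the reverse inclusion $\FJac{\alpha}{f}^{[p]}\subseteq \FJac{p\alpha}{f}$, I would prove by induction that $\cJ^{j}\subseteq \cI^{j+1}(f^{p\alpha})$; the inductive step is again immediate from the shared recursion. For the base case $j=1$, I need $f^{pr}\tau(f^{\alpha})^{[p]}\subseteq \tau(f^{p\alpha})^{[p^{e}]}$. Applying Skoda's theorem $r$ times and using $r+\alpha=p^{e}\alpha$ rewrites $f^{r}\tau(f^{\alpha})=\tau(f^{\alpha+r})=\tau(f^{p^{e}\alpha})$, so the left-hand side equals $\tau(f^{p^{e}\alpha})^{[p]}$. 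Iterating Lemma \ref{Mult p} exactly $e-1$ times yields the chain
$$\tau(f^{p^{e}\alpha})\subseteq \tau(f^{p^{e-1}\alpha})^{[p]}\subseteq \cdots\subseteq \tau(f^{p\alpha})^{[p^{e-1}]},$$
and taking the $p$th bracket power gives $\tau(f^{p^{e}\alpha})^{[p]}\subseteq \tau(f^{p\alpha})^{[p^{e}]}$, as required. Propagating and passing to the stable value yields $\FJac{\alpha}{f}^{[p]}\subseteq \FJac{p\alpha}{f}$.

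The main obstacle is the base case of the reverse inclusion: it is the only place where the argument is not purely a formal propagation of a common recursion, and it requires the combination of Skoda's theorem (to reinterpret $f^{pr}\tau(f^{\alpha})^{[p]}$ as a test ideal) with a telescoping use of Lemma \ref{Mult p} (to bound $\tau(f^{p^{e}\alpha})$ by a bracket power of $\tau(f^{p\alpha})$). Once both containments are in place, the proposition follows.
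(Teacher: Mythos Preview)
Your proof is correct, but it follows a different route from the paper's.

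The paper first proves the $p^{e}$-step $\FJac{p^{e}\alpha}{f}=\FJac{\alpha}{f}^{[p^{e}]}$ directly from Proposition \ref{LemaPowerP} (the flag identity $\cI^{j}(f^{p^{e}\alpha})=\cI^{j-1}(f^{\alpha})^{[p^{e}]}$, whose base case needs only Skoda, not an iterated use of Lemma \ref{Mult p}). For the single inclusion $\FJac{p\alpha}{f}\subset\FJac{\alpha}{f}^{[p]}$ it invokes $F^{e}$-module theory: since $\FJac{\alpha}{f}^{[p]}=(\FJac{\alpha}{f}^{[p^{e+1}]}:f^{pr})$, this ideal gives a nonzero $F^{e}$-submodule of $M_{p\alpha}$, which must contain $N_{p\alpha}$ by simplicity (Propositions \ref{AlphaFMod} and \ref{AlphaSimpleFMod}). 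The paper then chains these one-sided inclusions,
\[
\FJac{p^{e}\alpha}{f}\subset\FJac{p^{e-1}\alpha}{f}^{[p]}\subset\cdots\subset\FJac{\alpha}{f}^{[p^{e}]}=\FJac{p^{e}\alpha}{f},
\]
and extracts the equality $\FJac{p\alpha}{f}=\FJac{\alpha}{f}^{[p]}$ by faithful flatness of Frobenius.

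Your argument instead stays entirely at the level of the flags with exponent $p^{e}$ for both $\alpha$ and $p\alpha$, and proves both inclusions by sandwiching $\cI^{j}(f^{p\alpha})\subset\cI^{j}(f^{\alpha})^{[p]}\subset\cI^{j+1}(f^{p\alpha})$. The cost is that the base case of the reverse inclusion needs the telescoped chain of Lemma \ref{Mult p} to pass from $\tau(f^{p^{e}\alpha})$ to $\tau(f^{p\alpha})^{[p^{e-1}]}$. The payoff is that you avoid the $F^{e}$-module simplicity argument and the faithful-flatness squeeze entirely; your proof is self-contained in terms of test ideals and colon-ideal manipulations.
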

\begin{proof}
We first note that $\FJac{p^e\alpha}{f} = \FJac{\alpha}{f}^{[p^e]}$ because
$$
\FJac{p^e\alpha}{f}
=\bigcup_{j} \cI^j(f^{p^e\alpha})
=\bigcup_{j} \cI^j(f^{\alpha})^{[p^e]}
=\FJac{\alpha}{f}^{[p^e]}
$$
by Lemma \ref{LemaPowerP}.

In addition, we have that
$\FJac{\alpha}{f}^{[p]}=(\FJac{\alpha}{f}^{[p^{e+1}]}:f^{pr}).$ Then, $\FJac{\alpha}{f}^{[p]}$ defines the $F^e$-submodule of $M_{p\alpha}$
generated by the morphism
$$
\FJac{\alpha}{f}^{[p]}\FDer{f^{pr}} \FJac{\alpha}{f}^{[p^{e+1}]}.
$$

Since $\FJac{p\alpha}{f}$ defines the unique simple $F$-submodule of $M_{p\alpha}$ by Propositions \ref{AlphaFMod} and \ref{AlphaSimpleFMod}, 
$ \FJac{p\alpha}{f}\subset \FJac{\alpha}{f}^{[p]}.$

Combining these two observations we have that
$$
\FJac{p^e\alpha}{f}\subset \FJac{p^{e-1}\alpha}{f}^{[p]} \subset \ldots \subset\FJac{p\alpha}{f}^{[p^{e-1}]}\subset \FJac{\alpha}{f}^{[p^e]}=\FJac{p^e\alpha}{f}.
$$
Hence,
$\FJac{p\alpha}{f}=\FJac{\alpha}{f}^{[p]}$ because the Frobenius map is faithfully flat.
\end{proof}

\section{$F$-Jacobian ideals}\label{SectionRf}
The $F$-Jacobian ideal of an element $f$ is connected with the minimal
$F$-module of the first local cohomology of $R$ supported at $f$.
In this section we define the $F$-Jacobian ideal and deduce some
of its properties.

\subsection{Definition for unique factorization domains}\label{SecFJacUFD}

\begin{Notation}\label{SOFJ-I}
Throughout this section $R$ denotes an $F$-finite regular UFD of characteristic $p>0$ 
such that $R_f/R$ has finite length as a $D(R,\ZZ)$-module 
for every $f\in R.$
\end{Notation}

This hypothesis is satisfied for every $F$-finite regular local ring and for every $F$-finite 
polynomial ring  \cite[Theorem $5.6$]{LyuFMod}.

\begin{Lemma}\label{Intersection}
Let $S$ be a UFD and $f\in S$ be an irreducible element. Then, $N\cap M\neq 0$ for any nonzero $S$-submodules $M,N\subset S_f/S$.
\end{Lemma}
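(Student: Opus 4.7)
The plan is to reduce everything to the ``socle'' of $S_f/S$, namely the submodule annihilated by $f$, and then use that $f$ being irreducible in a UFD makes $(f)$ a prime ideal, so products of elements not divisible by $f$ remain not divisible by $f$.

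First I would pick nonzero representatives. Take $x \in M \setminus \{0\}$ and $y \in N \setminus \{0\}$, and write them as $x = \overline{a/f^m}$ and $y = \overline{b/f^n}$ in $S_f/S$. Because $S$ is a UFD and $f$ is irreducible, we may successively cancel powers of $f$ from the numerator, so without loss of generality $f \nmid a$ and $f \nmid b$, and then necessarily $m, n \geq 1$ since the elements are nonzero in $S_f/S$.

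Next I would push both elements into the socle. Multiplying by appropriate powers of $f$, set $x' = f^{m-1} x = \overline{a/f} \in M$ and $y' = f^{n-1} y = \overline{b/f} \in N$. Both are killed by $f$, and both are nonzero (since $f \nmid a$ and $f \nmid b$ means $a/f, b/f \notin S$). Now form the common element $z := b \cdot x' = \overline{ab/f}$, and observe that we also have $z = a \cdot y' = \overline{ab/f}$. Hence $z \in M \cap N$.

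The only remaining point is that $z \neq 0$, i.e.\ $ab/f \notin S$, which is to say $f \nmid ab$. This is exactly where I would use that $f$ is irreducible in a UFD, hence prime: from $f \nmid a$ and $f \nmid b$ it follows that $f \nmid ab$, so $z$ is a nonzero element of $M \cap N$. There is no real obstacle here; the one subtle point is making sure the cancellation step (passing to $f \nmid a$, $f \nmid b$) is justified, and that is precisely what the UFD/irreducibility hypothesis delivers.
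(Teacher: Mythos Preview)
Your proof is correct and follows essentially the same route as the paper's: both pick representatives $a/f^m\in M$, $b/f^n\in N$ with $f\nmid a,b$, and exhibit $ab/f$ as a nonzero element of $M\cap N$ by writing it as $b\cdot f^{m-1}(a/f^m)=a\cdot f^{n-1}(b/f^n)$ and using primeness of $f$ to get $f\nmid ab$. Your framing via the ``socle'' $\{z:fz=0\}$ is a nice conceptual touch, but the underlying argument is identical.
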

\begin{proof}
Let 
$a/f^\beta\in M\setminus\{0\}$ and $b/f^\gamma\in N\setminus\{0\}$, where 
$\beta,\gamma\geq 1$.
Since $S$ is a UFD and $f$ is irreducible, we may assume that $\gcd(a,f)=\gcd(b,f)=1$. Then, $\gcd(ab,f)=1$, and so
$ab/f\neq 0$ in $S_f/S.$ We have that $ab/f=bf^{\beta-1}(a/f^\beta)=af^{\gamma-1}(b/f^\gamma)\neq 0$. Then, $ab/f\in N\cap M.$. 
\end{proof}

\begin{Lemma}\label{Bijection}
Let $S$ be a regular ring of characteristic $p>0$, $f\in S$ an element and $\pi:S\to S/fS$ be the quotient morphism. Let 
$$\cI:\{I\subset S \mid I\hbox{ is an ideal}, f\in I, (I^{[p]}:f^{p-1})=I\}$$
and 
$$\cN=\{N\subset S_f/S | N\hbox{ is an }F\hbox{-submodule}\}.$$
Then, the correspondence given by  sending $N$ to $I_N=\pi^{-1}(N\cap R/fR)$ is bijective, with inverse defined by sending the ideal $I\in\cI$ to the 
$F$-module $N_I$ generated by
$I/fS\FDer{f^{p-1}} F(I/fS)= I^{[p]}/f^pS .$
\end{Lemma}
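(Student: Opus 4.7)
The plan is to build the bijection on top of the general correspondence (cf.\ \cite[Corollary $2.6$]{LyuFMod}) between $F^e$-submodules of $S_f/S$ and ideals satisfying the Frobenius-containment condition recalled just before this lemma, and then identify $\cI$ as the canonical (``saturated'') representatives obtained by intersecting with the copy of $S/fS\inj S_f/S$ given by $a+fS\mapsto a/f + S$.

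First I would check that both assignments are well-defined. For $I\in\cI$, the containment $f^{p-1}I\subset I^{[p]}$ together with $f^pS\subset I^{[p]}$ (which follows from $f\in I$) guarantees that multiplication by $f^{p-1}$ descends to an $S$-linear root map $I/fS\FDer{f^{p-1}}I^{[p]}/f^pS=F(I/fS)$, so the colimit $N_I$ is a genuine $F^e$-submodule of $S_f/S$. For an $F$-submodule $N$, the ideal $I_N=\pi^{-1}(N\cap S/fS)$ trivially contains $f$; the saturation equality $(I_N^{[p]}:f^{p-1})=I_N$ will fall out of the explicit description of $N\cap S/fS$ given next.

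Next I would compute $N\cap S/fS$ explicitly. Writing $N=N_J$ for some root $J\supset fS$ with $f^{p-1}J\subset J^{[p]}$, and identifying $J^{[p^n]}/f^{p^n}S$ with its image in $S_f/S$ via $a+f^{p^n}S\mapsto a/f^{p^n}+S$, the condition that $a/f^{p^n}+S$ lie in the first layer $S/fS$ is precisely $f^{p^n-1}\mid a$. This yields
\[
N_J\cap S/fS \;=\; \Bigl(\bigcup_{n\geq 0}(J^{[p^n]}:f^{p^n-1})\Bigr)\Big/fS,
\]
an ascending chain of ideals (ascending follows from $(f^{p-1})^{p^n}J^{[p^n]}\subset J^{[p^{n+1}]}$, a direct consequence of $f^{p-1}J\subset J^{[p]}$) that stabilizes at $(J^{[p^N]}:f^{p^N-1})$ for $N\gg 0$. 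Using flatness of Frobenius together with the factorization $f^{p^{N+1}-1}=(f^{p^N-1})^p\cdot f^{p-1}$,
\[
\bigl((J^{[p^N]}:f^{p^N-1})^{[p]}:f^{p-1}\bigr)=(J^{[p^{N+1}]}:f^{p^{N+1}-1})=I_N,
\]
so $I_N\in\cI$.

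Finally I would verify the two compositions. For $I\in\cI$, an induction on $n$ using $(I^{[p]}:f^{p-1})=I$ and the same factorization $f^{p^{n+1}-1}=(f^{p^n-1})^p\cdot f^{p-1}$ together with flatness of Frobenius gives $(I^{[p^n]}:f^{p^n-1})=I$ for every $n$, so the intersection formula above yields $I_{N_I}=I$. For the reverse direction, both $N=N_J$ and $N_{I_N}$ contain the root $J/fS$ (the latter because $J\subset I_N$ by construction), and by Lemma~\ref{GeneratesSame} applied with $r=p-1$ and $e=1$ they coincide provided $f^{p^\ell-1}I_N\subset J^{[p^\ell]}$ for some $\ell$, which holds by the very definition of $I_N$ as soon as $\ell$ is large enough for the ascending chain to have stabilized. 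The main technical wrinkle is the careful identification of elements of the colimit defining $N_J$ with explicit fractions $a/f^{p^n}\in S_f/S$ and the attendant colon-ideal bookkeeping; beyond Lyubeznik's correspondence and Lemma~\ref{GeneratesSame} no deeper input is needed.
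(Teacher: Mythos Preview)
Your proof is correct and follows the same approach as the paper, building on Lyubeznik's correspondence \cite[Corollary~2.6]{LyuFMod}. The paper's argument is more condensed: it invokes Lyubeznik's result in the form that $F$-submodules of $S_f/S$ biject with ideals $J\subset S/fS$ satisfying $\phi^{-1}(F(J))=J$ for the root $\phi\colon S/fS\xrightarrow{f^{p-1}}S/f^pS$, and then simply computes that for $J=I/fS$ this condition reads $(I^{[p]}:f^{p-1})=I$, whereas you re-derive the saturation process and verify both compositions explicitly.
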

\begin{proof}
Since $\phi:R/fR\FDer{f^{p-1}}R/f^pR$ is a root for $R_f/R$, its $F$-submodules are in correspondence with ideals $J\subset R/fR$ such that
$\phi^{-1}(F(J))= J$  \cite[Corollary $2.6$]{LyuFMod}. We have the following generating morphisms:

\begin{figure}[h!]
$$
\begin{array}{*{18}{c@{\,}}}

0& & 0 & &0&&& \\
\downarrow & & \downarrow & & \downarrow  &&& \\
I/fR & \FDer{f^{p-1}} &  F(IR/fR)&\FDer{f^{p^2-p}}& F^2(IR/fR) & \FDer{f^{p^3-p^2}}&\ldots \\
\downarrow  &  & \downarrow && \downarrow  & &\\
R/fR & \FDer{f^{p-1}} &  R/f^pR&\FDer{f^{p^2-p}} & R/f^{p^2} R &\FDer{f^{p^3-p^2}} &\ldots\\
\downarrow  &  & \downarrow && \downarrow  & &\\
R/I & \FDer{f^{p-1}}  & R/I^{[p]} & \FDer{f^{p^2-p}} & R/I^{[p^2]} & \FDer{f^{p^3-p^2}} &\ldots \\
\downarrow  &  & \downarrow && \downarrow  & &\\
0 && 0 && 0 &&&
\end{array}
$$
\end{figure}

Since $J$ is a quotient $I/fR$ of an ideal, $F(J)=I^{[p]}/f^pR$. Then,
\begin{align*}
I/fR&=\phi^{-1}(I^{[p]}/f^p)\\
&=\{h\in R/fR \mid f^{p-1}h\in I^{[p]}/f^p\}\\
&=\{h\in R \mid f^{p-1}h\in I^{[p]}\}/fR\\
&=(I^{[p]}:f^{p-1})/f\\
\end{align*}
and the result follows.
\end{proof}
\begin{Lemma}\label{WellDefMod}
Let $f\in R$ be an irreducible element. 
Then, there is a unique simple $F$-module in $R_f/R$.
\end{Lemma}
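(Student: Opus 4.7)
The plan is to combine Lemma \ref{Intersection} with the abelian category structure of $F$-modules. First, I would verify that a simple $F$-submodule of $R_f/R$ exists at all. By hypothesis, $R_f/R$ has finite length as a $D(R,\ZZ)$-module, and we recalled in Section \ref{SecFMod} that this forces $R_f/R$ to have finite length as an $F$-module. Any module of finite length contains at least one simple submodule, so the set of simple $F$-submodules of $R_f/R$ is nonempty.

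Next I would argue uniqueness. Let $N_1, N_2 \subset R_f/R$ be two simple $F$-submodules. Since every $F$-submodule is in particular an $R$-submodule, and since $R$ is a UFD and $f$ is irreducible, Lemma \ref{Intersection} applied to $S=R$ gives $N_1 \cap N_2 \neq 0$. On the other hand, $F$-modules form an abelian category (recalled in Section \ref{SecFMod}), so $N_1 \cap N_2$, being the kernel of the difference map into $R_f/R$ restricted appropriately, is an $F$-submodule of both $N_1$ and $N_2$. By simplicity, a nonzero $F$-submodule of a simple $F$-module equals the whole module, hence
\[
N_1 = N_1 \cap N_2 = N_2.
\]

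The argument is quite short; the only conceptual point is to recognize that the two ingredients from the preliminaries — intersection of $R$-submodules of $S_f/S$ when $S$ is a UFD and $f$ is irreducible, and the fact that intersections of $F$-submodules are again $F$-submodules — combine immediately to give uniqueness. There is no serious obstacle; the lemma is a bridge statement that packages Lemma \ref{Intersection} into the $F$-module setting so it can be used in subsequent constructions (such as the minimal $F$-module appearing in the definition of the $F$-Jacobian ideal for UFDs).
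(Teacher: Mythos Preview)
Your proof is correct and follows essentially the same approach as the paper: existence from finite length, then uniqueness via Lemma \ref{Intersection} plus simplicity. The only minor difference is that the paper fixes one simple $F$-submodule $M$ and shows $M\subset N$ for \emph{every} nonzero $F$-submodule $N$ (a slightly stronger conclusion used later), whereas you compare two simple submodules directly; the underlying argument is the same.
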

\begin{proof}
Since $R_f/R$ is an $F$-module of finite length, there exists a simple $F$-submodule $M\subset R_f/R$.  
Let $N$ be an $F$-submodule
of $R_f/R$. Since $M\cap N\neq 0$ by Lemma \ref{Intersection} and $M$ is a simple $F$-module, $M=M\cap N$. Hence, $M$ is the only nonzero simple $F$-submodule of 
$R_f/R$.
\end{proof}

\begin{Prop}\label{WellDefIdeal}
Let $g\in R$ be an irreducible element and $f=g^n$ for some integer $n\geq 1$.
Then there exists a unique ideal 
$I\subset R$ such that:
\begin{itemize}
\item[\rm{(i)}] $f\in I$,
\item[\rm{(ii)}] $I\neq f R$,
\item[\rm{(iii)}] $(I^{[p]}:f^{p-1})=I$, and
\item[\rm{(iv)}] $I$ is contained in any other ideal satisfying \rm{(i),(ii)} and \rm{(iii).} 
\end{itemize}
\end{Prop}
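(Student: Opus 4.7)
The plan is to reduce to Lemma \ref{Bijection} and Lemma \ref{WellDefMod}, exploiting the fact that $f=g^n$ sees the same localization as $g$.

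First, I would observe that since $f=g^n$ and $g$ is irreducible in $R$, one has $R_f=R_g$ as subrings of $\Frac(R)$; in particular $R_f/R=R_g/R$ as $R$-modules, and since the Frobenius structure is inherited from $R_f$, they agree as $F$-modules as well. Applying Lemma \ref{WellDefMod} to the irreducible element $g$, I obtain a unique simple $F$-submodule $M\subset R_g/R=R_f/R$.

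Next, I would invoke the bijection of Lemma \ref{Bijection} applied to the element $f$. Letting $\pi:R\to R/fR$ be the projection, I define
$$I:=\pi^{-1}(M\cap R/fR).$$
By construction $f\in I$ and $(I^{[p]}:f^{p-1})=I$, giving (i) and (iii). For (ii), I must show $M\cap R/fR\neq 0$. Both $M$ and the image of $R/fR$ in $R_f/R$ are nonzero $R$-submodules of $R_f/R=R_g/R$, and since $g$ is irreducible in the UFD $R$, Lemma \ref{Intersection} guarantees that they intersect nontrivially; hence $I\neq fR$.

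For (iv), the key point is that the bijection of Lemma \ref{Bijection} is inclusion-preserving: if $N\subseteq N'$ are $F$-submodules, then $I_N=\pi^{-1}(N\cap R/fR)\subseteq \pi^{-1}(N'\cap R/fR)=I_{N'}$. Given any ideal $J$ satisfying (i), (ii), and (iii), the corresponding $F$-submodule $N_J$ is nonzero (since $J\neq fR$), so by the uniqueness of the simple submodule $M\subseteq N_J$, which by the bijection yields $I\subseteq J$. Uniqueness of $I$ is then automatic: any two ideals satisfying (i)--(iv) must each contain the other.

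The main obstacle is really conceptual rather than computational: one needs to notice the identification $R_f/R=R_g/R$ so that the uniqueness of the simple $F$-submodule, proved in Lemma \ref{WellDefMod} only for irreducible elements, transfers to the case $f=g^n$. Once this identification is made, the existence, the correspondence with the unique simple $F$-submodule, and the minimality are all formal consequences of Lemmas \ref{Intersection}, \ref{Bijection}, and \ref{WellDefMod}.
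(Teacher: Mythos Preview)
Your proposal is correct and follows essentially the same approach as the paper: both identify $R_f/R=R_g/R$, invoke Lemma \ref{WellDefMod} to obtain the unique simple $F$-submodule, and then read off properties (i)--(iv) from the bijection of Lemma \ref{Bijection}. Your write-up simply spells out the details (in particular the use of Lemma \ref{Intersection} for (ii) and the inclusion-preserving nature of the bijection for (iv)) that the paper leaves implicit in its one-line ``it is clear from Lemma \ref{Bijection}.''
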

\begin{proof}
We note that $R_f/R=R_g/R.$
Let $I$ be the ideal corresponding to the minimal simple $F$-submodule in given in Lemma \ref{WellDefMod}
under the bijection in Lemma \ref{Bijection}. Then,
it is clear from Lemma \ref{Bijection} that $I$ satisfies (i)-(iv).
\end{proof}
\begin{Def}
Let  $g\in R$ be an irreducible element and $f=g^n$ for some integer $n\geq 1$. 
We denote  the minimal  simple submodule of $R_f/R$ by  $\Min_{F_R}(f)$, and we called it the \emph{minimal $F$-module of $f$}.
Let $\sigma: R/fR\to R_f/R$ be the morphism defined by $\sigma([a])=a/f$ which is well defined because $R$ is a domain. 
Since image of $\sigma$ is $R\frac{1}{f},$ we abuse notation and consider $R/fR\subset R_f/R.$
We denote $(\phi\sigma)^{-1} (\Min_F(f)\cap R\frac{1}{f})$ by $J_F(f)$, and we call it the \emph{$F$-Jacobian ideal of $f$.}
If $f$ is a unit, we take $\Min_F(f)=0$ and $J_F(f)=R.$
\end{Def}
\begin{Notation}
If it is clear in which ring we are working, we write  $J_F(f)$ instead of  $J_{F_R}(f)$ and $\Min_{F}(f)$ instead of $\Min_{F_R}(f).$
\end{Notation}

\begin{Prop}\label{DModFMod}
Let  $f\in R$ be an irreducible element.
Then $\Min_F(f)$ is the only simple $D$-submodule of $R_f/R$.
\end{Prop}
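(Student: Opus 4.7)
The plan is two-fold: first establish that every simple $D$-submodule of $R_f/R$ is contained in $\Min_{F}(f)$, and then show that $\Min_{F}(f)$ is itself simple as a $D$-module; uniqueness of the simple $D$-submodule will be an immediate consequence of these two assertions.

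For the first assertion, let $N \subseteq R_f/R$ be a simple $D$-submodule. By Lemma \ref{Intersection} (applied to the irreducible element $f$), $N \cap \Min_{F}(f) \neq 0$; this intersection is a $D$-submodule of the simple $D$-module $N$, so $N = N \cap \Min_{F}(f) \subseteq \Min_{F}(f)$.

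For the second assertion, I would mirror the argument of Corollary \ref{AlphaSimpleDMod} and reduce to the local case. The construction of $\Min_{F_R}(f)$ commutes with localization: under the bijection of Lemma \ref{Bijection}, $\Min_{F_R}(f)$ corresponds to the minimal ideal $I \in \cI$ strictly above $fR$, and the defining condition $(I^{[p]}:f^{p-1}) = I$ is preserved by the flat base change $R \to R_m$. Thus $(\Min_{F_R}(f))_m = \Min_{F_{R_m}}(f)$ for every maximal ideal $m \supset fR$, and it suffices to prove simplicity over $R_m$. In the local setting, $\Min_{F_{R_m}}(f)$ is a simple $F$-finite $F$-module, which by Lyubeznik's structure theorem \cite[Theorem $5.6$]{LyuFMod} decomposes as a finite direct sum of simple $D_{R_m}$-modules; Lemma \ref{Intersection} applied in $R_m$ forces any two nonzero $R$-submodules of $(R_m)_f/R_m$ to intersect nontrivially, so this direct sum must consist of a single summand, yielding simplicity. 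The main obstacle is verifying the localization commutation---checking that the minimal ideal above $fR$ in $\cI$ truly matches up under $R \to R_m$ and that a proper $D$-submodule of $\Min_{F_R}(f)$ descends to a proper $D_{R_m}$-submodule at some $m \supset fR$---but this follows from the flatness of Frobenius together with standard behavior of colon ideals under flat base change, in the same spirit as Propositions \ref{FJacAlphaLoc} and \ref{FJacAlphaComplete}.
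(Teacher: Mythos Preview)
Your first assertion and the local endgame (Lyubeznik's decomposition plus Lemma~\ref{Intersection}) are correct and match the paper. The gap is in the reduction to the local case. You need $(\Min_{F_R}(f))_m = \Min_{F_{R_m}}(f)$, but flat base change of the colon condition $(I^{[p]}:f^{p-1})=I$ only shows that $J_{F_R}(f)R_m$ lies in $\cI$ over $R_m$; it says nothing about \emph{minimality} there. Your appeal to Propositions~\ref{FJacAlphaLoc} and~\ref{FJacAlphaComplete} is not apt: those succeed because $\FJac{\alpha}{f}$ is computed by an explicit iterative formula starting from $\tau(f^\alpha)$, and test ideals are already known to localize. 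No such formula for $J_F(f)$ is available at this point in the paper, and in fact the paper's own proof that $\Min_F$ commutes with localization (Proposition~\ref{F-JacLocalization}) relies on Proposition~\ref{DModFMod}, so invoking that compatibility here is circular.

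The paper avoids this by reversing the roles: rather than localizing $\Min_{F_R}(f)$, it localizes the unique simple $D$-submodule $M$. The crucial asymmetry is the general fact from Section~\ref{SecDMod} that a simple $D_R$-module localizes to a simple $D_{W^{-1}R}$-module or zero---available for $M$, but not a priori for $\Min_{F_R}(f)$, which is only known to be $F$-simple. After a support argument showing $M_m\neq 0$ whenever $f\in m$, one obtains that $M_m$ is the unique simple $D_{R_m}$-submodule of $(R_m)_f/R_m$, hence equals $\Min_{F_{R_m}}(f)$ by your local argument. The paper then globalizes via Lemma~\ref{Bijection}: the ideal $I=\pi^{-1}(M\cap R/fR)$ satisfies $(I^{[p]}:f^{p-1})=I$ at every maximal ideal, hence globally, so $M$ coincides with the $F$-submodule $N_I$ it determines, forcing $M=\Min_{F_R}(f)$.
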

\begin{proof}
We claim that $R_f/R$ has only one simple $D_R$-module. Since $R_f/R$ has finite length as a $D$-module, there is a  simple $D$-submodule, $M$. 
It suffices to show that for any other $D_R$-submodule, $N\subset M$. We have that $M\cap N \neq 0$ by Lemma \ref{Intersection}, and so $M=M\cap N\subset N$
because $M$ is a simple $D_R$-module.
Since $\Min_{F_R}(f)$ is an $D_R$-module  \cite[Examples $5.1$ and $5.2$]{LyuFMod}, we have that $M\subset \Min_F(f).$

It suffices to prove that $M$ is an $F$-submodule of $R_f/R.$ 
Since $R/fR$ is a domain, we have that the localization morphism,  $R/fR\to R_m/fR_m,$ is injective. 
Then, 
$
\Supp_R(R/fR)=\Supp_R(J) 
$
for every nonzero ideal $J\subset R/fR$. Then, 
$$\Supp_R(R/fR)=\Supp_R(R/fR\cap N)\subset\Supp_R(N)\subset\Supp_R(R_f/R)=\Supp_R(R/f)$$
for every $R$-submodule of $R_f/R$ by Lemma \ref{Intersection}.
Let $m$ denote a maximal ideal such that $f\in m.$ Thus,
$M_m\neq 0,$ and then, $M_m$ is the only simple $D_{R_m}$-module of $(R_f)_m/R_m.$
Since $R_m$ is a regular local $F$-finite ring, we have that $\Min_{F_{R_m}}(f)$ is a finite direct sum of
simple $D_{R_m}$-modules  \cite[Theorem $5.6$]{LyuFMod}. Therefore, $M_m=\Min_{F_{R_m}}(f)$ by Lemma \ref{Intersection}.

Let $\pi:R\to R/fR$ denote the quotient morphism, and $I=\pi^{-1}(R/fR\cap M)$. We note that
$I\neq fR$ because $R/f\cap M\neq 0$ by Lemma \ref{Intersection}. 
We claim that $(I^{[p]}_m:f)=I_m$ for every maximal ideal. If $f\in m$, 
$$I_m/f=(R_m/fR_m)\cap M_m = (R_m/fR_m)\cap \Min_{F_{R_m}}(f)=J_{F_{R_m}}(f)/f;$$ otherwise,
$I_m=R_m=J_{F_{R_m}}(f)$ because $f$ is a unit in $R_m$
Then, $(I^{[p]} :f^{p-1})=I$ and so $I$ corresponds to an $F_R$-submodule of $R_f/R$, $N_I$ by Lemma \ref{Bijection}. 
Moreover, 
$$
N_I=\lim\limits_\to (I/fR\FDer{f^{p-1}}I^{[p]}/f^pR\FDer{f^{p^2-p}}\ldots).
$$
Since localization commutes with direct limit, we have that for every maximal ideal such that $f\in m$,
$$
M_m=\Min_{F_{R_m}}(f)=\lim\limits_\to (I_m/fR_m\FDer{f^{p-1}}I^{[p]}_m/f^pR_m\FDer{f^{p^2-p}}\ldots)=N_{I_m}=N_I\otimes_R R_m.
$$
Therefore,
$M=N_I$ because $\Supp_R(M)=\Supp_R(R/f)$, and it is an $F$-submodule of $R_f/R.$ Hence, $M=\Min_{F_R}(f).$
\end{proof}
\begin{Rem}If $f\in R$ is an irreducible element, then:
\begin{itemize}
\item[(i)] $\Min(f)=\Min(f^n)$ for every $n\in\NN$ because $R_{f^n}/R=R_f/R,$
\item[(ii)] $J_F(f)$ is the minimal of  the family of ideals $I$ containing properly $fR$ such that $(I:f^{p-1})=I$ by Proposition \ref{WellDefIdeal}.
\item[(iii)] $J_F(f)$ is not the usual Jacobian ideal of $f$. If $S=\FF_3[x,y,z,w]$ and $f=xy+zw,$ we have that the Jacobian of $f$ is  $m=(x,y,z,w)S.$
However, $m\neq (m^{[p]}:f^2)$.
\item[(iv)] $J_F(f)=R$ if and only if $R_f/R$ is a simple $F$-module by the proof of Proposition \ref{WellDefIdeal} and Lemma \ref{Bijection}. 
\item[(v)] $J_F(f)=R$ if and only if $R_f/R$ is a simple $D_R$-module
by Proposition \ref{DModFMod}. 
\end{itemize}
\end{Rem}

\begin{Prop}\label{MinFModFactors}
Let $f_i,\ldots f_\ell\in R$ be irreducible relatively prime elements and $f=f_1\cdots f_\ell$.
Then $\Min_F(f_i)$ is an $F$-submodule of $R_f/R$.
Moreover, all the simple $F$-submodules of $R_f/R$ are $\Min_F(f_1),\ldots,\Min_F(f_\ell).$  
\end{Prop}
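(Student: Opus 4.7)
The plan is to verify the two assertions in turn. For the first, each $\Min_F(f_i)$ is already an $F_R$-submodule of $R_{f_i}/R$ by Lemma~\ref{WellDefMod}, and since $f_i\mid f$, the localization map $R_{f_i}\hookrightarrow R_f$ is an injection of $F_R$-modules. Quotienting by $R$ gives an $F_R$-linear inclusion $R_{f_i}/R\hookrightarrow R_f/R$, so $\Min_F(f_i)$ sits inside $R_f/R$ as an $F_R$-submodule and remains simple there, since its underlying $R$-module and $F_R$-structure are unchanged. Moreover, since $R$ is a UFD and the $f_i$ are pairwise coprime, $R_{f_i}\cap R_{f_j}=R$ inside $R_f$ whenever $i\neq j$, which via the exact sequences $0\to R\to R_{f_k}\to R_{f_k}/R\to 0$ gives $R_{f_i}/R\cap R_{f_j}/R=0$ in $R_f/R$. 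Hence $\Min_F(f_i)\cap\Min_F(f_j)=0$ for $i\neq j$, and the $\ell$ candidates are pairwise distinct simple $F_R$-submodules.

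For the second assertion, let $M\subseteq R_f/R$ be any simple $F_R$-submodule. For each $i$, the intersection $M\cap R_{f_i}/R$ is an $F_R$-submodule of $M$, so by simplicity it equals $0$ or $M$. If $M\subseteq R_{f_i}/R$ for some $i$, then $M$ is a simple $F_R$-submodule of $R_{f_i}/R$ and Lemma~\ref{WellDefMod} forces $M=\Min_F(f_i)$. The substance of the proof is therefore to show $M\cap R_{f_i}/R\neq 0$ for at least one $i$.

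Here I would use a direct Frobenius argument. Pick any nonzero $m\in M$, lift to a reduced fraction $\tilde m=a/\prod_i f_i^{n_i}\in R_f$ with $\gcd(a,f_i)=1$ whenever $n_i>0$, and let $i_0$ be any index with $n_{i_0}>0$ (such an index exists because $m\neq 0$, and the set $\{i:n_i>0\}$ is independent of the lift since replacing $\tilde m$ by $\tilde m+r$ leaves the numerator congruent to $a$ modulo every $f_i$ with $n_i>0$). By Lemma~\ref{Bijection}, $M$ admits the description $\bigcup_k\{b/f^{p^k}\bmod R:b\in I^{[p^k]}\}$ for its corresponding ideal $I$, so $M$ is closed under the $p$-th power map of $R_f$ acting on $R_f/R$. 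Applying this map $e$ times to $\tilde m$ gives $a^{p^e}/\prod_i f_i^{n_i p^e}\bmod R\in M$, and multiplying by the element $\prod_{i\neq i_0}f_i^{n_i p^e}\in R$ keeps us inside $M$ and produces
\[
\frac{a^{p^e}}{f_{i_0}^{n_{i_0}p^e}}\bmod R\in M,
\]
which lies in $R_{f_{i_0}}/R$ and is nonzero because $\gcd(a,f_{i_0})=1$. This contradicts $M\cap R_{f_{i_0}}/R=0$, giving $M=\Min_F(f_{i_0})$.

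The main obstacle is this last step, ruling out $M\cap R_{f_i}/R=0$ for every $i$. Executing the Frobenius manipulation rigorously reduces to two small but essential points: that the support $\{i:n_i>0\}$ of $m\in R_f/R$ is well defined independently of the lift (elementary UFD arithmetic, since modulo each $f_i$ appearing in the denominator the numerator is forced to remain coprime to $f_i$), and that the $p$-th power map of $R_f$ preserves every $F_R$-submodule of $R_f/R$ (which follows at once from the explicit form of $F_R$-submodules given by Lemma~\ref{Bijection}, via $b\in I^{[p^k]}\Rightarrow b^p\in I^{[p^{k+1}]}$).
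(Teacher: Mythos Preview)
Your argument is correct, but the Frobenius detour in the second half is unnecessary and obscures what is really a one-line $R$-module computation. Once you have a nonzero class $m\in M$ represented by $a/\prod_i f_i^{n_i}$ with $\gcd(a,f_{i_0})=1$ and $n_{i_0}>0$, you can simply multiply by $\prod_{i\neq i_0}f_i^{n_i}\in R$; since $M$ is an $R$-submodule this stays in $M$, and the result $a/f_{i_0}^{n_{i_0}}$ is a nonzero element of $R_{f_{i_0}}/R$. No appeal to the $p$-th power map, and no invocation of the explicit description of $M$ via Lemma~\ref{Bijection}, is needed. This is exactly what the paper does: it takes a nonzero element of an arbitrary $F$-submodule $N$, clears all but one factor from the denominator by multiplying by a suitable element of $R$, and lands in $N\cap R_{f_i}/R$. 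In fact the paper's version proves the slightly stronger statement that \emph{every} nonzero $F$-submodule of $R_f/R$ (not just the simple ones) contains some $\Min_F(f_i)$.

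Your justification that $M$ is closed under the $p$-th power map is fine (the description $M=\bigcup_k\{b/f^{p^k}\bmod R:b\in I^{[p^k]}\}$ and the implication $b\in I^{[p^k]}\Rightarrow b^p\in I^{[p^{k+1}]}$ are both correct), so nothing is wrong; it is just machinery you do not need here. The moral is that the key step uses only that $M$ is an $R$-submodule together with UFD arithmetic, not its $F$-structure.
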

\begin{proof}
The morphism $R_{f_i}/R\to R_{f}/R$, induced by the localization map $R_{f_1}\to R_f$, is a morphism of $F$-finite $F$-modules given by the diagram:
$$
\xymatrix{
0\ar[d] &  0\ar[d] & 0\ar[d] \\
R/f_iR \ar[r]^{f^{p-1}_i} \ar[d]^{f^p_1\ldots \hat{f}^p_{i}\cdots f^p_{\ell}} &  R/f^{p}_i R \ar[r]^{f^{p^2-p}_i} \ar[d]^{f^{p^2}_1\ldots \hat{f}^{p^2}_{i}\cdots f^{p^2}_{\ell}}  & R/f^{p^2}_i R  
\ar[d]^{f^{p^3}_1\ldots \hat{f}^{p^3}_{i}\cdots f^{p^3}_{\ell}} \ar[r]^{\quad \hbox{ } f^{p^3-p^2}_i}& \ldots \\
R/fR  \ar[r]^{f^{p-1}} &  R/f^pR   \ar[r]^{f^{p^2-p}} & R/f^{p^2} R \ar[r]^{\quad\hbox{ } f^{p^3-p^2}} &\ldots
}
$$
Then $\Min_F(f_i)$ is a simple $F$-submodule of $R_f/R$. 
Let $N$ be an $F$-submodule of $R_f/R,$ and  
$a/f^{\beta_1}_1\cdots f^{\beta_\ell}_\ell\in N\setminus \{0\}.$  Since $f_i$ is irreducible, we may assume that $\gcd(a,f_i)=1$ and  $\beta_i\neq 0$ 
for some $i=1\ldots,\ell.$
Thus, $a/f_i\in N\cap R_{f_i}/R$ and $a/f_i\neq 0.$ Then, $\Min_F(f_i)\subset N\cap R_{f_i}/R\subset N.$
In particular, if $N$ is a simple $F$-submodule, then $N=\Min_F(f_i).$ 
\end{proof}

\begin{Rem}\label{SumMin}
As a consequence of Lemma \ref{MinFModFactors}, we have that 
$$\Min_F(f_1)\oplus \ldots \oplus \Min_F(f_\ell)\in R_f/R$$
because $R_g\cap R_h=R$ for all elements $g,h\in R$ such that $\gcd(g,h)=1.$
\end{Rem}

\begin{Def}\label{F-Jac Def}
Let 
$f_i,\ldots f_\ell\in R$ be irreducible relatively prime elements, $f=f^{\beta_1}_1\cdots f^{\beta_\ell}_\ell$, and $\pi:R\to R/fR$ be the quotient morphism.
We define $\Min_F(f)$ by $$\Min_F(f_1)\oplus \ldots \oplus \Min_F(f_\ell),$$ and we called it the \emph{minimal $F$-module of $f$}.
Let $\sigma: R/fR\to R_f/R$ be the morphism defined by $\sigma([a])=a/f$ which is well defined because $R$ is a domain. 
Since image of $\sigma$ is $R\frac{1}{f},$ we will abuse notation and consider $R/fR\subset R_f/R.$
We denote $(\phi\sigma)^{-1} (\Min_F(f)\cap R\frac{1}{f})$ by $J_F(f)$, and we call it the \emph{$F$-Jacobian ideal of $f$.}
\end{Def}
\begin{Rem}
In the local case, $\Min_F(f)$ is the intersection homology $D$-modules $\cL(R/f,R)$ previously defined by Blickle   \cite[Theorem $4.5$]{Manuel}.  
\end{Rem}

\begin{Prop}\label{F-JacFactors}
Let  $f,g\in R$ be relatively prime elements. Then, 
$$J_F(fg)=fJ_F(g)+gJ_F(f).$$ Moreover, $fJ_F(g)\cap gJ_F(f)=fg R.$
\end{Prop}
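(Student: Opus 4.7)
The plan is to translate both sides through the definition of the $F$-Jacobian ideal and then work inside $R_{fg}/R$, exploiting the UFD hypothesis. By Definition \ref{F-Jac Def}, $\Min_F(fg) = \Min_F(f) \oplus \Min_F(g)$ sits inside $R_{fg}/R$, with $\Min_F(f) \subset R_f/R$ and $\Min_F(g) \subset R_g/R$. Under the inclusion $R \cdot \tfrac{1}{f} \hookrightarrow R \cdot \tfrac{1}{fg}$ given by $a/f = ag/(fg)$, the submodule $J_F(f)\cdot\tfrac{1}{f}$ corresponds to $gJ_F(f)\cdot\tfrac{1}{fg}$, and symmetrically for $g$. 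Pulling back along $\sigma_{fg}$, the identity $J_F(fg) = fJ_F(g) + gJ_F(f)$ is equivalent to
\[
\Min_F(fg) \cap R\cdot\tfrac{1}{fg} \;=\; \bigl(\Min_F(f) \cap R\cdot\tfrac{1}{f}\bigr) \;+\; \bigl(\Min_F(g) \cap R\cdot\tfrac{1}{g}\bigr).
\]

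The containment $\supseteq$ is immediate from the inclusions above. The hard containment $\subseteq$ reduces to the following UFD lemma, which I would verify by a direct divisibility argument: if $a/(fg) = m_f + m_g$ in $R_{fg}/R$ with $m_f \in R_f/R$ and $m_g \in R_g/R$, then $m_f \in R\cdot\tfrac{1}{f}$ and $m_g \in R\cdot\tfrac{1}{g}$. Choosing lifts $m_f = b/f^K$ and $m_g = c/g^L$ and clearing denominators, the relation modulo $R$ gives $f^K g^L \mid a f^{K-1} g^{L-1} - b g^L - c f^K$; using $\gcd(f,g) = 1$ (in the UFD $R$) and induction on $K$, one peels factors of $f$ off $b$ until $f^{K-1} \mid b$, so $m_f$ is representable over $f$ alone, and symmetrically for $g$. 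Applying this to any $x \in \Min_F(fg) \cap R\cdot\tfrac{1}{fg}$ with its unique decomposition $x = m_f + m_g$ coming from the direct sum above yields $m_f \in \Min_F(f) \cap R\cdot\tfrac{1}{f}$ and $m_g \in \Min_F(g) \cap R\cdot\tfrac{1}{g}$, which closes the $\subseteq$ containment and establishes $J_F(fg) = fJ_F(g) + gJ_F(f)$.

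For the intersection formula, $fgR \subset fJ_F(g) \cap gJ_F(f)$ is immediate from $fR \subset J_F(f)$ and $gR \subset J_F(g)$. Conversely, any element of the intersection has the form $fy = gz$ with $y \in J_F(g)$, $z \in J_F(f)$; since $\gcd(f,g) = 1$ in the UFD $R$, this forces $g \mid y$, hence the element lies in $fgR$. Equivalently, this reflects the directness of the sum in the displayed identity above, since $\Min_F(f) \cap \Min_F(g) \subset R_f/R \cap R_g/R = 0$ (another standard UFD fact). The principal obstacle is the UFD divisibility lemma extracting the component of a $R\cdot\tfrac{1}{fg}$-element that lies in $R\cdot\tfrac{1}{f}$; everything else is formal bookkeeping with the definitions.
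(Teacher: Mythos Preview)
Your argument is correct and takes a genuinely different route from the paper. The paper establishes the hard containment $J_F(fg)\subset fJ_F(g)+gJ_F(f)$ by verifying directly the colon identity
\[
\bigl(f^pJ_F(g)^{[p]}+g^pJ_F(f)^{[p]}:(fg)^{p-1}\bigr)=fJ_F(g)+gJ_F(f),
\]
so that $fJ_F(g)+gJ_F(f)$ lies in the set $\mathcal I$ of Lemma~\ref{Bijection}; the corresponding $F$-submodule then contains $\Min_F(f)\oplus\Min_F(g)=\Min_F(fg)$, forcing $J_F(fg)\subset fJ_F(g)+gJ_F(f)$. Your approach bypasses this Frobenius computation entirely: you stay inside $R_{fg}/R$, use the direct-sum decomposition $\Min_F(fg)=\Min_F(f)\oplus\Min_F(g)$, and reduce to the elementary UFD fact that any element of $R\cdot\tfrac1{fg}$ decomposing as $m_f+m_g$ with $m_f\in R_f/R$, $m_g\in R_g/R$ must have $m_f\in R\cdot\tfrac1{f}$ and $m_g\in R\cdot\tfrac1{g}$ (clear denominators and read off $f^{K-1}\mid b$ from $\gcd(f,g)=1$). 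Your route is shorter and more conceptual; the paper's route has the side benefit of exhibiting explicitly that $fJ_F(g)+gJ_F(f)$ satisfies the defining colon condition, which is itself a useful structural fact. For the intersection statement both proofs are essentially the same UFD divisibility argument.
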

\begin{proof}
We consider $R_f/R$ and $R_g/R$ as $F$-submodules of $R_{fg}/R$, where the inclusion is given by the localization maps, $\iota_f: R_f\to R_{fg}$
and $\iota_g: R_g\to R_{fg}.$
Let $\pi: R\to R/fgR$ and $\rho: R\to R/fR$ be the quotient morphisms. 
The limit of the morphism induced by the diagram
$$
\xymatrix{
0\ar[d] &  0\ar[d] & 0\ar[d] \\
R/fR \ar[r]^{f^{p-1}} \ar[d]^{g} &  R/f^{p} R \ar[r]^{f^{p^2-p}} \ar[d]^{g^{p^2}}  & R/f^{p^2} R  
\ar[d]^{g^{p^3}} \ar[r]^{\quad f^{p^3-p^2}}& \ldots \\
R/fgR  \ar[r]^{(fg)^{p-1}} &  R/f^p g^pR   \ar[r]^{(fg)^{p^2-p}} & R/(fg)^{p^2} R \ar[r]^{ \quad\quad\quad (fg)^{p^3-p^2}} &\ldots
}
$$
is $\iota_f.$ Moreover, under this correspondence
$$
\xymatrix{
0\ar[d] &  0\ar[d] & 0\ar[d] \\
J_F(f)/fR \ar[r]^{f^{p-1}} \ar[d]^{g} &  J_F(f)^{[p^2]}/f^{p} R \ar[r]^{f^{p^2-p}} \ar[d]^{g^{p^2}}  & J_F(f)^{[p^2]}/f^{p^2} R  
\ar[d]^{g^{p^3}} \ar[r] & \ldots \\
gJ_F(f)/fgR  \ar[r]^{(fg)^{p-1}} &  g^pJ_F(f)^{[p]}/f^p g^pR   \ar[r]^{(fg)^{p^2-p}} & g^{p^2}J_F(f)^{[p^2]}/(fg)^{p^2} R 
\ar[r] &\ldots
}
$$
induces the isomorphism of $F$-modules, $\iota_f:\Min_F(f)\to \iota_f(\Min_F(f)).$
We have that $$g(J_F(f))=\pi^{-1}(\Min_F(f)\cap R/fgR)\subset \pi^{-1}(\Min_F(f)\cap R/fgR)=J_F(fg).$$ 
In addition, $$(g^pJ_F(f)^{[p]}:(fg)^{p-1})=gJ_F(f),$$ and it defines $\Min_F(f)$ as a $F$-submodule of $R_{fg}/R.$
Likewise, $$fJ_F(g)\subset J_F(fg)\hbox{, }(f^pJ_F(g)^{[p]}:(fg)^{p-1})=fJ_F(g),$$ 
and it defines $\Min_F(g)$ as a $F$-submodule of $R_{fg}/R.$
Then, 
$$fJ_F(g)+gJ_F(f)\subset J_F(fg).$$
Since $\Min_F(f)\cap\Min_F(g)=0$, we have that $fJ_F(g)\cap gJ_F(g)=fgR$. 

We claim that
$$(f^p J_F(g)^{[p]}+g^pJ_F(f)^{[p]}:f^{p-1}g^{p-1})=fJ_F(g)+gJ_F(f).$$ 

To prove the first containment, take
$$h\in  (f^p J_F(g)^{[p]}+g^pJ_F(f)^{[p]}:f^{p-1}g^{p-1}).$$ 
Then $f^{p-1}g^{p-1}h=f^pv+g^pw$ for some $v\in( J_F(g))^{[p]}$
and $w\in J_F(g)^{[p]}$. Since $f$ and $g$ are relatively prime, 
$f^{p-1}$ divides $w$ and $g^{p-1}$ divides $v$. Thus,
there exist $a,b\in R$ such that $v=g^{p-1}a$ and $w=g^{p-1}b$. Then, $a\in (J_F(g)^{[p]}:g^{p-1})=J_F(g)$ and 
$b\in (J_F(f)^{[p]}:f^{p-1})=J_F(f)$. Since, 
$$f^{p-1}g^{p-1}h=f^pv+g^pw=f^pg^{p-1}a+g^pg^{p-1}b,$$ 
$h=fa+gb\in fJ_F(g)+gJ_F(f)$. 

For the other containment, 
it is straightforward to check that 
$$
fJ_F(g)+gJ_F(f)\subset (f^p J_F(g)^{[p]}+g^p J_F(f)^{[p]}:f^{p-1}g^{p-1}).
$$
Since $N_{fJ_F(g)+gJ_F(f)},$ the $F$-module generated by $fJ_F(g)+gJ_F(f),$ 
contains $\Min_F(f)$ and $\Min_F(g)$, $$\Min_F(f)\oplus\Min_F(g)\subset N_{fJ_F(g)+gJ_F(f)}.$$
Therefore, $J_F(h)\subset fJ_F(g)+gJ_F(f)$  and the result follows. 
\end{proof}
\begin{Prop}\label{F-JacPower}
Let  $\beta,\gamma\in\NN$ be such that $\beta<\gamma$. Then,
$$f^{\gamma-\beta}J_F(f^\beta)\subset J_F(f^\gamma)\subset J_F(f^\beta).$$
\end{Prop}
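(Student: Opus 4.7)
The plan is to exploit the fact that the minimal $F$-module $\Min_F(f^n)$ depends only on the set of distinct irreducible factors of $f$, not on their multiplicities. Writing $f = u f_1^{a_1} \cdots f_\ell^{a_\ell}$ as a product of pairwise coprime irreducibles (times a unit), the element $f^n$ has the very same set of distinct irreducible factors $f_1,\ldots,f_\ell$ for every $n\geq 1$. Hence by Definition \ref{F-Jac Def},
$$
\Min_F(f^n)=\Min_F(f_1)\oplus\cdots\oplus\Min_F(f_\ell)
$$
as $F$-submodules of the common ambient module $R_f/R = R_{f^n}/R$ (note that $R_{f^\beta}=R_{f^\gamma}=R_f$ since these localizations invert the same elements). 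In particular, $\Min_F(f^\beta)=\Min_F(f^\gamma)$.

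With this identification in hand, the definition of the $F$-Jacobian ideal unpacks to
$$
J_F(f^n)=\{\,a\in R \;:\; a/f^n\in \Min_F(f^n)\,\},
$$
via the embedding $\sigma_n:R/f^n R\hookrightarrow R_f/R$ sending $[a]\mapsto a/f^n$ and the abuse of notation explained right after Definition \ref{F-Jac Def}. Both containments then follow from moving the element $f^{\gamma-\beta}$ between numerator and denominator inside the $R$-module $\Min_F(f)$.

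For $J_F(f^\gamma)\subset J_F(f^\beta)$, if $a\in J_F(f^\gamma)$ then $a/f^\gamma\in \Min_F(f^\gamma)=\Min_F(f^\beta)$; since $\Min_F(f^\beta)$ is an $R$-submodule of $R_f/R$, multiplying by $f^{\gamma-\beta}$ gives $a/f^\beta=f^{\gamma-\beta}(a/f^\gamma)\in \Min_F(f^\beta)$, hence $a\in J_F(f^\beta)$. For $f^{\gamma-\beta}J_F(f^\beta)\subset J_F(f^\gamma)$, if $a\in J_F(f^\beta)$ then $a/f^\beta\in \Min_F(f^\beta)=\Min_F(f^\gamma)$, and $(f^{\gamma-\beta}a)/f^\gamma = a/f^\beta\in \Min_F(f^\gamma)$, so $f^{\gamma-\beta}a\in J_F(f^\gamma)$.

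There is no real obstacle here; the only delicate point is recognizing that $\Min_F(-)$ is insensitive to raising $f$ to a power (so both ambient modules and both minimal $F$-modules coincide), after which the two containments are immediate from the $R$-linear structure of $\Min_F(f)\subset R_f/R$.
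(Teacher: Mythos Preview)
Your proof is correct and follows essentially the same approach as the paper's: both arguments rest on the observation that $\Min_F(f^\beta)=\Min_F(f^\gamma)$ inside the common ambient module $R_f/R$, and then track the inclusions $\sigma_\beta,\sigma_\gamma$ of $R/f^\beta R$ and $R/f^\gamma R$ into $R_f/R$. The paper packages this via the commutative triangle relating $\sigma_\beta$, $\sigma_\gamma$, and multiplication by $f^{\gamma-\beta}$, while you write the same computation elementwise; the content is identical.
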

\begin{proof}
Let $\sigma_\ell: R/f^\ell\to R_f/R$ be the injection defined by sending $[a]\to a/f^\ell.$ We note that the image of $\sigma_\ell$ is $R\frac{1}{f^\ell}$
We have that 
the following commutative diagram,
$$
\xymatrix{
R/f^\beta R \ar[dd]^{f^{\gamma-\beta}} \ar[dr]^{\sigma_\beta}  &  \\
      & R_f/R \\
R/f^\gamma R \ar[ur]_{\sigma_\gamma}&  \\ 
}
$$
Then, $R\frac{1}{f^\beta} \cap \Min_F(f) \subset R\frac{1}{f^\gamma}\cap \Min_F(f),$ and this corresponds to
$$f^{\gamma-\beta}J_F(f^\beta)/f^\gamma R\subset J_F(f^\gamma)/f^\gamma R.$$ Hence, $f^{\gamma-\beta}J_F(f^\beta)\subset J_F(f^\gamma)$ because $f^\gamma$ belongs to both ideals.

The morphism $R\frac{1}{f^\gamma} \cap \Min_F(f^\beta)\FDer{f^{\gamma-\beta}} R\frac{1}{f^\beta} \cap \Min_F (f)$ is well defined and it is equivalent to the morphism
$J_F(f^\gamma)/f^\gamma R \to J_F(f^\beta)/f^\beta$ given by the quotient morphism $R/f^\gamma R\to R/f^\beta R$.
Then, $J_F(f^\gamma)+f^\beta R\subset J_F(f^\beta)$ and the result follows.
\end{proof}
\begin{Rem}
There are examples in which the containment in Proposition \ref{F-JacPower} is strict.
Let $R=\FF_p[x]$  and $f=x.$ In this case, $R_f/R$ is a simple $F$-module.
Then, $J_F(x^\beta)=R$ for every $\beta\in\NN$ and $f^{\gamma-\beta}J_F(f^\beta)\subset J_F(f^\gamma)$
for every $\gamma>\beta.$
\end{Rem}

\begin{Cor}
Let $f,g\in R$ be such that $f$ divides $g$. Then, $J_F(g)\subset J_F(f).$ 
\end{Cor}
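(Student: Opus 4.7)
The plan is to work directly from Definition \ref{F-Jac Def} and to exploit that $\Min_F$ depends only on the set of distinct irreducible factors. The key identity I would establish is
\[
\Min_F(g)\cap (R_f/R)=\Min_F(f),
\]
taken inside $R_g/R$, where $R_f/R\hookrightarrow R_g/R$ is the injective morphism of $F$-modules induced by $R_f\subset R_g$ (an inclusion that holds because $f\mid g$).

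First I would factor $g=f_1^{b_1}\cdots f_\ell^{b_\ell}q_1^{c_1}\cdots q_m^{c_m}$ into distinct irreducibles, grouped so that $f_1,\ldots,f_\ell$ are exactly the irreducible factors of $f$; then $f=u\,f_1^{a_1}\cdots f_\ell^{a_\ell}$ for some unit $u$ with $a_i\le b_i$, and $\gcd(f,q_1\cdots q_m)=1$. By Definition \ref{F-Jac Def}, inside $R_g/R$,
\[
\Min_F(g)=\bigoplus_{i=1}^{\ell}\Min_F(f_i)\oplus\bigoplus_{j=1}^{m}\Min_F(q_j),\qquad \Min_F(f)=\bigoplus_{i=1}^{\ell}\Min_F(f_i),
\]
so the inclusion $\Min_F(f)\subset \Min_F(g)$ is automatic.

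For the intersection identity, I would first prove the UFD lemma that, with $q:=q_1\cdots q_m$, the subrings $R_f,R_q\subset \Frac R$ satisfy $R_f\cap R_q=R$: any common element, written as $a/f^n=b/q^s$, gives $aq^s=bf^n$, and since $\gcd(f^n,q^s)=1$ this forces $f^n\mid a$, placing the element in $R$. Consequently $(R_f/R)\cap(R_q/R)=0$ in $R_g/R$, because any element of the intersection lifts to $R_f\cap R_q=R$ and so vanishes modulo $R$. For $x\in \Min_F(g)\cap(R_f/R)$ written as $x=y+z$ with $y\in \Min_F(f)\subset R_f/R$ and $z\in\bigoplus_j \Min_F(q_j)\subset R_q/R$, the difference $z=x-y$ then lies in $(R_f/R)\cap(R_q/R)=0$, so $x=y\in \Min_F(f)$.

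Granted the identity, the conclusion is immediate: writing $g=fh$ with $h\in R$, for $a\in J_F(g)$ Definition \ref{F-Jac Def} gives $a/g\in \Min_F(g)$, and multiplying by $h\in R$ (which preserves the $R$-submodule $\Min_F(g)$) yields $a/f=h\cdot(a/g)\in \Min_F(g)$. Since also $a/f\in R_f/R$, the identity forces $a/f\in \Min_F(f)$, i.e.\ $a\in J_F(f)$. Unit cases are handled by the convention $\Min_F=0$, $J_F=R$. The only real work is the UFD intersection lemma inside $R_g/R$; once that is in place, everything else is formal.
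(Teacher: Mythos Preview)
Your proof is correct, but it takes a different route from the paper. The paper's one-line proof invokes Propositions \ref{F-JacFactors} and \ref{F-JacPower}: writing $g=\prod_i f_i^{b_i}\prod_j q_j^{c_j}$ and $f=u\prod_i f_i^{a_i}$ as you do, iterating the Leibniz rule gives $J_F(g)=\sum_i (g/f_i^{b_i})J_F(f_i^{b_i})+\sum_j (g/q_j^{c_j})J_F(q_j^{c_j})$ and similarly for $J_F(f)$; then each $(g/q_j^{c_j})$ is a multiple of $f\in J_F(f)$, and each $(g/f_i^{b_i})J_F(f_i^{b_i})\subset (f/f_i^{a_i})J_F(f_i^{a_i})$ by Proposition \ref{F-JacPower}. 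By contrast, you bypass both propositions and work entirely at the module level, proving the clean identity $\Min_F(g)\cap(R_f/R)=\Min_F(f)$ inside $R_g/R$ via the UFD lemma $R_f\cap R_q=R$ and the direct-sum decomposition of $\Min_F$. Your argument is more conceptual and yields that intersection identity as a byproduct; the paper's approach is shorter once the two propositions are in hand and stays in the ideal-theoretic language of the section. Both are valid, and yours does not rely on the power containment of Proposition \ref{F-JacPower} at all, since the fact that $\Min_F$ only sees the set of distinct irreducible factors already absorbs the power case.
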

\begin{proof}
This follows from Propositions \ref{F-JacPower} and \ref{F-JacFactors}.
\end{proof}

\begin{Prop}\label{F-JacLocalization}
Let $f\in R$ and  $W\subset R$ be a multiplicative system.
Then,
$J_{F_{W^{-1}R}}(f)=W^{-1} J_{F_R}(f).$
\end{Prop}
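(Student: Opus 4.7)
The plan is to recognize $J_{F_R}(f)$ via Lemma~\ref{Bijection} as the ideal of $R$ associated to the $F$-submodule $\Min_{F_R}(f)\subset R_f/R$, and to verify both that this correspondence is compatible with the flat base change $R\to W^{-1}R$ and that $W^{-1}\Min_{F_R}(f)=\Min_{F_{W^{-1}R}}(f)$. Note that $W^{-1}R$ is still a regular $F$-finite UFD with $(W^{-1}R)_f/W^{-1}R$ of finite $D$-length, so the hypotheses of Notation~\ref{SOFJ-I} persist and Lemma~\ref{Bijection} is available on the target side. Since $W^{-1}$ commutes with Frobenius, intersections, quotients, preimages, and colon ideals with respect to Frobenius powers, the bijection $N\mapsto I_N=\pi^{-1}(N\cap R/fR)$ intertwines with $W^{-1}$: applying $W^{-1}$ to an $F_R$-submodule $N$ yields the $F_{W^{-1}R}$-submodule corresponding to $W^{-1}I_N$ under the analogous bijection for $W^{-1}R$.

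Next I would reduce to the irreducible case. Writing $f=uf_1^{\beta_1}\cdots f_\ell^{\beta_\ell}$ with $u$ a unit and the $f_i$ distinct irreducibles in the UFD $R$, we have $\Min_{F_R}(f)=\bigoplus_i\Min_{F_R}(f_i)$ by Definition~\ref{F-Jac Def}. In $W^{-1}R$ each $f_i$ is either a unit or remains irreducible, so the analogous decomposition of $\Min_{F_{W^{-1}R}}(f)$ cuts the direct sum down to the surviving non-unit factors. The task reduces to showing, for each irreducible $g\in R$, that $W^{-1}\Min_{F_R}(g)=\Min_{F_{W^{-1}R}}(g)$ when $g$ stays a non-unit in $W^{-1}R$, and $W^{-1}\Min_{F_R}(g)=0$ otherwise. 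The unit case is immediate since then $(W^{-1}R)_g/W^{-1}R=0$.

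The main obstacle is the non-unit case, which amounts to preserving simplicity and non-vanishing of the minimal object under localization. By Proposition~\ref{DModFMod}, $\Min_{F_R}(g)$ is simple as a $D_R$-module, so the fact recalled in Section~\ref{SecDMod} implies $W^{-1}\Min_{F_R}(g)$ is either zero or a simple $D_{W^{-1}R}$-module. For non-vanishing, I would observe that $\Min_{F_R}(g)$ contains the nonzero $R$-submodule $\sigma(J_{F_R}(g)/gR)$, whose support equals $V(g)$: any element of $J_{F_R}(g)\setminus gR$ is coprime to $g$ in the UFD, hence has annihilator $gR$ modulo $g$. Thus $W^{-1}\Min_{F_R}(g)\neq 0$ precisely when $g$ is a non-unit in $W^{-1}R$. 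In that case $W^{-1}\Min_{F_R}(g)$ is a simple $D_{W^{-1}R}$-submodule of $(W^{-1}R)_g/W^{-1}R$ that also carries an $F_{W^{-1}R}$-module structure inherited from $\Min_{F_R}(g)$; since every $F$-submodule of $(W^{-1}R)_g/W^{-1}R$ is in particular a $D$-submodule, $W^{-1}\Min_{F_R}(g)$ is simple as an $F_{W^{-1}R}$-module, and Lemma~\ref{WellDefMod} applied to $W^{-1}R$ forces it to equal the unique minimal $F$-submodule $\Min_{F_{W^{-1}R}}(g)$.
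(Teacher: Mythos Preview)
Your proof is correct and follows essentially the same strategy as the paper: reduce to the case of (a power of) an irreducible element, use that $\Min_{F_R}(g)$ is a simple $D_R$-module so its localization is either zero or simple, and conclude $W^{-1}\Min_{F_R}(g)=\Min_{F_{W^{-1}R}}(g)$, whence the equality of Jacobian ideals via the intersection with $R/fR$. The paper carries out the reduction via Proposition~\ref{F-JacFactors} rather than the direct-sum decomposition of $\Min_F$, and is terser about the non-vanishing and simplicity step you spell out, but the substance is the same.
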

\begin{proof}
By Proposition \ref{F-JacFactors}, it suffices to prove the claim for $f=g^n,$ where $g$ is an irreducible element.
We note that $g$ is either a unit or a irreducible element in $W^{-1}R$. 
We have that $\Min_{F_{W^{-1}R}}(f)=\Min_{F_{W^{-1}R}}(g)$ is either zero or a simple $F$-module by  Lemma \ref{DModFMod}. 
Then, $\Min_{F_{W^{-1}R}}(f)=W^{-1}\Min_{F_R}(f),$ and so 
\begin{align*}
J_{F_{W^{-1}R}}(f) /f W^{-1}R& =W^{-1}R/f W^{-1}R\cap \Min_{F_ {W^{-1}R}}(f)\\
& =W^{-1}(R/f R\cap\Min_{F_R}(f))\\
& =W^{-1}J_{F_R}(f)/f W^{-1}R,
\end{align*}
and the result follows because $f$ belongs to both ideals.
\end{proof}

\begin{Prop}\label{F-JacPrimePower}
Let $f\in R$. Then, $J_{F_{R^{1/p^e}}}(f) =J_{F_R}(f)R^{1/p^e}$.
Moreover, $J_{F_{R}}(f^{p^e}) =J_{F_R}(f)^{[p^e]}$.
\end{Prop}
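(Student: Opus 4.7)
The plan is to prove the second identity $J_F(f^{p^e}) = J_F(f)^{[p^e]}$ first by a direct computation inside $R_f/R$, and then deduce the first identity $J_{F_{R^{1/p^e}}}(f) = J_F(f) R^{1/p^e}$ by transport along the ring isomorphism $\varphi : R \to R^{1/p^e}$, $r \mapsto r^{1/p^e}$.

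By iterating Proposition \ref{F-JacFactors} and using that Frobenius powers distribute over sums and products of ideals, the second identity reduces to the case $f = g^\beta$ with $g$ irreducible. In that case $R_{f^{p^e}} = R_g = R_f$, so $\Min_F(f^{p^e}) = \Min_F(g) = \Min_F(f)$ as submodules of $R_g/R$. By Proposition \ref{DModFMod}, $\Min_F(g)$ is generated, as an $F$-module, by the tower
\[
J_F(g)/gR \FDer{g^{p-1}} J_F(g)^{[p]}/g^pR \FDer{g^{p^2-p}} J_F(g)^{[p^2]}/g^{p^2}R \to \cdots;
\]
since $R$ is a domain, all transition maps are injective, and inside $R_g/R$ one has $\Min_F(g) = \bigcup_k \{b/g^{p^k} + R : b \in J_F(g)^{[p^k]}\}$. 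For $k$ large enough that $p^k \geq \beta p^e$, clearing denominators and using $g^{p^k}R \subseteq J_F(g)^{[p^k]}$ shows that $a/g^{\beta p^e}$ lies in $\Min_F(g)$ iff $a \in (J_F(g)^{[p^k]} : g^{p^k - \beta p^e})$; these colon ideals form an ascending chain in $k$ which stabilizes by Noetherianity, and their common stable value is $J_F(g^{\beta p^e})$. The same reasoning with $e = 0$ gives $J_F(g^\beta) = (J_F(g)^{[p^k]} : g^{p^k - \beta})$ for large $k$. Taking $p^e$-th Frobenius powers and using the flatness of Frobenius on the regular ring $R$, which yields $(I : x)^{[p^e]} = I^{[p^e]} : x^{p^e}$, we obtain
\[
J_F(g^\beta)^{[p^e]} = \bigl(J_F(g)^{[p^{k+e}]} : g^{p^{k+e} - \beta p^e}\bigr) = J_F(g^{\beta p^e}),
\]
the last equality by the stabilization of the same chain after reindexing $k \leadsto k+e$.

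For the first identity, $\varphi$ is an isomorphism of $F$-finite regular UFDs that intertwines the Frobenius endomorphisms; since the entire $F$-Jacobian construction is intrinsic to this data, $\varphi(J_{F_R}(h)) = J_{F_{R^{1/p^e}}}(\varphi(h))$ for every $h \in R$. Applied to $h = f^{p^e}$, for which $\varphi(f^{p^e}) = f$, and combined with the identity just proved, this gives
\[
J_{F_{R^{1/p^e}}}(f) = \varphi(J_F(f)^{[p^e]}).
\]
Writing $J_F(f) = (g_1, \ldots, g_s)$ one has $J_F(f)^{[p^e]} = (g_1^{p^e}, \ldots, g_s^{p^e})$, which $\varphi$ sends to $(g_1, \ldots, g_s) R^{1/p^e} = J_F(f) R^{1/p^e}$. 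The main technical obstacle is the careful bookkeeping between Frobenius powers and colon ideals under iteration, which rests throughout on the flatness of Frobenius on regular rings; the reduction to the irreducible case is essential so that $\Min_F(g)$ is simple and admits the explicit direct-limit description above.
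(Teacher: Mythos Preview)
Your proof is correct but proceeds in the reverse order from the paper's. The paper first proves $J_{F_{R^{1/q}}}(f) = J_{F_R}(f) R^{1/q}$ (with $q = p^e$) by a length-comparison argument: a composition series $fR = J_0 \subset \cdots \subset J_h = R$ for $R_f/R$ as an $F_R$-module extends, by faithful flatness of $R \to R^{1/q}$, to a strictly ascending chain $J_i R^{1/q}$ of the same length $h$; since the isomorphism $G : R^{1/q} \to R$, $r \mapsto r^q$, identifies $R^{1/q}_f/R^{1/q}$ with $R_{f^q}/R$ and hence shows that it too has $F$-module length $h$, the extended chain is already a composition series, forcing its first nontrivial term $J_1 R^{1/q}$ to equal $J_{F_{R^{1/q}}}(f)$. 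The identity $J_{F_R}(f^{q}) = J_{F_R}(f)^{[q]}$ is then obtained by applying $G$. Your route instead computes $J_F(g^{m})$ directly as the stable value of the colon ideals $(J_F(g)^{[p^{k}]} : g^{p^{k}-m})$ and uses flatness of Frobenius on colon ideals to match the cases $m=\beta$ and $m=\beta p^{e}$; this is more explicit and computational, whereas the paper's argument is more structural and makes transparent why the result is forced by the invariance of $F$-module length under the isomorphism $R\cong R^{1/q}$. One minor citation issue: the fact that $\Min_F(g)$ is generated as an $F$-module by the tower $J_F(g)/gR \to J_F(g)^{[p]}/g^{p}R \to \cdots$ is the content of Lemma~\ref{Bijection} together with Proposition~\ref{WellDefIdeal}, not of Proposition~\ref{DModFMod}.
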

\begin{proof}
By Proposition \ref{F-JacFactors}, we may assume that $f=g^n$ where $g$ is irreducible.
Let $q$ denote $p^e$ and $h$ denote the length of $R_f/R$ in the category of $F$-modules. 
Let $G:R^{1/q}\to R$ be the isomorphism defined by $r\to r^{q}.$ 
Under the isomorphism $G$, $R^{1/q}_f/R^{1/q}$ corresponds to $ R_{f^{q}}/R$. Then, the length of $R^{1/q}_f/R^{1/q}$ in the category of $F_{R^{1/q}}$-modules is $h$. 
Let 
$0=M_0\subset\ldots \subset M_h=R_f/R$
be a chain of $F_R$-submodules of $R_f/R$ such that $M_{i+1}/M_i$ is a simple $F_R$-module.
Let $fR=J_0\subset \ldots \subset J_h=R $ be the corresponding chain of ideals under the bijection given in Lemma \ref{Bijection}.
Since $f=g^n$ and $g$ is irreducible, $M_1=\Min_{F_R}(f)$ and $J_1=J_{F_R}(f)$. We note that $(J_i^pR^{1/q}:f^{p-1})=J_iR^{1/q}$ 
 and $J_iR^{1/q}\neq J_{i+1}R^{1/q}$ because $R^{1/q}$ is a faithfully flat $R$-algebra.

Then, we have a strictly ascending chain of ideals 
$$fR^{1/q}=J_0R^{1/q}\subset \ldots \subset J_hR^{1/p}=R^{1/q} $$
that corresponds to a strictly ascending chain of $F_{R^{1/q}}$-submodules of $R^{1/q}_f/R^{1/q}.$

Since $f=(g^{1/q})^{qn}$, $g^{1/q}$ is irreducible and the length of $R^{1/q}_f/R^{1/q}$ is $h$, we have that
$$J_{F_R}(f)R^{1/q}=J_1R^{1/p}=J_{F_{R^{1/q}}}(f).$$
After applying the isomorphism $G$ to the previous equality, we have that
$$  J_{F_{R}}(f)^{[q]}=  G(J_{F_R}(f)R^{1/q})=G(J_{F_{R^{1/q}}}(f)) = J_{F_{R}}(f^q).$$
\end{proof}

\begin{Prop}\label{F-JacFlat}
Let $R\to S$ be a flat morphism of UFDs and let $f\in R$. 
If $S$ is as in Notation \ref{SOFJ-I}, then $J_{F_S}(f)\subset J_{F_R}(f)S$. 
\end{Prop}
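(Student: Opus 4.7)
The plan is to argue via the bijection in Lemma \ref{Bijection} applied to $S$: I will show that $J:=J_{F_R}(f)S$ satisfies $f\in J$ and $(J^{[p]}:f^{p-1})=J$, so that it corresponds to some $F_S$-submodule $N$ of $S_f/S$, and then that $N$ contains $\Min_{F_S}(f)$, which by the bijection gives $J\supseteq J_{F_S}(f)$. Flatness of $R\to S$ immediately yields the colon condition: both Frobenius powers and colons with principal ideals commute with flat base change, so
\[
((J_{F_R}(f)S)^{[p]}:f^{p-1})=(J_{F_R}(f)^{[p]}:f^{p-1})S=J_{F_R}(f)S,
\]
where the last equality is the defining relation of $J_{F_R}(f)$.

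By Lemma \ref{Bijection}, $J=J_{F_R}(f)S$ corresponds to the $F_S$-submodule $N$ of $S_f/S$ generated by the root morphism $J_{F_R}(f)S/fS\FDer{f^{p-1}} J_{F_R}(f)^{[p]}S/f^pS$. Using flatness, this $N$ is naturally identified with the base change $\Min_{F_R}(f)\otimes_R S$ inside $S_f/S=(R_f/R)\otimes_R S$. Next I would apply Proposition \ref{F-JacLocalization} to reduce the desired containment $J_{F_S}(f)\subset J_{F_R}(f)S$ to the case where both $R$ and $S$ are local and $R\to S$ is a faithfully flat local homomorphism of regular local $F$-finite rings. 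The advantage in the local case is that faithful flatness preserves nonvanishing of submodules under base change, so that $N$ is a nonzero $F_S$-submodule of $S_f/S$.

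It remains to prove that $N\supseteq\Min_{F_S}(f)$ in this local setting, and this is the main obstacle. Write $f=\prod_j h_j^{\beta_j}$ with $h_j$ irreducible in $S$, so that $\Min_{F_S}(f)=\bigoplus_j\Min_{F_S}(h_j)$ by Proposition \ref{MinFModFactors}. Since each $\Min_{F_S}(h_j)$ is a simple $F_S$-module, the containment $\Min_{F_S}(h_j)\subseteq N$ is equivalent to $N\cap\Min_{F_S}(h_j)\neq 0$. I would argue this by tracing nonzero elements of $\Min_{F_R}(f)\cap R\cdot\tfrac{1}{f}$, i.e.\ the class of $J_{F_R}(f)/fR$, through the base change, and showing via faithful flatness together with the characterization of each $\Min_{F_S}(h_j)$ as the unique simple $D_{S_{h_j}}$-submodule of $(S_f)_{h_j}/S_{h_j}$ in Proposition \ref{DModFMod} that the resulting images meet every simple summand $\Min_{F_S}(h_j)$ nontrivially. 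The hard part is that the irreducible factorization of $f$ in $S$ can be strictly finer than its factorization in $R$, so simple $F_S$-constituents of the socle of $S_f/S$ need not visibly come from simple $F_R$-subquotients on the $R$-side; reconciling the two socle decompositions is the crux of the argument.
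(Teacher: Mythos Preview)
Your setup is correct and matches the paper: flatness gives $(J_{F_R}(f)^{[p]}S:f^{p-1})=J_{F_R}(f)S$, so $J_{F_R}(f)S$ corresponds via Lemma~\ref{Bijection} to an $F_S$-submodule $N\subset S_f/S$, and the task is to show $\Min_{F_S}(f)\subset N$, i.e.\ that $N$ meets every simple $\Min_{F_S}(h_j)$ nontrivially. But you stop precisely at this point, calling it ``the crux'' without carrying it out. Your proposed mechanism---faithful flatness after localizing to ensure $N\neq 0$, plus the $D$-module characterization of Proposition~\ref{DModFMod}---does not close the gap: nonvanishing of $N$ only guarantees $N$ contains \emph{some} simple $F_S$-submodule (Proposition~\ref{MinFModFactors}), not all of them, and nothing in your outline explains why the image of $J_{F_R}(f)/fR$ must hit each $S_{h_j}/S$-piece separately. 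The reduction to the local case is also unnecessary and does not help with this issue.

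The paper handles this step with a short, concrete argument that you are missing. First, by Proposition~\ref{F-JacFactors} one reduces to $f=g^\beta$ with $g$ irreducible in $R$, so the $R$-side has a single irreducible factor. Pick $a\in J_{F_R}(f)\setminus fR$ and write $a=bg^\gamma$ with $0\le\gamma<\beta$ and $\gcd(b,g)=1$ in $R$. Since multiplication by $b$ on $R/gR$ is injective and $R\to S$ is flat, multiplication by $b$ on $S/gS$ is injective, so $b$ is coprime to $g$---and hence to every irreducible factor $g_i$ of $g$---in $S$. Thus $b/g=g^{\beta-\gamma-1}\cdot(a/f)\in N$ is nonzero in $S_g/S$, and multiplying by $g/g_i$ gives a nonzero element of $N\cap S_{g_i}/S$ for each $i$. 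This forces $\Min_{F_S}(g_i)\subset N$ for every $i$, which is exactly the piece your outline leaves open.
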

\begin{proof}
We may assume that $f=g^\beta$ where $g$ is an irreducible element in $R$ by Proposition \ref{F-JacFactors}.
 Since $S$ is flat, $(J_{F_R}(f)^{[p]}S:f^{p-1})=J_{F_R}(f)S.$ Let $M$ denote the $F_S$-submodule of $S_f/S$
given by $J_{F_R}(f)S$ under the correspondence in Lemma \ref{Bijection}.
If $f$ is a unit in $S,$ then $J_F(f)S=S$ and the result is immediate.
We may assume that $f$ is not a unit in $S$.
Since $J_F(f)\neq fR,$ we can pick $a\in J_F(f) \setminus fR.$
Then, $a=bg^\gamma$ for some $0\leq \gamma<\beta$ and $b\in R$ such that
$\gcd(b,g)=1.$ Then, $R/g\FDer{b} R/g$ is injective, and so $S/gS\FDer{b} S/gS$ is also injective. Thus, $\gcd(b,g)=1$ in $S$.
Hence, $b/g$ is not zero in $S_g/S$. Moreover, $b/g=g^{\beta-\gamma-1}a/f\in M$ and it is not zero.
Let $g_1,\ldots, g_\ell\in S$ irreducible relatively prime elements such that 
$g=g^{\beta_1}_1\cdots g^{\beta_\ell}_1.$ We have that $b/g_i=hb/g\in S_{g_i}/S\cap M\setminus\{0\}$, where
$h=g^{\beta_1}_1\cdots g^{\beta_i-1}_i \cdots g^{\beta_\ell}_1$. Then, $\Min_{F_S}(g_i)\subset M$ and so $\Min_{F_S}(f)\subset M.$
Therefore, $J_{F_S}(f)\subset J_{F_R} (f)S.$
\end{proof}

\begin{Prop}\label{F-JacCompletion}
Suppose that $R$ is a local ring. Let $f\in R$. 
Then
$$
J_{F_{\widehat{R}}}(f)=
J_{F_R}(f)
\widehat{R},
$$
where $\widehat{R}$ denotes the completion of $R$ with respect to the maximal ideal.
\end{Prop}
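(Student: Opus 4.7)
The plan is to follow the template of Proposition \ref{FJacAlphaComplete}: combine the flat-base-change containment from Proposition \ref{F-JacFlat} with an analysis of the correspondence in Lemma \ref{Bijection} under completion.

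First I would reduce to the case $f = g^n$ with $g \in R$ irreducible, using Proposition \ref{F-JacFactors} to decompose $f$ into irreducibles in $R$ and observing that both operators $J_{F_R}(-)\widehat{R}$ and $J_{F_{\widehat{R}}}(-)$ are compatible with such a factorization (the right-hand side by invoking \ref{F-JacFactors} in $\widehat{R}$, the left-hand side by flatness of $R \to \widehat{R}$). Note that $\widehat{R}$ is itself an $F$-finite regular local UFD (by Auslander--Buchsbaum), and so satisfies Notation \ref{SOFJ-I}. Applying Proposition \ref{F-JacFlat} to the flat morphism $R \to \widehat{R}$ immediately yields $J_{F_{\widehat{R}}}(f) \subseteq J_{F_R}(f)\widehat{R}$, which gives one direction.

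For the reverse, set $I := J_{F_R}(f)\widehat{R}$. Flatness of $\widehat{R}$ over $R$ lifts the relation $(J_{F_R}(f)^{[p]}:f^{p-1}) = J_{F_R}(f)$ to $(I^{[p]}:f^{p-1}) = I$; moreover, $f \in I$ and $I \supsetneq f\widehat{R}$ by faithful flatness. Hence by Lemma \ref{Bijection}, $I$ corresponds to a nonzero $F_{\widehat{R}}$-submodule $N \subseteq \widehat{R}_f/\widehat{R}$, and tracking the root-map direct limit identifies $N$ with the image of $\Min_{F_R}(f)\otimes_R \widehat{R}$ under the canonical isomorphism $\widehat{R}_f/\widehat{R} \cong (R_f/R)\otimes_R \widehat{R}$.

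The main obstacle is showing $N \subseteq \Min_{F_{\widehat{R}}}(f)$: the $R$-irreducible $g$ may factor in $\widehat{R}$ as $g = u\,h_1^{a_1}\cdots h_m^{a_m}$, so that a priori $N$ is merely some $F_{\widehat{R}}$-submodule containing the socle $\Min_{F_{\widehat{R}}}(f) = \bigoplus_i \Min_{F_{\widehat{R}}}(h_i)$ rather than being equal to it. To settle this I would argue that $N$ is semisimple by localizing at each $h_i$: Proposition \ref{F-JacLocalization} implies that $N_{h_i}$ corresponds (again via Lemma \ref{Bijection}) to $J_{F_{\widehat{R}_{h_i}}}(f)$, and in $\widehat{R}_{h_i}$ the element $f$ has only the single irreducible factor $h_i$, so by Proposition \ref{WellDefIdeal} the local correspondence forces $N_{h_i} = \Min_{F_{\widehat{R}_{h_i}}}(h_i)$. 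Since each simple summand $\Min_{F_{\widehat{R}}}(h_i)$ is supported precisely at $V(h_i)$, an $F_{\widehat{R}}$-submodule of $\widehat{R}_f/\widehat{R}$ is determined by its localizations at the height-one primes $(h_i)$ by Lemma \ref{Intersection}; patching therefore gives $N = \bigoplus_i \Min_{F_{\widehat{R}}}(h_i) = \Min_{F_{\widehat{R}}}(f)$, which translates back to $J_{F_R}(f)\widehat{R} \subseteq J_{F_{\widehat{R}}}(f)$ and completes the proof.
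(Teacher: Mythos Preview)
Your overall strategy diverges substantially from the paper's. The paper's proof is a two-line argument: it simply invokes Blickle's result \cite[Theorem~4.6]{Manuel} that $\Min_{F_{\widehat{R}}}(f)=\Min_{F_R}(f)\otimes_R\widehat{R}$, and then intersects with $\widehat{R}/f\widehat{R}=(R/fR)\otimes_R\widehat{R}$ using flatness. You are effectively trying to \emph{reprove} Blickle's theorem by hand, and that is where your argument breaks.

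The genuine gap is in your final patching step. You claim that ``an $F_{\widehat{R}}$-submodule of $\widehat{R}_f/\widehat{R}$ is determined by its localizations at the height-one primes $(h_i)$,'' and attribute this to Lemma~\ref{Intersection}. This is false, and Lemma~\ref{Intersection} does not say it. Concretely, take $S=k[[x,y]]$ and $f=xy$: both $S_f/S$ and $\Min_{F_S}(f)=S_x/S\oplus S_y/S$ localize to the entire module at each of the height-one primes $(x)$ and $(y)$ (since $S_{(x)}$ and $S_{(y)}$ are DVRs, where the only $F$-submodules are $0$ and everything), yet $\Min_{F_S}(f)\subsetneq S_f/S$. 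The quotient is supported at the maximal ideal, invisible to height-one localization. In your situation this is exactly the danger: when $g$ splits in $\widehat{R}$ as $h_1\cdots h_m$, you have verified that $N$ and $\Min_{F_{\widehat{R}}}(f)$ agree after localizing at each $(h_i)$, but the possible discrepancy $N/\Min_{F_{\widehat{R}}}(f)$ lives in $(\widehat{R}_f/\widehat{R})/\Min_{F_{\widehat{R}}}(f)$, whose support can lie entirely in codimension $\geq 2$.

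A smaller issue: your invocation of Proposition~\ref{F-JacLocalization} to identify $N_{(h_i)}$ with $J_{F_{\widehat{R}_{(h_i)}}}(f)$ is circular as written, since $N$ corresponds to $J_{F_R}(f)\widehat{R}$, not to $J_{F_{\widehat{R}}}(f)$. The conclusion you need there (that $N_{(h_i)}$ is the whole localized module) is still correct, because in a DVR the only ideals satisfying the colon condition of Lemma~\ref{Bijection} are $fS$ and $S$; but the patching failure above remains fatal. To close the argument you would need precisely the semisimplicity of $\Min_{F_R}(f)\otimes_R\widehat{R}$, which is the content of Blickle's theorem that the paper cites.
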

\begin{proof}
We have that $\Min_{F_{\widehat{R}}}(f)=\Min_{F_R}(f)\otimes_R \widehat{R}$  \cite[Theorem $4.6$]{Manuel}.
Then,
$$
J_{F_{\widehat{R}}}=\left( \widehat{R}/f\widehat{R}\right) \cap \Min_{F_{\widehat{R}}}(f)
= \left( \left( R/fR \right) \cap\Min_{F_R}(f) \right) \otimes_R \widehat{R}
=J_{F_R}(f)
\widehat{R}
$$

\end{proof}

\begin{Lemma}\label{ExtensionPerfectCase}
Let $R=K[x_1,\ldots,x_n]$, where $K$ is a perfect field. 
Let $K\to L$ be an algebraic field extension of $K$, $S=L[x_1\ldots,x_n]$, and $R\to S$ the map induced by the extension.  
Then, $J_{F_S}(f) = J_{F_R}(f)S$. 
\end{Lemma}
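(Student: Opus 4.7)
The plan is to verify that $S$ satisfies Notation~\ref{SOFJ-I}, use Proposition~\ref{F-JacFlat} for one inclusion, and prove the reverse by reducing to the identity $\Min_{F_R}(g)\otimes_R S=\Min_{F_S}(g)$ inside $S_g/S$ for $g\in R$ irreducible. This module identity will be extracted from a Galois descent combined with a socle argument.

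First, because $L/K$ is algebraic and $K$ is perfect, $L$ is itself perfect: for $a\in L$, a $p$-th root $a^{1/p}$ is algebraic and separable over $K$, its minimal polynomial divides $x^p-a=(x-a^{1/p})^p$ and must equal $x-a^{1/p}$, so $a^{1/p}\in K\subseteq L$. Hence $S$ is an $F$-finite regular UFD satisfying Notation~\ref{SOFJ-I}, and $R\to S$ is faithfully flat. Proposition~\ref{F-JacFlat} gives $J_{F_S}(f)\subseteq J_{F_R}(f)S$. For the reverse, iterating Proposition~\ref{F-JacFactors} reduces $f$ to a power $g^n$ of an irreducible $g\in R$; and once $\Min_{F_R}(g)\otimes_R S=\Min_{F_S}(g)$ is known as $F_S$-submodules of $S_g/S$, intersecting with the image of $S/g^nS\hookrightarrow S_g/S$ and using flatness of $S/R$ yields $J_{F_R}(g^n)S=J_{F_S}(g^n)$.

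To prove the module identity, I pass to the Galois closure $\tilde L$ of $L/K$ in an algebraic closure of $K$ and set $\tilde S=\tilde L[x_1,\ldots,x_n]$; the chain
\begin{equation*}
J_{F_R}(g)\tilde S \;=\; J_{F_{\tilde S}}(g) \;\subseteq\; J_{F_S}(g)\tilde S \;\subseteq\; J_{F_R}(g)S\cdot\tilde S \;=\; J_{F_R}(g)\tilde S,
\end{equation*}
where the middle inclusion is Proposition~\ref{F-JacFlat} applied to $S\to\tilde S$, combined with faithful flatness of $S\to\tilde S$, reduces the problem to $L/K$ Galois. Assume now $L/K$ is Galois with group $G$, and write $g=uh_1\cdots h_s$ in $S$ with distinct irreducibles $h_i$ (no repeated factors, since $R\to S$ is \'etale by separability of $L/K$). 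Then $G$ fixes $R$, acts $F_S$-equivariantly on $S_g/S$, and permutes the $h_i$ and the $\Min_{F_S}(h_i)$ transitively, so by Proposition~\ref{MinFModFactors} the module $\Min_{F_S}(g)=\bigoplus_i\Min_{F_S}(h_i)$ coincides with the sum of all simple $F_S$-submodules of $S_g/S$. Setting $M:=\Min_{F_R}(g)\otimes_R S$, the $F_S$-submodule $M\subseteq S_g/S$ is $G$-stable with $M^G=\Min_{F_R}(g)$, and $\Min_{F_R}(g)$ generates $M$ over $F_S$. For any $G$-stable $F_S$-submodule $N\subseteq M$, $N^G$ is an $F_R$-submodule of the simple $F_R$-module $\Min_{F_R}(g)$; the case $N^G=\Min_{F_R}(g)$ forces $N=M$, while $N^G=0$ forces $N=0$ by applying finite Galois descent to $N\cap(\Min_{F_R}(g)\otimes_R S_\alpha)$ along $L=\bigcup_\alpha L_\alpha$ with $L_\alpha/K$ finite Galois. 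Hence $M$ is $G$-simple; its socle $\Sc(M)$ is $G$-stable and nonzero because $M\subseteq S_g/S$ has finite length, so $G$-simplicity forces $\Sc(M)=M$, i.e., $M$ is semisimple and therefore contained in $\Min_{F_S}(g)$. Combined with the reverse inclusion from flatness, $M=\Min_{F_S}(g)$, as desired.

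The main obstacle is establishing the $G$-simplicity of $M$ in the possibly infinite Galois case, which requires combining finite Galois descent level by level along $L=\bigcup_\alpha L_\alpha$ with the identity $M^G=\Min_{F_R}(g)$ and relying on the finite-length hypothesis in Notation~\ref{SOFJ-I} to keep the socle nonzero.
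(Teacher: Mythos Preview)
Your argument is correct and in fact establishes the stronger module identity $\Min_{F_R}(g)\otimes_R S=\Min_{F_S}(g)$, but it takes a genuinely different route from the paper. The paper avoids Galois theory entirely: it observes that, because $K$ is perfect, $D_R=R\bigl[\tfrac{1}{t!}\tfrac{\partial^t}{\partial x_i^t}\bigr]\subset D_S$, so the intersection $\Min_{F_S}(f)\cap R_f/R$ is automatically a $D_R$-submodule of $R_f/R$; an integrality trick (using that $S/fS$ is integral over $R/fR$, since $L/K$ is algebraic) shows this intersection meets $R/fR$ nontrivially, whence it contains the unique simple $D_R$-submodule $\Min_{F_R}(f)$, giving $J_{F_R}(f)\subset J_{F_S}(f)\cap R$ directly. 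Your approach is more structural and yields a cleaner module-theoretic statement that would transport to other settings, while the paper's is shorter and exploits the concrete description of differential operators available only over a perfect base field.

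Two small points in your write-up. First, your argument that $L$ is perfect should conclude $a^{1/p}\in L$, not $a^{1/p}\in K$: work with the minimal polynomial of $a^{1/p}$ over $L$, which is separable because $a^{1/p}$ is separable over $K$ and hence over $L$. Second, the assertion that $N^G$ is an $F_R$-submodule of $\Min_{F_R}(g)$ deserves one line of justification, since a priori $(I_N\cap R)^{[p]}$ need not equal $I_N^{[p]}\cap R$. It follows because Galois descent (carried out level by level along $L=\bigcup_\alpha L_\alpha$ exactly as in your treatment of the case $N^G=0$) gives $N=N^G\otimes_R S$, hence $I_N=(I_N\cap R)S$, and faithful flatness then transports the colon condition $(I_N^{[p]}:g^{p-1})=I_N$ down to $I_N\cap R$.
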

\begin{proof}
We can assume that $f=g^\beta$ where $g$ is an irreducible element in $R$ by \ref{F-JacFactors}.
It suffices to show that $J_{F_R}(R)S \subset J_{F_S}(S)$ by Proposition \ref{F-JacFlat}. 

There is an inclusion $\phi:R_f/R\to S_f/S,$  which is  induced by  $R\to S$.
We take $M=(\Min_{F_S}(f))\cap R_f/R.$ We claim that $M$ is a  $D_R$-module of $R_f/R.$
Since $K$ is perfect, 
$$
D_R=\bigcup_{e\in\NN} \Hom_{S^{p^e}}(S,S)=D(R,K)=R[\frac{1}{t!}\frac{\partial ^t}{\partial x_i^t}].
$$
We note that $ D_R\subset D_S$, and that 
for every $m\in R_f/R$, $\phi(\frac{\partial ^t}{\partial x_i^t} m)=\frac{\partial ^t}{\partial x_i^t}\phi(m)$.
As a consequence,   $\frac{\partial ^t}{\partial x_i^t} m\in M$ for every $m\in M.$
Therefore, $M$ is a $D_R$-module.

Let $I=M\cap R/fR.$  We note that $$I=\Min_{F_S}(f)\cap R/fR=(J_{F_S}(f)/fS)\cap R/fR$$ and that $S/fS$ is an integral extension of $R/fR$ because $L$ is an algebraic extension of $K$. 
Let $r\in J_{F_S}(f)/fS$ not zero, and $a_j\in R/fR$ such that $a_0\neq 0$
$$
r^n+a_{n-1}r^{n-1}+\ldots +a_1 r+a_0=0
$$
in $S/fS.$ Then,
$$
r(a_{n-1}r^{n-1}+\ldots +a_1)=-a_0,
$$
and so $ a_0\in I=(J_{F_S}(f)/fS)\cap R/fR,$ and then $M\neq 0.$
Therefore, $\Min_{F_R}(f)\subset M$ and so $J_F(f)/f\subset I.$
Let $\pi: R\to R/fR$ be the quotient morphism.
Then,
$$
J_{F_R}(f)\subset \pi^{-1} (I)=J_{F_S}(f)\cap R,
$$
and $$J_{F_R}(f)S\subset (J_{F_S}(f)\cap R)S\subset J_{F_S}(f).$$
\end{proof}

\begin{Lemma}\label{ExtensionNotPerfect}
Let $R =K[x_1,\ldots,x_n]$, where $K$ is an $F$-finite field. 
Let $L=K^{1/p}$, $S=L[x_1\ldots,x_n]$, and $R\to S$ the map induced by the extension $K\to L$.  
Then $J_{F_S}(f) = J_{F_R}(f)S$. 
\end{Lemma}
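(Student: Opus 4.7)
The plan is to leverage the chain of inclusions $R\subset S\subset R^{1/p}$, applying Proposition \ref{F-JacFlat} twice and Proposition \ref{F-JacPrimePower} once, then descending the resulting equality by faithful flatness. By Proposition \ref{F-JacFactors} one may first reduce to the case $f=g^\beta$ with $g\in R$ irreducible. I would then note that $R^{1/p}=K^{1/p}[x_1^{1/p},\ldots,x_n^{1/p}]$ is a free $S$-module of rank $p^n$ with basis the monomials $x_1^{a_1/p}\cdots x_n^{a_n/p}$ for $0\le a_i<p$; in particular $S\hookrightarrow R^{1/p}$ is faithfully flat, and both $S$ and $R^{1/p}$ are $F$-finite regular UFDs satisfying Notation \ref{SOFJ-I}.

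One inclusion, $J_{F_S}(f)\subset J_{F_R}(f)S$, is immediate from Proposition \ref{F-JacFlat} applied to the flat map $R\to S$. For the reverse, I would apply Proposition \ref{F-JacFlat} to $S\to R^{1/p}$ to obtain $J_{F_{R^{1/p}}}(f)\subset J_{F_S}(f)R^{1/p}$, and then invoke Proposition \ref{F-JacPrimePower} to identify the left side as $J_{F_R}(f)R^{1/p}$. Combining with the first inclusion extended to $R^{1/p}$, and using that $S\cdot R^{1/p}=R^{1/p}$, yields
\[
J_{F_R}(f)R^{1/p}\subset J_{F_S}(f)R^{1/p}\subset (J_{F_R}(f)S)\cdot R^{1/p}=J_{F_R}(f)R^{1/p},
\]
so $J_{F_S}(f)R^{1/p}=(J_{F_R}(f)S)R^{1/p}$. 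Since $J_{F_S}(f)\subset J_{F_R}(f)S$ already holds as ideals of $S$ and $S\to R^{1/p}$ is faithfully flat, the quotient $(J_{F_R}(f)S)/J_{F_S}(f)$ vanishes after tensoring with $R^{1/p}$, hence vanishes in $S$, giving the desired equality.

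The main obstacle relative to Lemma \ref{ExtensionPerfectCase} is that the description $D_R=R[\tfrac{1}{t!}\tfrac{\partial^t}{\partial x_i^t}]$ is unavailable when $K$ is not perfect, so the direct argument pulling $\Min_{F_S}(f)$ back to a $D_R$-submodule of $R_f/R$ breaks down. The key idea is that $R^{1/p}$ sits above both $R$, where Proposition \ref{F-JacPrimePower} controls $J_F$ under Frobenius roots, and $S$, where Proposition \ref{F-JacFlat} applies, allowing information to be transferred from $R$ to $S$ by faithfully flat descent through $R^{1/p}$ rather than by $D$-module techniques.
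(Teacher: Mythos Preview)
Your proof is correct and follows essentially the same route as the paper: both exploit the chain $R\subset S\subset R^{1/p}$, apply Proposition~\ref{F-JacFlat} to each inclusion, invoke Proposition~\ref{F-JacPrimePower} to identify $J_{F_{R^{1/p}}}(f)=J_{F_R}(f)R^{1/p}$, and then descend by faithful flatness of $S\to R^{1/p}$. Your reduction to $f=g^\beta$ via Proposition~\ref{F-JacFactors} is harmless but unnecessary here, since the cited propositions already apply to arbitrary $f$; otherwise the arguments are the same (and your quotient $(J_{F_R}(f)S)/J_{F_S}(f)$ is written in the correct order, whereas the paper's version contains a typo reversing it).
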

\begin{proof}
We have that $R\subset S\subset R^{1/p}.$
Then, by Proposition \ref{F-JacFlat}, 
$$J_{F_{R^{1/p}}}(f) \subset J_{F_S}(f) R^{1/p}\subset (J_{F_R}(f) S) R^{1/p}=J_{F_R}(f) R^{1/p}.$$
Since $J_{F_{R^{1/p}}}(f) =J_{F_R}(f) R^{1/p}$ by Proposition \ref{F-JacPrimePower}, 
$$0= J_{F_S}(f) R^{1/p}/ (J_{F_R}(f) S) R^{1/p}=(J_{F_S}(f) / J_{F_R}(f) S)\otimes_S R^{1/p}.$$
Therefore, $J_{F_S}(f) = J_{F_R}(f) S$ because $R^{1/p}$ is a faithfully flat $S$-algebra.
\end{proof}

\begin{Lemma}\label{ExtensionPerfectClosure}
Let $R =K[x_1,\ldots,x_n]$, where $K$ is an $F$-finite field. 
Let $L$ be the perfect closure of $K$, $S=L[x_1\ldots,x_n]$, and $R\to S$ the map induced by the extension $K\to L$.  
Then $J_{F_S}(f) = J_{F_R}(f)S$. 
\end{Lemma}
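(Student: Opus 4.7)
My plan is to adapt the argument of Lemma \ref{ExtensionPerfectCase}, replacing the $D$-module structure used there (which required $K$ to be perfect in order to identify $D_R$ with $R[\tfrac{1}{t!}\partial^t]$ and obtain $D_R \subset D_S$) with the $F$-module structure, which is always available. The key point is that the Frobenius on $R_f/R \hookrightarrow S_f/S$ is the restriction of the Frobenius on $S_f/S$ (both are just the $p$-th power map), so the intersection of any $F_S$-submodule of $S_f/S$ with $R_f/R$ is automatically an $F_R$-submodule, regardless of whether $K$ is perfect.

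First I reduce to $f = g^\beta$ with $g \in R$ irreducible using Proposition \ref{F-JacFactors}. The inclusion $J_{F_S}(f) \subset J_{F_R}(f) S$ comes from Proposition \ref{F-JacFlat} applied to the faithfully flat extension $R \hookrightarrow S$, so it remains to show $J_{F_R}(f) S \subset J_{F_S}(f)$. I set $M := \Min_{F_S}(f) \cap R_f/R \subset S_f/S$; by the previous observation, $M$ is an $F_R$-submodule of $R_f/R$. The essential step is proving $M \neq 0$: since $L$ is the perfect closure of $K$, the extension $L/K$ is purely inseparable and hence algebraic, so $S/fS$ is integral over $R/fR$. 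Mirroring the argument in Lemma \ref{ExtensionPerfectCase}, I pick a nonzero $\bar r \in J_{F_S}(f)/fS$ whose image in $S/fS$ is a non-zero-divisor and take a minimal monic integral relation $\bar r^n + a_{n-1}\bar r^{n-1} + \cdots + a_0 = 0$ over $R/fR$; minimality together with the non-zero-divisor condition forces $a_0 \neq 0$. Writing $-a_0 = \bar r \cdot h$ in $S/fS$ yields $a_0/f = h \cdot (r/f) \in \Min_{F_S}(f)$, which lies in $R_f/R$ since $a_0 \in R$, and is nonzero since $a_0 \notin fR$.

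With $M \neq 0$ in place, the conclusion follows quickly: because $R_f/R = R_g/R$ with $g$ irreducible, Lemma \ref{WellDefMod} identifies $\Min_{F_R}(f)$ as the unique simple $F_R$-submodule of $R_f/R$, forcing $\Min_{F_R}(f) \subset M \subset \Min_{F_S}(f)$. Then for every $a \in J_{F_R}(f)$, we have $a/f \in \Min_{F_R}(f) \subset \Min_{F_S}(f)$, so $a \in J_{F_S}(f)$, giving $J_{F_R}(f) S \subset J_{F_S}(f)$. The main obstacle is producing the witness $\bar r$: writing $g = u \prod_i g_i^{n_i}$ for the factorization of $g$ into irreducibles in $S$, the associated primes of $fS$ are the $g_i S$, so I need $r \in J_{F_S}(f) \setminus \bigcup_i g_i S$. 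By prime avoidance this reduces to showing $J_{F_S}(f) \not\subset g_i S$ for each $i$; to establish this I would use the decomposition $J_{F_S}(f) = \sum_i \bigl(\prod_{j \neq i} g_j^{n_j \beta}\bigr) J_{F_S}(g_i^{n_i \beta})$ coming from Proposition \ref{F-JacFactors}, together with the facts that $\prod_{j \neq i} g_j^{n_j \beta}$ is coprime to $g_i$ and each $J_{F_S}(g_i)$ properly contains $g_iS$ by Proposition \ref{WellDefIdeal}.
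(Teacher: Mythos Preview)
Your central claim---that $M:=\Min_{F_S}(f)\cap R_f/R$ is an $F_R$-submodule of $R_f/R$ because ``the Frobenius restricts''---is where the argument breaks. Stability under the $p$-linear Frobenius endomorphism $\psi$ is \emph{not} the same as being an $F$-submodule in Lyubeznik's sense: an $F$-submodule $N\subset R_f/R$ must satisfy $\theta(N)=F_R^*N$ under the structure isomorphism, equivalently $N$ must equal the $R$-span of $\psi(N)$, not merely contain it. Concretely, via Lemma~\ref{Bijection} an $F_R$-submodule corresponds to an ideal $I\ni f$ with $(I^{[p]}:f^{p-1})=I$, and the submodule is then $\varinjlim I^{[p^e]}/f^{p^e}R$. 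Your $M$ is instead $\varinjlim \bigl(J_{F_S}(f)^{[p^e]}\cap R\bigr)/f^{p^e}R$, so for it to be an $F_R$-submodule you would need $\bigl(J_{F_S}(f)\cap R\bigr)^{[p^e]}=J_{F_S}(f)^{[p^e]}\cap R$. That fails in general for purely inseparable $R\subset S$: already with $R=\FF_p(u)[x]\subset S=\FF_p(u)^{1/p^\infty}[x]$ and $J=(x+u^{1/p})S$ one has $J\cap R=(x^p+u)R$ while $J^{[p]}\cap R=(x^p+u)R\supsetneq (x^{p^2}+u^p)R=(J\cap R)^{[p]}$. So $M$ is $\psi$-stable but you have not shown it is an $F_R$-submodule, and without that Lemma~\ref{WellDefMod} does not apply. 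Worse, the containment $\Min_{F_R}(f)\subset\Min_{F_S}(f)$ you are trying to extract is \emph{equivalent} to $J_{F_R}(f)S\subset J_{F_S}(f)$, the very inclusion you want.

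The paper sidesteps this by exploiting the filtered structure $S=\bigcup_e S^e$ with $S^e=K^{1/p^e}[x_1,\dots,x_n]$. It chooses generators $h_j$ of $J_{F_S}(f)$ together with witnesses $c_{i,j}$ to the relations $f^{p-1}h_j=\sum_i c_{i,j}h_i^{p}$, finds a level $N$ containing all of them, and sets $I=(h_1,\dots,h_\ell)S^N$. Since $S^N\hookrightarrow S$ splits, $J_{F_S}(f)\cap S^N=I$; the relations descend, so $(I^{[p]}:f^{p-1})=I$ in $S^N$; and $I\neq fS^N$ by the same integrality argument you use. Now Proposition~\ref{WellDefIdeal} gives $J_{F_{S^N}}(f)\subset I$, hence $J_{F_{S^N}}(f)S=J_{F_S}(f)$, and the finite step $R\to S^N$ is handled by iterating Lemma~\ref{ExtensionNotPerfect} (which rests on Proposition~\ref{F-JacPrimePower}). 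The descent to a finite level and the use of the splitting are precisely what replace the unjustified ``$M$ is an $F_R$-submodule'' step.
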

\begin{proof}
We may assume that $f=g^n$ for an irreducible $g\in R$ by Proposition \ref{F-JacFactors}.
Let $S^e= K^{1/p^e}[x_1,\ldots,x_n].$
Let $h_1,\ldots, h_\ell$ denote a set of generators for $J_{F_S}(f)$.
In this case, $(J_{F_S}(f)^{[p]}:f^{p-1})=J_{F_S}(f).$ Then there exist $c_{i,j}\in S$ such that
$$
f^{p-1}h_j=\sum c_{i,j} h^p_j.
$$
Since $S=\bigcup_e S^e,$ there exists $N$ such that $c_{i,j},h_j\in S^N.$
Let $I\subset R^N$ be the ideal generated by $h_1,\ldots, h_\ell.$ 
We note that $IS=J_{F_S}(f)$; moreover,   $J_{F_S}(f)\cap S^N=I$ because $S^e\to S$ splits for every $e\in\NN$.

We claim that $(I^{[p]}:f^{p-1})=I$.
We have that $f^{p-1}h_\ell \in I^{[p]}$ by our choice of $N$ and so $I\subset (I^{[p]}:f^{p-1}).$
If $g\in (I^{[p]}:f^{p-1}), $ then $f^{p-1}g\in I^{[p]}\subset J_{F_S}(f)^{[p]}$ and $g\in J_{F_S}(f)\cap S^N=I.$
   
As in the proof of Lemma \ref{ExtensionPerfectCase}, $(J_{F_S}(f)/fS)\cap (S^N/fS^N)\neq 0$ and then $ J_{F_S}(f)\cap S^N=I\neq fS.$
Therefore, $J_{F_{S^N}} (f)\subset I$ by Proposition \ref{WellDefIdeal}. Hence, 
$$
J_{F_{S^N}} (f)S\subset IS=J_{F_{S}} (f)\subset J_{F_{S^N}} (f) S,
$$
and the result follows because 
$$J_{F_{R}} (f)S=(J_{F_{R}} (f)S^N)S=J_{F_{S^N}} (f)S.$$
\end{proof}

\begin{Teo}\label{F-JacAlgExt}
Let $R =K[x_1,\ldots,x_n]$, where $K$ is an $F$-finite field. 
Let $L$ be an algebraic extension of $K$, $S=L[x_1\ldots,x_n]$, and $R\to S$ the map induced by the extension $K\to L$.  
Then $J_{F_S}(f) = J_{F_R}(f)S$. 
\end{Teo}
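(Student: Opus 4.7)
The plan is to reduce the general algebraic extension $K\to L$ to the two already-treated special cases by inserting perfect closures as intermediate fields. I would let $K^{\mathrm{perf}}$ and $L^{\mathrm{perf}}$ denote the perfect closures of $K$ and $L$, and set $R^{\mathrm{perf}}=K^{\mathrm{perf}}[x_1,\ldots,x_n]$ and $S^{\mathrm{perf}}=L^{\mathrm{perf}}[x_1,\ldots,x_n]$. The key observation is that since $L/K$ is algebraic, every element of $L^{\mathrm{perf}}$ is a $p^n$-th root of an element of $L$, which is algebraic over $K$ and hence over $K^{\mathrm{perf}}$; so $L^{\mathrm{perf}}/K^{\mathrm{perf}}$ is an algebraic extension of the perfect field $K^{\mathrm{perf}}$. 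Both $R^{\mathrm{perf}}$ and $S^{\mathrm{perf}}$ are $F$-finite polynomial rings (their coefficient fields are perfect) and therefore fit the framework of Notation \ref{SOFJ-I}.

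First I would apply Lemma \ref{ExtensionPerfectClosure} to the extension $K\subset K^{\mathrm{perf}}$ to obtain $J_{F_{R^{\mathrm{perf}}}}(f)=J_{F_R}(f)R^{\mathrm{perf}}$, and then Lemma \ref{ExtensionPerfectCase}, applied to the algebraic extension $K^{\mathrm{perf}}\subset L^{\mathrm{perf}}$ over the perfect base $K^{\mathrm{perf}}$, to obtain $J_{F_{S^{\mathrm{perf}}}}(f)=J_{F_{R^{\mathrm{perf}}}}(f)S^{\mathrm{perf}}=J_{F_R}(f)S^{\mathrm{perf}}$. Separately, Proposition \ref{F-JacFlat} applied to the flat map $S\to S^{\mathrm{perf}}$ gives $J_{F_{S^{\mathrm{perf}}}}(f)\subset J_{F_S}(f)S^{\mathrm{perf}}$, and combining these two facts yields the inclusion $J_{F_R}(f)S^{\mathrm{perf}}\subset J_{F_S}(f)S^{\mathrm{perf}}$.

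To close the argument I would use that $L\to L^{\mathrm{perf}}$ is a faithfully flat field extension, so $S\to S^{\mathrm{perf}}$ is faithfully flat as well; descending the previous containment along this faithfully flat map yields $J_{F_R}(f)S\subset J_{F_S}(f)$. The reverse containment $J_{F_S}(f)\subset J_{F_R}(f)S$ is Proposition \ref{F-JacFlat} applied directly to $R\to S$, giving the desired equality $J_{F_S}(f)=J_{F_R}(f)S$. The main obstacle is the bookkeeping needed to confirm that $L^{\mathrm{perf}}/K^{\mathrm{perf}}$ is algebraic and that all the intermediate rings meet the $F$-finiteness, regular UFD, and finite $D$-module length hypotheses of Notation \ref{SOFJ-I}; these are routine field-theoretic checks, but are essential for the invocations of Lemmas \ref{ExtensionPerfectCase} and \ref{ExtensionPerfectClosure} to be legitimate.
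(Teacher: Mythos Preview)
Your proposal is correct and follows essentially the same route as the paper: both insert the perfect closures $K^*\subset L^*$ and the corresponding polynomial rings, apply Lemma~\ref{ExtensionPerfectClosure} and Lemma~\ref{ExtensionPerfectCase} along the chain $R\to R^*\to S^*$, and then descend via faithful flatness of $S\to S^*$, with Proposition~\ref{F-JacFlat} supplying the reverse containment. The only cosmetic difference is that the paper invokes Lemma~\ref{ExtensionPerfectClosure} a second time (for $S\to S^*$) to obtain the equality $J_{F_{S^*}}(f)=J_{F_S}(f)S^*$, whereas you use only the containment coming from Proposition~\ref{F-JacFlat}; either version suffices.
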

\begin{proof}
It suffices to show $J_{F_R}(f)S\subset J_{F_S}(f)  $ by Proposition \ref{F-JacFlat}.
Let $K^*$ and $L^*$ denote the perfect closure of $K$ and $L$ respectively. 
Let $R^*=K^*[x_1,\ldots,x_n]$ and $S^*=L^*[x_1,\ldots,x_n].$

Then,
$$
(J_{F_R}(f)S)S^*
J_{F_{  R}} (f) S^*=
(J_{F_{ R }} (f)R^* )S^*=
J_{F_{ R^* }} (f) S^*=
J_{F_{ S^* }} (f) =
J_{F_{ S }} (f)S^*
$$
by Lemma \ref{ExtensionPerfectCase} and \ref{ExtensionPerfectClosure}.
Therefore,
$$
(J_{F_{ R }} (f)S/J_{F_{ S }} (f))\otimes_S S^*=
(J_{F_{ R }} (f)S)S^*/(J_{F_{ S }} (f))S^*=0.
$$
Hence $J_{F_{ R }} (f)S/J_{F_{ S }} (f)=0$ because $S^*$ is a faithfully flat $S$-algebra.
\end{proof}

\begin{Ex}\label{ExNotTotal}
Let  $R=\FF_3[x,y],$ and $f=x^2+y^2$ and $m=(x,y)$.
We have that $(m^{[p]}: f^{p-1})=m$. Then, $J_{F_R}(f)\subset m.$
Let $\FF_3[i] $ the extension of $\FF_3$ by $\sqrt{-1},$ $S=L[x,y]$ and $\phi:R\to S$ 
be the inclusion given by the field extension.
Then, $J_{F_S}(f)=(x,y)S$ by Proposition \ref{F-JacFactors} because $x^2+y^2=(x+iy)(x-iy).$
Since $\phi$ is a flat extension, $J_{F_S}(f)\subset J_{F_R}(f)S.$
Then, $m=R\cap J_{F_S}(f)\subset R\cap J_{F_R}(f)S.$ Hence, $J_F(f)=m.$
\end{Ex}

\begin{Prop}\label{AMBL-Ideals}
Let  $f\in R$ be an irreducible element. Then,
$$
J_F(f)=\bigcap_{\gcd(a,f)=1}
\left(\bigcup_{e\in\NN} \left(\left( \left(f^{p^e-1}a\right)^{[1/p^e]},f\right)^{[p^e]}: f^{p^e-1}\right)\right)
$$
\end{Prop}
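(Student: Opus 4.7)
The plan is to interpret the right-hand side as the ideal in $R$ cut out, via the injection $\sigma\colon R/fR\hookrightarrow R_f/R$, $[b]\mapsto b/f$, by
$$\Bigl(\bigcap_{\gcd(a,f)=1}D_R\cdot(a/f)\Bigr)\cap \sigma(R/fR),$$
and then identify this $D_R$-submodule with $\Min_F(f)\cap\sigma(R/fR)$, which by Definition \ref{F-Jac Def} pulls back to $J_F(f)$.

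First, I would show that for each fixed $a\in R$ with $\gcd(a,f)=1$, the ideal
$$K_a:=\bigcup_{e\in\NN}\Bigl(\bigl((f^{p^e-1}a)^{[1/p^e]},f\bigr)^{[p^e]}:f^{p^e-1}\Bigr)$$
is precisely the preimage in $R$ of $D_R\cdot(a/f)\cap \sigma(R/fR)$. Writing $D_R=\bigcup_e D_R^{(e)}$ with $D_R^{(e)}=\Hom_{R^{p^e}}(R,R)$ and using $f^{p^e}\in R^{p^e}$, one computes
$$D_R^{(e)}\cdot(a/f)=D_R^{(e)}(af^{p^e-1})/f^{p^e}\subset R_f.$$
By \cite[Proposition 3.1]{AMBL}, $D_R^{(e)}(af^{p^e-1})=\bigl((af^{p^e-1})^{[1/p^e]}\bigr)^{[p^e]}$. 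An element $b\in R$ represents a class in $D_R^{(e)}\cdot(a/f)$ modulo $R$ in $R_f/R$ if and only if there is $c\in R$ with $bf^{p^e-1}-cf^{p^e}\in \bigl((af^{p^e-1})^{[1/p^e]}\bigr)^{[p^e]}$, i.e.,
$$bf^{p^e-1}\in\bigl((af^{p^e-1})^{[1/p^e]}\bigr)^{[p^e]}+(f^{p^e})=\bigl((f^{p^e-1}a)^{[1/p^e]},f\bigr)^{[p^e]},$$
using the unconditional identity $(I+J)^{[p^e]}=I^{[p^e]}+J^{[p^e]}$. Taking unions over $e$ yields the desired description of $K_a$.

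Second, I would prove that $\bigcap_{\gcd(a,f)=1}D_R\cdot(a/f)=\Min_F(f)$. For $\supseteq$: if $\gcd(a,f)=1$ then $a\notin fR$, so $a/f\ne 0$ in $R_f/R$; thus $D_R\cdot(a/f)$ is a nonzero $D_R$-submodule, and Proposition \ref{DModFMod} (the unique simple $D_R$-submodule of $R_f/R$ is $\Min_F(f)$, which is therefore contained in every nonzero one) gives $\Min_F(f)\subseteq D_R\cdot(a/f)$. For $\subseteq$: Lemma \ref{Intersection} applied to the two nonzero submodules $\sigma(R/fR)$ and $\Min_F(f)$ of $R_f/R$ shows their intersection is nonzero, so $J_F(f)\supsetneq fR$. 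Pick any $a_0\in J_F(f)\setminus fR$; since $f$ is irreducible, $\gcd(a_0,f)=1$, and by construction $a_0/f\in\Min_F(f)$, whence $D_R\cdot(a_0/f)\subseteq\Min_F(f)$. Combined with the inclusion just established and the simplicity of $\Min_F(f)$, this forces $D_R\cdot(a_0/f)=\Min_F(f)$, so the full intersection is contained in this single term and hence in $\Min_F(f)$.

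Combining the two steps and intersecting with $\sigma(R/fR)$ gives $\bigcap_a K_a=J_F(f)$, both sides containing $fR$ and cutting out the same submodule via $\sigma$. The main obstacle I anticipate is the first step: carefully tracking how an element of $D_R^{(e)}\cdot(a/f)$, viewed modulo $R$ inside $R_f/R$, corresponds through clearing the denominator $f^{p^e}$ to an element of the colon ideal, and in particular recognising the ``$+\,(f^{p^e})$'' as encoding precisely the ambiguity of lifting a class modulo $R$ before invoking \cite[Proposition 3.1]{AMBL} to rewrite $D_R^{(e)}(af^{p^e-1})$ via the bracket-power construction.
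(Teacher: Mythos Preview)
Your proposal is correct and follows essentially the same approach as the paper's proof: both identify $\Min_F(f)$ with $\bigcap_{\gcd(a,f)=1}D_R\cdot(a/f)$ via Proposition~\ref{DModFMod}, and both translate the membership $b/f\in D^{(e)}\cdot(a/f)$ in $R_f/R$ into the colon condition $bf^{p^e-1}\in\bigl((f^{p^e-1}a)^{[1/p^e]},f\bigr)^{[p^e]}$ using the identity $D^{(e)}(f^{p^e-1}a)=\bigl((f^{p^e-1}a)^{[1/p^e]}\bigr)^{[p^e]}$. Your justification of the equality $\bigcap_a D_R\cdot(a/f)=\Min_F(f)$ (by exhibiting a specific $a_0\in J_F(f)\setminus fR$ with $D_R\cdot(a_0/f)=\Min_F(f)$) is slightly more explicit than the paper's, which simply asserts that the intersection of all nonzero $D$-submodules equals the intersection of the cyclic ones of the form $D\cdot(a/f)$, but the underlying reason is the same.
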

\begin{proof}
We have that $\Min_F(f) $
is the intersection of all nonzero $D$-submodules of $R_f/R$ by Proposition \ref{DModFMod}.
In particular, $\Min_F(f)$ is the intersection of all nonzero cyclic $D$-modules generated by elements $a/f\in R_f/R.$
Hence, 
\begin{align*}
J_F(f)/f&=\bigcap_{\gcd(a,f)=1} \left( \left( D\cdot a/f \right)\cap R/f \right) \\
& = \bigcap_{\gcd(a,f)=1}\left( \bigcup_{e\in\NN}\left( D^{(e)}\cdot a/f\cap R/f\right)\right)\\
\end{align*}
We have that $b\in J_F(f)$ if $b/f \in \bigcap_{\gcd(a,f)=1} \bigcup_{e\in\NN}\left( D^{(e)}\cdot a/f\right)$,
so, for every $a\in R$ such that $\gcd(a,f)=1,$   there exists an $e\in\NN$ such that $b/f\in D^{(e)}\cdot a/f.$ Thus, there exists
$\phi\in\Hom_{R^{p^e}}(R,R)$ such that 
$$
\phi(a/f)=1/f^{p^e} \phi(f^{p^e-1} a)=b/f+r
$$
Therefore, after multiplying by $f^{p^e},$ we have that 
$$b\in \bigcap_{\gcd(a,f)=1}\left(\bigcup_{e\in\NN} \left(\left( \left(f^{p^e-1}a\right)^{[1/p^e]},f\right)^{[p^e]}: f^{p^e-1}\right)\right)$$
because $\left((f^{p^e-1}a)^{[1/p^e]}\right)^{[p^e]}=D^{(e)}\left( f^{p^e-1}a\right).$ 

On the other hand, if $$b\in\bigcap_{\gcd(a,f)=1}\bigcup_{e\in\NN} \left( \left( \left( f^{p^e-1}a\right)^{[1/p^e]},f\right)^{[p^e]}: f^{p^e-1}\right),$$ then
for every $a\in R$  such that $\gcd(a,f)=1,$ there exists an $e\in\NN$ and
$\phi\in\Hom_{R^{p^e}}(R,R)$ such that 
$$
f^{p^{e}-1}b=\phi(f^{p^e-1}a)+f^{p^e}r
$$
because $\left(\left(f^{p^e-1}a),f\right)^{[1/p^e]}\right)^{[p^e]}=D^{(e)}(f^{p^e-1}a).$
Therefore, after dividing by $f^{p^e},$ we have that $b/f\in \bigcap_{\gcd(a,f)=1} \bigcup_{e\in\NN}\left( D^{(e)}\cdot a/f\right)$, and
then $b\in J_F(f)$.
\end{proof}
\begin{Teo}\label{F-Jac F-Reg}
Let $f\in R$ be such that $R/fR$ is a $F$-pure ring. If $J_F(f)=R,$ then $R/fR$ is strongly $F$-regular.
\end{Teo}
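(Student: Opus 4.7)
The plan is to reduce to the local irreducible case and then verify the Fedder--Glassbrenner criterion for strong $F$-regularity of a hypersurface. Strong $F$-regularity of $R/fR$ may be checked after localizing at each maximal ideal of $R$ containing $f$, and by Proposition \ref{F-JacLocalization} the hypothesis $J_F(f)=R$ passes to any such localization; $F$-purity likewise descends. I therefore assume that $(R,m)$ is local with $f \in m$. Under these hypotheses $J_F(f)=R$ forces $f$ to be (up to a unit) irreducible: $F$-purity makes $R/fR$ reduced, so $f = f_1 \cdots f_s$ for distinct irreducibles, and Proposition \ref{F-JacFactors} immediately yields $J_F(f) \subseteq m$ unless $s=1$, while Proposition \ref{F-JacPower} similarly rules out $f = g^n$ with $n > 1$.

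With $f$ irreducible I take an arbitrary $c \in R \setminus fR$ (equivalently $\gcd(c,f)=1$). By Proposition \ref{AMBL-Ideals}, the hypothesis $J_F(f)=R$ produces $e \in \NN$ and $\phi \in D^{(e)}_R = \Hom_{R^{p^e}}(R,R)$ such that
$$\phi(f^{p^e-1} c) = f^{p^e-1} u$$
for some unit $u \in R$; this uses the identity $\left((g)^{[1/p^e]}\right)^{[p^e]} = D^{(e)} g$ and the fact that $1 + fR \subseteq R^{\times}$ in the local ring.

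The key technical point is that every $\phi \in D^{(e)} = \Hom_{R^{p^e}}(R,R)$ preserves the ideal $m^{[p^e]}$: indeed $m^{[p^e]}$ is generated as an $R$-module by $p^e$-th powers of elements of $m$, and those generators are $R^{p^e}$-invariants. Suppose for contradiction that $c f^{p^e-1} \in m^{[p^e]}$ for every $e$. Then $\phi(cf^{p^e-1}) = f^{p^e-1} u \in m^{[p^e]}$, so $f^{p^e-1} \in m^{[p^e]}$. But $F$-purity of $R/fR$ together with Fedder's criterion says $f^{p-1} \notin m^{[p]}$, and by iteration this gives $f^{p^e-1} \notin m^{[p^e]}$ for every $e$, a contradiction. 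Therefore for each $c \notin fR$ there is some $e$ with $cf^{p^e-1} \notin m^{[p^e]}$, which is precisely the Fedder--Glassbrenner criterion for the hypersurface $R/fR$ to be strongly $F$-regular.

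The main obstacle is the initial bookkeeping: reducing the global statement to the local irreducible case via Propositions \ref{F-JacLocalization}, \ref{F-JacFactors}, and \ref{F-JacPower}, and confirming that Proposition \ref{AMBL-Ideals} (stated for irreducible $f$) really does apply after this reduction. Once in the local irreducible setting, the argument is a clean pairing of Proposition \ref{AMBL-Ideals} with the elementary $R^{p^e}$-linearity of $D^{(e)}$-operators.
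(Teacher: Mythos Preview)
Your proof is correct and tracks the paper's argument closely: both reduce to the local case, note that $F$-purity (hence reducedness) together with Proposition~\ref{F-JacFactors} forces $f$ irreducible, invoke Proposition~\ref{AMBL-Ideals}, and feed in Fedder's criterion $f^{p^e-1}\notin m^{[p^e]}$. The divergence is only in the endgame. From $f^{p^e-1}\in\bigl((f^{p^e-1}a)^{[1/p^e]},f\bigr)^{[p^e]}$ together with $f^{p^e-1}\notin m^{[p^e]}$, the paper concludes $(f^{p^e-1}a)^{[1/p^e]}=R$, extracts $\phi\in D^{(e)}$ with $\phi(f^{p^e-1}a)=1$, and uses the induced map $[x]\mapsto[\phi(f^{p^e-1}x)]$ to exhibit $R/fR$ as a simple $D_{R/fR}$-module, finishing via Smith's criterion \cite[Theorem~2.2]{DModFSplit}. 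You instead use the $R^{p^e}$-linearity of $\phi$ (hence $\phi(m^{[p^e]})\subseteq m^{[p^e]}$) to deduce $cf^{p^e-1}\notin m^{[p^e]}$ directly, which is exactly the Fedder--Glassbrenner criterion for strong $F$-regularity of a hypersurface. Your route is slightly more elementary in that it bypasses the $D$-module characterization of strong $F$-regularity; the paper's route makes the $D$-module content explicit. One minor remark: your appeal to Proposition~\ref{F-JacPower} to exclude $f=g^n$ with $n>1$ is unnecessary, since $F$-purity already makes $R/fR$ reduced and hence $f$ squarefree.
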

\begin{proof}
We may assume that $(R,m,K)$ is local because being reduced, $F$-pure, and $F$-regular are local properties for $R/fR$. 
Then, $f$ is irreducible by Proposition \ref{F-JacFactors}.
Since $J_F(f)=R$, for every $a$ such that $\gcd(a,f)=1$ there exits an $e\in\NN$ such that
$R=\left( \left(\left(f^{p^e-1}a\right)^{[1/p^e]},f\right)^{[p^e]}: f^{p^e-1}\right)$ 
by Lemma \ref{AMBL-Ideals}. Then, 
$f^{p^e-1}\in \left(\left(f^{p^e-1}a\right)^{[1/p^e]},f\right)^{[p^e]}.$
Since $f^{p^e-1}\not\in m^{[p^e]}$ for every $e\in\NN$ by Fedder's Criterion ,
$R=\left(f^{p^e-1}a\right)^{[1/p^e]};$ otherwise,  $(\left(f^{p^e-1}a\right)^{[1/p^e]},f)\subset m.$
Then, there exist a morphism $\phi:R \to R^{p^e}$ of $R^{p^e}$-modules such that
$\phi(f^{p^e-1}a)=1.$ Let $\varphi: R/fR\to R/fR$ be the morphism defined by $\varphi([x])=[\phi(f^{p^e-1}x)].$
We note that $\varphi$ is a well defined morphism of $(R/fR)^p$-modules such that $\varphi([a])=1.$ 
Then, $R/fR$ is a simple $D(R/fR)$-module. Hence, $R/fR$ is strongly $F$-regular  \cite[Theorem $2.2$]{DModFSplit}.
\end{proof}
\begin{Rem}
\begin{itemize}
\item The result of the previous theorem is a consequence of a result of Blickle \cite[Corollary $4.10$]{Manuel}, as $R/fR$ is a Gorenstein ring. However,
our proof is different from the one given there.
\item $J_{F_R}(f)=R$ does not imply that $R/fR$ is $F$-pure. Let $K=\hbox{\rm{Frac}}(\FF_2[u])$ be the fraction field of the 
polynomial ring $\FF_2[u]$, $R=K[[x,y]]$, and $f=x^2+uy^2.$ 
Then, $f$ is an irreducible element such that $R/fR$ is not pure because $f\in (x,y)^{[2]}R$.
Let $L=K^{1/2}$, $S=L[[x,y]]$ and $ R\to S$ be the inclusion given by the extension $K\subset L.$
Thus, $f=(x-u^{1/2}y)^2$ in $S,$ and then $J_{F_S}(S)=S\subset J_{F_R} (f)S.$ 
Then, $R=J_{F_S}(S)\cap R=J_{F_R} (f)S\cap R=J_{F_R} (f)$
because $R\to S$ splits. Hence, $J_{F_R}(f)=R$ and $R/fR$ is not $F$-pure. 
\end{itemize}
\end{Rem}

\subsection{Definition for rings essentially of finite type over an $F$--finite local ring.}\label{SecFJacVar}

\begin{Notation}\label{SOFJ-II}
Throughout this section $R$ denotes a ring 
essentially of finite type over an $F$--finite
local ring.
Let $f\in R,$ $\pi:R\to R/fR$ be the quotient morphism. 
If $R/fR$ is reduced, $\tau_f$ denotes the pullback of the test ideal of $R/fR,$
$\pi^{-1}(\tau(R/fR)).$
\end{Notation}

Under the hypotheses on $R$ in Notation \ref{SOFJ-II}, there is an $F$-module and a $D$-module of $R_f/R$
called the intersection homology $\cL(R,R/fR)$ \cite{ManuelThesis,Manuel}. We have that for every maximal ideal
 $m\subset R,$ $(R\setminus m)^{-1}\cL(R,R/fR)=\Min_{F_{R_m}}(f).$

\begin{Def}\label{DefII}
Let $R/fR\subset R_f/R$ be the inclusion morphism $1\mapsto \frac{1}{f}.$ 
We define the $F$-Jacobian, $J_f(f)$ as the pullback to $R$ of $(R/fR)\cap\cL(R,R/fR).$
\end{Def}

\begin{Lemma}\label{Flag}
Suppose that $R/fR$ is reduced.
Let $I^j(f)=(\tau_f^{[p^{j-1}]}: f^{p^{j-1}-1}).$
Then $I^j(f)\subset I^{j+1}(f)$ and $I^{j+1}(f)=(I^{j}(f)^{[p]}:f^{p-1}).$
\end{Lemma}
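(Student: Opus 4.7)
The plan is to prove the equality $I^{j+1}(f) = (I^{j}(f)^{[p]} : f^{p-1})$ first, and then derive the inclusion $I^{j}(f) \subset I^{j+1}(f)$ from a single base case by induction.

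For the equality, the essential tool is the flatness of Frobenius on the regular ring $R$, which gives $(J : h)^{[p]} = (J^{[p]} : h^{p})$ for any ideal $J$ and element $h$, together with the formal identity $((K : a) : b) = (K : ab)$. Starting from $I^{j}(f) = (\tau_{f}^{[p^{j-1}]} : f^{p^{j-1}-1})$, one computes
\[
I^{j}(f)^{[p]} = \bigl(\tau_{f}^{[p^{j-1}]} : f^{p^{j-1}-1}\bigr)^{[p]} = \bigl(\tau_{f}^{[p^{j}]} : f^{(p^{j-1}-1)p}\bigr),
\]
and then
\[
\bigl(I^{j}(f)^{[p]} : f^{p-1}\bigr) = \bigl(\tau_{f}^{[p^{j}]} : f^{(p^{j-1}-1)p + (p-1)}\bigr) = \bigl(\tau_{f}^{[p^{j}]} : f^{p^{j}-1}\bigr) = I^{j+1}(f).
\]

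Given this equality, the containment $I^{j}(f) \subset I^{j+1}(f)$ is equivalent to $f^{p-1} I^{j}(f) \subset I^{j}(f)^{[p]}$. Moreover, once $I^{j} \subset I^{j+1}$ is known, applying $(-)^{[p]}$ and then the colon operation $(- : f^{p-1})$ to both sides of the above equality gives $I^{j+1} \subset I^{j+2}$, by monotonicity. So the containments for all $j$ propagate from the single base case $I^{1}(f) = \tau_{f} \subset I^{2}(f)$, i.e., $f^{p-1}\tau_{f} \subset \tau_{f}^{[p]}$.

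The main obstacle is precisely this base case. To establish it I would invoke the Cartier-operator characterization of the test ideal of the reduced $F$-finite ring $R/fR$: $\tau(R/fR)$ is the smallest ideal, not contained in any minimal prime, that is stable under every $R/fR$-linear map $F_{*}(R/fR) \to R/fR$. By Fedder's identification
\[
\Hom_{R/fR}\bigl(F_{*}(R/fR),\, R/fR\bigr) \;\cong\; F_{*}\bigl(f^{p-1}R/f^{p}R\bigr),
\]
every such operator lifts to an $R$-linear map $F_{*}R \to R$ precomposed with multiplication by an element of $f^{p-1}R$. Stability of $\tau_{f}$ under all these operators, combined with the projectivity of $F_{*}R$ as an $R$-module (equivalently, Frobenius flatness), translates into the ideal-theoretic statement $f^{p-1}\tau_{f} \subset \tau_{f}^{[p]}$. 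This closes the base case and completes the induction.
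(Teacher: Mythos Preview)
Your proof is correct, and your derivation of the equality $I^{j+1}(f) = (I^{j}(f)^{[p]} : f^{p-1})$ via Frobenius flatness and the colon identity is in fact more explicit than the paper, which leaves that step under ``the result follows.'' The genuine difference is in the base case $f^{p-1}\tau_f \subset \tau_f^{[p]}$. The paper localizes and then cites Blickle's result that $\tau_f/fR$ is the minimal root of $\Min_F(f)=\cL(R,R/fR)$; being a root is precisely the statement that the generating morphism $\tau_f/fR \xrightarrow{f^{p-1}} \tau_f^{[p]}/f^pR$ is well-defined. You instead go through the uniform $F$-compatibility of $\tau(R/fR)$ under all $p^{-1}$-linear maps together with Fedder's identification $\Hom_{R/fR}(F_{*}(R/fR),R/fR)\cong F_{*}(f^{p-1}R/f^{p}R)$. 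Your route is more self-contained and avoids the intersection-homology $D$-module black box; the paper's route is a one-line citation that simultaneously sets up the root interpretation used in the very next proposition to show $J_F(f)=\bigcup_j I^j(f)$. One caveat: your Fedder identification tacitly assumes $\Hom_R(F_{*}R,R)$ is free of rank one over $F_{*}R$, which holds locally for regular $F$-finite $R$; it would be cleaner to localize first, as the paper does, before invoking it.
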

\begin{proof}
Since, in this case, the test ideal of $R/fR$ commutes with localization, we may assume that $R$ is a local ring.
We have that $\tau_f/fR$ is the minimal root for $\Min_F(f)$ \cite[Theorem $4.6$]{Manuel}. 
Then, $f^{p-1} I^1(f)=f^{p-1}\tau_f\subset \tau_f^{[p^e]}=I^1(f) ^{[p^e]}$.
Thus, $I^1\subset I^2$ and $f^{p-1} I^2\subset I^{[p]}_1.$ 
Moreover, $I^2/fR$ is also a root for $\Min_F(f)$ because
$I^2(f)/I^1(f)$ is the kernel of the map 
$$
R/I^1(f)\FDer{f^{p-1}} R/I^1(f)^{[p]}.
$$
Inductively, we obtain that  $I^j\subset I^{j+1},$ $f^r I^{j+1}\subset I^j$ and that
$I^j/fR$ is  a root for $\Min_F(f)$
for every $j\in\NN$ and the result follows.
\end{proof}

\begin{Prop}\label{DefFlag}
Suppose that $R/fR$ is reduced.
Then, $J_F(f)=\bigcup_j I^j_R(f).$
\end{Prop}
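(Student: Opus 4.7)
The plan is first to reduce to the case where $R$ is local, and then to apply the ideal/$F$-submodule correspondence of Lemma \ref{Bijection} to the ideal at which the $F$-flag stabilizes. Because $R/fR$ is reduced, the test ideal of $R/fR$ commutes with localization in the setting of Notation \ref{SOFJ-II} (see Remark \ref{F-Sing Gorenstein}), so the pullback $\tau_f$ commutes with localization as well, and since colon ideals and Frobenius powers are preserved by flat localization the whole flag $I^j(f)$ commutes with localization. On the other hand, by the property of the intersection homology module recalled just before Definition \ref{DefII}, $(R\setminus m)^{-1}\cL(R,R/fR)=\Min_{F_{R_m}}(f)$ for every maximal ideal $m$, so $J_F(f)$ also commutes with localization. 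It therefore suffices to prove the equality after localizing at each maximal ideal, so I assume $R$ is local.

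By the result of Blickle used in the proof of Lemma \ref{Flag}, $\tau_f/fR$ is the minimal root of $\Min_F(f)$, so the morphism $I^1(f) = \tau_f \FDer{f^{p-1}} \tau_f^{[p]}$ generates $\Min_F(f)$ as an $F$-module. Combining Lemma \ref{Flag} (which gives $I^j\subset I^{j+1}$ together with $f^{p-1} I^{j+1}\subset (I^j)^{[p]}$) with Lemma \ref{GeneratesSame} applied stepwise (with $\ell=1$), each $I^j$ generates the same $F$-module as $\tau_f$, namely $\Min_F(f)$. Since $R$ is Noetherian the chain stabilizes at some $I^N$, which then satisfies $I^N = ((I^N)^{[p]} : f^{p-1})$. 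Because $f\in \tau_f\subset I^N$, Lemma \ref{Bijection} assigns to $I^N$ the $F$-submodule of $R_f/R$ generated by the root $I^N/fR\FDer{f^{p-1}} (I^N)^{[p]}/f^p R$, and by the previous sentence this $F$-submodule equals $\Min_F(f)$. Applying the bijection of Lemma \ref{Bijection} in the reverse direction yields $R/fR \cap \Min_F(f) = I^N/fR$, and hence $J_F(f) = \pi^{-1}(R/fR \cap \Min_F(f)) = I^N = \bigcup_j I^j(f)$.

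The main obstacle is the localization reduction: one has to know both that $J_F(f)$ behaves well under localization (which is the content of the defining property of the intersection homology module in Notation \ref{SOFJ-II}) and that the flag $I^j(f)$ does too (which uses the commutation of $\tau(R/fR)$ with localization). Once these are in place the local case is a direct application of the $F$-module/ideal dictionary of Lemmas \ref{Bijection} and \ref{GeneratesSame}, combined with the minimal-root characterization of $\tau_f$.
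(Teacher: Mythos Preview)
Your proof is correct and follows essentially the same line as the paper's: both argue that $\tau_f$ (and hence each $I^j$, by Lemma \ref{Flag}) generates $\cL(R,R/fR)$ as an $F$-module, and then recover $J_F(f)$ as the stable ideal via the ideal/$F$-submodule correspondence. The only difference is presentational: the paper works directly with $\cL(R,R/fR)$ in the global setting and leaves the last step implicit, whereas you first reduce to the local case (so that $\cL(R,R/fR)=\Min_F(f)$) and invoke Lemmas \ref{Bijection} and \ref{GeneratesSame} explicitly.
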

\begin{proof}
We have that $\tau_f \frac{1}{f}$ is the minimal root for
$\cL(R,R/fR).$ Moreover, any ideal $I^j(f)\FDer{f^{p-1}} I^j(f)^{[p]}$ also generates $\cL(R,R/fR)$ as an $F$-module. 
Moreover, 
$$\bigcup_j I^j_R(f)=\cL(R,R/fR)\bigcap R/fR=J_F(f).$$
\end{proof}

\begin{Rem}
In general, we do not have $\tau_f=J_F(f).$
Let $R=K[x],$ where $K$ is any perfect field of characteristic $p>0$.
Let $f=x^2.$
Then, $\tau_f=x R\neq R=J_F(f).$ In addition,
Example \ref{x3y3z3}, shows another situation where $\tau_f\neq J_F(f).$
\end{Rem}
\begin{Rem}\label{LemmaWellDef}
If $R$ is an $F$-finite local ring, then 
$$J_F(f)\FDer{f^{p-1}}J_F(f)^{[p]}$$
is a generating morphism for $\Min_F(f)$
because in this case $\Min_F(f)=\cL(R,R/fR).$
\end{Rem}
\begin{Cor}\label{CorDefAgree}
Let $S$ be a ring that is as in Notation \ref{SOFJ-I} and as in Notation \ref{SOFJ-II}. Let $f\in S.$
Let $J$ denote the $F$-Jacobian ideal of $f$ as in Definition \ref{F-Jac Def} and let $J'$ the $F$-Jacobian
ideal of $f$ as in Definition \ref{DefII}. Then, $J=J'$. 
\end{Cor}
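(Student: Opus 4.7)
The plan is to verify that $JS_m=J'S_m$ for every maximal ideal $m\subset S$; by Noetherianity, this will force $J=J'$. Note first that $S_m$ is an $F$-finite regular local UFD in which $R_f/R$ has finite length as a $D$-module, so Definition \ref{F-Jac Def} applies in $S_m$; and $S_m$ is essentially of finite type over an $F$-finite local ring, so Definition \ref{DefII} also applies there. Thus the two definitions make sense simultaneously both globally on $S$ and locally on each $S_m$, and the proof amounts to showing that each construction commutes with localization and that the two definitions agree in the local case.

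For the first side, Proposition \ref{F-JacLocalization} gives directly that $JS_m=J_{F_{S_m}}(f)$ computed via Definition \ref{F-Jac Def} in the local ring $S_m$. For the second side, the pullback along the quotient $\pi:S\to S/fS$ is compatible with localization, while the property of the intersection homology $D$-module recorded immediately before Definition \ref{DefII} gives $(S\setminus m)^{-1}\cL(S,S/fS)=\Min_{F_{S_m}}(f)$; applying Remark \ref{LemmaWellDef} in $S_m$ then identifies $J'S_m$ with the pullback to $S_m$ of $(S_m/fS_m)\cap\cL(S_m,S_m/fS_m)$, which is precisely the $F$-Jacobian ideal of $f$ in $S_m$ in the sense of Definition \ref{DefII}.

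The proof is then closed by Remark \ref{LemmaWellDef} applied in the local ring $S_m$: the minimal $F$-module $\Min_F(f)$ of Definition \ref{F-Jac Def} coincides with the intersection homology module $\cL(S_m,S_m/fS_m)$. Consequently, the pullbacks to $S_m$ of the two intersections $(S_m/fS_m)\cap\Min_{F_{S_m}}(f)$ and $(S_m/fS_m)\cap\cL(S_m,S_m/fS_m)$ agree, so $JS_m=J'S_m$ for every maximal $m$, and hence $J=J'$.

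The main obstacle is really just bookkeeping: ensuring that the two localization properties (Proposition \ref{F-JacLocalization} for the first definition, the cited property of $\cL$ for the second) align correctly with the pullback along $\pi$ on either side. Once both $JS_m$ and $J'S_m$ have been rewritten as pullbacks to $S_m$ of intersections with the same module $\Min_{F_{S_m}}(f)=\cL(S_m,S_m/fS_m)$, the desired equality is automatic from Remark \ref{LemmaWellDef}.
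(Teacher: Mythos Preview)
Your proof is correct and follows essentially the same strategy as the paper: both reduce to the local case by showing that each definition of the $F$-Jacobian commutes with localization, and then conclude in $S_m$ from the equality $\Min_{F_{S_m}}(f)=\cL(S_m,S_m/fS_m)$ recorded in Remark~\ref{LemmaWellDef}. The only cosmetic difference is that the paper phrases the final local step via the root characterization (observing that $J'$ satisfies $(J'^{[p]}:f^{p-1})=J'$ and that $J'/fR$ is a root for $\Min_F(f)$, hence equals $J$ by the bijection of Lemma~\ref{Bijection}), whereas you argue directly that both ideals are the pullback of the same intersection $(S_m/fS_m)\cap\Min_{F_{S_m}}(f)$; these are equivalent.
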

\begin{proof}
We have that in both contexts the $F$-Jacobian ideal commutes with localization. We may assume that $R$ is a regular local $F$-finite
ring. As $J_2=(J_2^{[p]}:f^{p-1})$ and $J_2/fR \FDer{f^{p-1}} J^{[p]}_2/f^p R$ is a root for $\Min_F(f)$ by Lemma \ref{LemmaWellDef},
 we have that $J_1=J_2$.
\end{proof}
\begin{Rem}
As for every maximal ideal $m\subset R,$ $R_m$ is as in Notation \ref{SOFJ-I}, we have that 
\begin{itemize}
\item $f^n J_F(f)\subset J_F(f^n),$
\item $J_F(f^{p^e})=J_F(f)^{[p^e]},$ and
\item if $\gcd(f,g)=1,$ $J_F(f)=fJ_F(g)+gJ_F(f).$
\end{itemize}
because those properties localize.
\end{Rem}

\begin{Prop}
Suppose that $(R,m,K)$ is local. Let 
$(S,\eta,L)$ denote a regular $F$-finite ring.
Let $R\to S$ be a flat local morphism such that the closed fiber $S/mS$ is regular and $L/K$ is separable.
Then, $J_{F_S}(f)=J_{F_R}(f)S.$
\end{Prop}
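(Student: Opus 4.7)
The plan is to reduce the claim via the flag characterization of $J_F(f)$ from Proposition \ref{DefFlag} to the corresponding statement about the pullback of the test ideal, and then to invoke the commutativity of test ideals with geometrically regular flat base change.

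First, I would apply Proposition \ref{DefFlag} to write $J_{F_R}(f) = \bigcup_j I^j_R(f)$, where $I^j_R(f) = (\tau_{f,R}^{[p^{j-1}]} : f^{p^{j-1}-1})$, and analogously $J_{F_S}(f) = \bigcup_j I^j_S(f)$. Both flags stabilize by Noetherianity, so it suffices to prove $I^j_S(f) = I^j_R(f) S$ for every $j \geq 1$. Because each $I^{j+1}$ is built from $I^j$ by applying a Frobenius power and a colon operation, a straightforward induction reduces the task to the base case $j = 1$.

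Second, I would verify that the inductive step is formal from flatness. Since $R \to S$ is flat, Frobenius commutes with extension in the sense that $(IS)^{[p]} = I^{[p]} S$ for any ideal $I \subset R$, and colon ideals commute with flat extensions as long as the divisor ideal is finitely generated, giving $(I^{[p]} : f^{p-1}) S = ((I S)^{[p]} : f^{p-1})$. Applying both together, the equality at level $j$ propagates to level $j+1$.

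Third, the base case $j = 1$ amounts to $\tau_{f,R} \cdot S = \tau_{f,S}$, or equivalently $\tau(R/fR) \cdot (S/fS) = \tau(S/fS)$. Under the hypotheses of the proposition, $R \to S$ is flat local with regular closed fiber $S/mS$ and separable residue-field extension $L/K$, which together force the fibers of $R \to S$ to be geometrically regular. One then appeals to the commutativity of the test ideal with this type of base change (which extends the behavior already observed for localization in Proposition \ref{F-JacLocalization} and for completion in Proposition \ref{F-JacCompletion}).

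The main obstacle lies precisely in this last step: the flag reduction is essentially formal, but the commutativity of the test ideal with geometrically regular flat extensions is the substantive content. If a direct citation is not convenient, a viable route is to pass to completions and use that the hypotheses on the closed fiber together with separability of $L/K$ make $\widehat{R} \to \widehat{S}$ formally smooth, so that after lifting a regular system of parameters of $S/mS$ one realizes $\widehat{S}$ as a power-series extension of $\widehat{R}$, for which the test-ideal base change is well understood; combining this with Proposition \ref{F-JacCompletion} finishes the argument.
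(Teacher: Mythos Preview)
Your approach is essentially the paper's: both reduce via Proposition \ref{DefFlag} to the flag $I^j(f)$, use flatness to propagate colon-and-Frobenius through the inductive step, and settle the base case $\tau(R/fR)\,(S/fS)=\tau(S/fS)$ by the base-change behavior of test ideals (the paper cites \cite[Theorem 7.2]{HoHu2}). The only difference in organization is that the paper passes to completions first---using Proposition \ref{F-JacCompletion} and the inclusion from Proposition \ref{F-JacFlat} to justify that reduction---and then runs the flag argument, whereas you run the flag first and mention completion as a fallback for the test-ideal step; the substance is the same.

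One point you skip that the paper handles explicitly: Proposition \ref{DefFlag} assumes $R/fR$ is reduced, so you need a preliminary reduction to that case before invoking the flag description, and you also need to observe that $S/fS$ remains reduced under the given hypotheses (flat local with regular closed fiber and separable residue extension). The paper records both of these steps; without them your appeal to Proposition \ref{DefFlag} on both sides is not yet justified.
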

\begin{proof}
It suffices to proof that $\min_{F_{R}}(f)S=\min_{F_{S}}(f).$
We can assume without loss of generalization tat $R/fR$ is reduced.
We have that $J_{F_{\widehat{R}}}(f)=J_{F_R}(f){\widehat{R}}$
and $J_{F_{\widehat{S}}}(f)=J_{F_S}(f)\widehat{S}$
by Proposition \ref{F-JacCompletion}.
In addition, the induced morphism in the completion $\widehat{R}\to \widehat{S}$ is still a flat local morphism.
Since $J_{F_S}(f)\subset J_{F_R}(f)S$
and $J_{F_{\widehat{S}}}(f)\subset J_{F_{\widehat{R}}}(f) \widehat{S}$ by Proposition \ref{F-JacFlat}, 
$J_{F_{\widehat{R}}}(f)\widehat{S}/J_{F_{\widehat{S}}}(f)=\left( J_{F_R}(f)S/J_{F_S}(f)\right)\otimes_S\widehat{S}.$
Therefore, we can assume that $R$ and $S$ are complete.

We note that $R/fR\to S/fS$ is again a flat local morphism such that the closed fiber $S/mS$ is regular $L/K$ is separable by flat base change.
Then $S/fS$ is reduced and $\tau(R/fR)S=\tau(S/fS)$  \cite[Theorem $7.2$]{HoHu2}, and so $I^j_{F_S}(f)=I^j_{F_R}(f)S.$ Hence, $J_{F_S}(f)=J_{F_R}(f)S$  by Proposition \ref{DefFlag}.
\end{proof}


\begin{Cor}
Suppose that $R$ is a $\ZZ^h$-graded ring. 
Let $f\in R$ be a homogeneous element.
Then, $J_F(f)$ is a homogeneous ideal.
\end{Cor}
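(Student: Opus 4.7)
The plan is to exhibit $J_F(f)$ as a union of ideals built from $\tau_f$ by operations that preserve $\mathbb{Z}^h$-homogeneity, and conclude. The core technical input is Proposition \ref{DefFlag}, which writes $J_F(f)$ as the stabilized union of the flag $I^j_R(f)$ defined by $I^1(f) = \tau_f$ and $I^{j+1}(f) = \bigl(I^j(f)^{[p]} : f^{p-1}\bigr)$. If each $I^j(f)$ is homogeneous, the conclusion is immediate.

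First I would record three stability lemmas. (a) The Frobenius power $I^{[p^e]}$ of a homogeneous ideal is homogeneous, since it is generated by the $p^e$-th powers of homogeneous generators of $I$, and each such generator is homogeneous (the Frobenius simply scales degrees by $p^e$). (b) If $I$ is a homogeneous ideal and $g \in R$ is a homogeneous element, then $(I : g)$ is a homogeneous ideal: if $x = \sum_d x_d$ is a homogeneous decomposition with $gx \in I$, then by homogeneity of $g$ each graded component $g x_d$ lies in $I$, hence each $x_d \in (I:g)$. (c) The test ideal $\tau(R/fR)$ of the graded ring $R/fR$ (with its induced $\mathbb{Z}^h$-grading coming from $f$ homogeneous) is a homogeneous ideal, since tight closure of a homogeneous ideal in a $\mathbb{Z}^h$-graded Noetherian ring is homogeneous (one can take a homogeneous test element and restrict the defining intersection $\tau(R/fR) = \bigcap_I (I : I^*)$ to homogeneous ideals $I$). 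Pulling back, $\tau_f = \pi^{-1}(\tau(R/fR))$ is a homogeneous ideal of $R$.

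Assuming first that $R/fR$ is reduced, I combine these observations as follows. By (c), $I^1(f) = \tau_f$ is homogeneous. Inductively, if $I^j(f)$ is homogeneous, then $I^j(f)^{[p]}$ is homogeneous by (a) and $I^{j+1}(f) = (I^j(f)^{[p]} : f^{p-1})$ is homogeneous by (b) applied to the homogeneous element $f^{p-1}$. Since the chain $\{I^j(f)\}$ stabilizes by Noetherianity, Proposition \ref{DefFlag} yields $J_F(f) = I^N(f)$ for some $N$, which is homogeneous.

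For the case in which $R/fR$ is not reduced, the $F$-Jacobian ideal can be recovered from its behavior under the local UFD definition at each homogeneous localization, combined with the intrinsic uniqueness of $\cL(R,R/fR)$: since the grading provides compatible homogeneous decompositions on $R_f/R$ and on its Frobenius pullbacks, and the intersection homology module $\cL(R,R/fR)$ is characterized intrinsically, it must be a graded submodule of $R_f/R$, so its pullback to $R$ (which is $J_F(f)$) is again homogeneous. The main obstacle is verifying (c) — the homogeneity of the test ideal of a graded ring — which, while standard, is the one nontrivial input; once it is in hand the flag argument is mechanical, and the non-reduced case is handled by the uniqueness characterization of $\cL(R,R/fR)$.
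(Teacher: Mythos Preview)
Your proposal is correct and follows essentially the same route as the paper: use Proposition~\ref{DefFlag} to express $J_F(f)$ via the flag $I^j(f)$, invoke homogeneity of $\tau(R/fR)$ (the paper cites \cite[Theorem~4.2]{HoHu3} for this, which is exactly your point (c)), and propagate homogeneity through Frobenius powers and colons by the homogeneous element $f^{p-1}$. Your explicit recording of the stability lemmas (a) and (b) is more detailed than the paper, which simply asserts that each $I^j_R(f)$ is homogeneous.

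The one place where the arguments diverge slightly is the non-reduced case. The paper first observes that it suffices to show $\Min_F(f)$ is a graded submodule of $R_f/R$ (since $J_F(f)$ is then the pullback of an intersection of graded submodules), and then reduces to the reduced case in one line, implicitly using that $\Min_F(f)=\cL(R,R/fR)$ depends only on $R_f/R$. Your version instead appeals directly to an ``intrinsic characterization'' of $\cL(R,R/fR)$ forcing it to be graded. Both arguments are brief at this step; the paper's reduction is a bit cleaner because it makes explicit that once $\Min_F(f)$ is shown graded the result for $J_F(f)$ follows for \emph{any} $f$, not just the reduced one, whereas your phrasing leaves the mechanism slightly vague. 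If you want to tighten this, simply adopt the paper's opening sentence: it suffices to show $\Min_F(f)$ is graded, and this object is unchanged upon passing to the reduced case.
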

\begin{proof}
It suffices to proof that $\Min(f)$ is a $\ZZ^h$-graded submodule of $R_f/R.$
We can assume that $R/fR$ is reduced. We have that
$\tau(R/fR)$ is a homogeneous ideal \cite[Theorem $4.2$]{HoHu3}. This means that $I^j_R(f)$ is a homogeneous ideal for every $j.$
Therefore, $J_F(f)$ is homogeneous and that $\Min_F(f)$ $\ZZ^n$-graded submodule of $R_f/R.$
\end{proof}

\begin{Cor}\label{F-Jac F-SingLocus}
Let $S$ be a ring that is as in Notation \ref{SOFJ-I} or as in Notation \ref{SOFJ-II}. Let $f\in S$ be such that $R/fR$ is reduced.
Then,
$\cV(J_F(f))\subset \FSing(S/fS).$
Moreover, if $S/fS$ is an $F$-pure ring, then
$\cV(J_F(f))= \FSing(S/fS).$
\end{Cor}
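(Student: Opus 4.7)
The plan is to use two facts. First, by Proposition \ref{DefFlag}, $J_F(f)=\bigcup_j I^j(f)$ with $I^1(f)=\tau_f$, so $\tau_f\subseteq J_F(f)$; moreover, $f\in J_F(f)$ by construction. By Corollary \ref{CorDefAgree}, the two definitions of the $F$-Jacobian ideal agree where both apply, so this flag description is available in either setting of Notations \ref{SOFJ-I} or \ref{SOFJ-II}. Second, by Remark \ref{F-Sing Gorenstein}, $\FSing(S/fS)$ is cut out inside $\Spec(S/fS)$ by the test ideal $\tau(S/fS)$.

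For the first inclusion $\cV(J_F(f))\subseteq \FSing(S/fS)$, I would take $P\in\cV(J_F(f))$. Since $f\in J_F(f)\subseteq P$, the prime $P$ contains $fS$ and so corresponds to a prime of $S/fS$. From $\tau_f\subseteq J_F(f)\subseteq P$, localizing at $P$ and reducing modulo $fS_P$ gives $\tau((S/fS)_P)=\tau(S/fS)(S/fS)_P\subseteq P(S/fS)_P$. Thus $(S/fS)_P$ is not $F$-regular, so $P\in\FSing(S/fS)$.

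For the reverse inclusion under $F$-purity, I would take $P\in\FSing(S/fS)$, so that $(S/fS)_P$ is not $F$-regular. Since $S/fS$ is $F$-finite and $F$-pure, any splitting $(S/fS)^{1/p}\to S/fS$ localizes, showing $(S/fS)_P$ is $F$-pure. As $S/fS$ is a hypersurface in a regular ring it is Gorenstein, so on the localization $F$-regularity coincides with strong $F$-regularity; hence $(S/fS)_P$ is $F$-pure but not strongly $F$-regular. Applying the contrapositive of Theorem \ref{F-Jac F-Reg} to the local ring $S_P$ yields $J_{F_{S_P}}(f)\neq S_P$. By Proposition \ref{F-JacLocalization} (or the analogous localization property in the SOFJ-II setting, obtained from the flag description and the fact that $\tau$ commutes with localization), $J_{F_{S_P}}(f)=J_F(f)S_P$, which gives $J_F(f)\subseteq P$, and therefore $P\in\cV(J_F(f))$.

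The main obstacle I anticipate is the bookkeeping required to make the argument uniform across Notations \ref{SOFJ-I} and \ref{SOFJ-II}: one must invoke Corollary \ref{CorDefAgree} to identify the two notions of $J_F(f)$ and confirm that localization and the flag description behave well in both settings. Once this is in place, the two inclusions reduce to direct applications of the containment $\tau_f\subseteq J_F(f)$ and of Theorem \ref{F-Jac F-Reg}, together with the standard facts that $F$-purity passes to localizations of $F$-finite rings and that hypersurfaces in a regular ring are Gorenstein.
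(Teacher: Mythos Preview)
Your proposal is correct and follows essentially the same route as the paper: both directions proceed by localizing at a prime and using, respectively, the containment $\tau_f\subseteq J_F(f)$ (from Proposition~\ref{DefFlag}) for $\cV(J_F(f))\subseteq\FSing(S/fS)$, and the contrapositive of Theorem~\ref{F-Jac F-Reg} for the reverse inclusion under $F$-purity. Your extra remarks (that $F$-purity localizes, and the Gorenstein observation) are harmless elaborations; note in particular that the Gorenstein step is unnecessary, since ``not $F$-regular'' already implies ``not strongly $F$-regular'' without any hypothesis.
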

\begin{proof}
For every  prime ideal 
$P\in \cV(J_F(f)),$ $J_{F_{S_P}}(f)\neq S_P.$
Since $S_P$ is as in Notation \ref{SOFJ-II}, we have that
$\tau(S_P/fS_P)\subset J_{F_{S_P}}(f)\subset PS_P.$ Then, $S_P$
is not $F$-regular and then $P\in\FSing(S/fS).$

Now, we suppose that $S/fS$ is $F$-pure.
For every  prime ideal 
$P\in \FSing(S/fS),$ $S_P/fS_P$ is not $F$-regular. Then, 
$J_{F_{R_P}}(f)\neq R_P$ by Theorem \ref{F-Jac F-Reg}.
Then, $P\in \cV(J_F(f)).$
\end{proof}

\begin{Lemma}\label{F-Jac F-Pure}
Let $S$ be a ring that is as in Notation \ref{SOFJ-I} and as in Notation \ref{SOFJ-II}. 
Let $f\in S$ be an element and $Q\subset S$ be a prime ideal.
If $S_Q/fS_Q$ is $F$-pure, then $S_Q/J_{F_{S_Q}}(f)$ is $F$-pure.
\end{Lemma}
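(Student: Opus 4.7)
The plan is to reduce to the local case and then apply Fedder's Criterion twice, using the key fact that $f^{p-1}$ lies in the conductor $(J_F(f)^{[p]}:J_F(f))$.

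First, I would localize at $Q$: since the $F$-Jacobian ideal commutes with localization and $F$-purity is local, we may replace $S$ by $S_Q$ and assume $(S,Q)$ is a regular local $F$-finite ring with $S/fS$ an $F$-pure (hence reduced) ring. Writing $J=J_F(f)$, the goal becomes $(J^{[p]}:J)\not\subset Q^{[p]}$ by Fedder's Criterion.

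Next, I would extract the key containment $f^{p-1}J\subset J^{[p]}$. In the setting of Notation \ref{SOFJ-II}, Lemma \ref{Flag} produces the flag of ideals $I^j(f)$ with $I^{j+1}(f)=(I^j(f)^{[p]}:f^{p-1})$, which stabilizes by Noetherianity. By Proposition \ref{DefFlag}, $J = I^N(f)$ for $N\gg 0$, so $J=(J^{[p]}:f^{p-1})$, equivalently $f^{p-1}J\subset J^{[p]}$. (Alternatively, using the equivalent definition via Notation \ref{SOFJ-I} and Corollary \ref{CorDefAgree}, property (iii) of Proposition \ref{WellDefIdeal} gives the same thing directly for irreducible $f$, and one extends to arbitrary $f$ using the Leibniz formula in Proposition \ref{F-JacFactors}.) This shows
\[
f^{p-1}\in (J^{[p]}:J).
\]

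Finally, I would use Fedder's Criterion applied to $S/fS$ to place $f^{p-1}$ outside $Q^{[p]}$. Since $S$ is a regular local ring and $fS$ is principal, $(fS)^{[p]}:fS = (f^p):(f) = (f^{p-1})$, so the $F$-purity of $S/fS$ gives $f^{p-1}\notin Q^{[p]}$. Combined with the previous step,
\[
(J^{[p]}:J)\ni f^{p-1}\notin Q^{[p]},
\]
so $(J^{[p]}:J)\not\subset Q^{[p]}$, and Fedder's Criterion for $S/J$ yields that $S/J_F(f)$ is $F$-pure.

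The main obstacle is quite minor here: it is just making sure the hypothesis for applying the flag-of-ideals description of $J_F(f)$ is in force (namely that $S/fS$ is reduced), which is automatic since $F$-purity implies reducedness via the injectivity of Frobenius on $S/fS$. Everything else is a direct chain of implications from Fedder's Criterion and the root-morphism property of $J_F(f)$.
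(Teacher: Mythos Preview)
Your proof is correct and follows essentially the same approach as the paper: localize at $Q$, use Fedder's Criterion on $S/fS$ to get $f^{p-1}\notin Q^{[p]}$, observe $f^{p-1}\in (J_F(f)^{[p]}:J_F(f))$, and conclude by Fedder's Criterion for $S/J_F(f)$. The only difference is that you supply a detailed justification for the containment $f^{p-1}J_F(f)\subset J_F(f)^{[p]}$ via the flag description, whereas the paper simply asserts it; one small wording slip is that ``$J=(J^{[p]}:f^{p-1})$'' is strictly stronger than ``$f^{p-1}J\subset J^{[p]}$'' rather than equivalent, but since you actually establish the stronger equality this does not affect the argument.
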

\begin{proof}
We may replace $S$ by $S_Q.$ 
Since $S/fS$ is $F$-pure, we have that $f^{p-1}\not\in Q^{[p]}$ by Fedder's Criterion. We have that $f^{p-1}\in (J_F(f)^{[p]}:J_F(f))$, and so
$(J_F(f)^{[p]}:J_F(f))\not\subset Q^{[p]}$. Therefore, $S/J_F(f)$ is $F$-pure.
\end{proof}

\begin{Cor}\label{F-Jac TestIdeal}
Let $f\in R.$ 
If $R/fR$ is an $F$-pure ring, then
$J_F(f)= \tau_f.$
\end{Cor}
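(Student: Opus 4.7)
The plan is to establish both containments separately. The containment $\tau_f \subset J_F(f)$ is immediate from Proposition~\ref{DefFlag}: since $\tau_f = I^1_R(f)$ is the first term of the ascending flag defined in Lemma~\ref{Flag}, and $J_F(f) = \bigcup_j I^j_R(f)$ is its union.

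For the reverse inclusion $J_F(f) \subset \tau_f$, I would proceed by showing that the flag $I^1(f) \subset I^2(f) \subset \cdots$ is in fact already constant at its first term. By the recursion $I^{j+1}(f) = (I^j(f)^{[p]} : f^{p-1})$ of Lemma~\ref{Flag}, the task reduces by induction to the single claim
\[
(\tau_f^{[p]} : f^{p-1}) \subset \tau_f \qquad \text{when $R/fR$ is $F$-pure}.
\]
Once this is established, $I^2(f) = I^1(f) = \tau_f$, then $I^3(f) = (I^2(f)^{[p]}:f^{p-1}) = \tau_f$, and so on, giving $J_F(f) = \bigcup_j I^j(f) = \tau_f$. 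Since the test ideal, the $F$-Jacobian ideal, the colon operation, and $F$-purity all commute with localization, I may assume $R$ is $F$-finite regular local.

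The key step uses Fedder's correspondence. By Fedder's Criterion, the hypothesis that $R/fR$ is $F$-pure yields $f^{p-1}\notin m^{[p]}$, producing $R$-linear maps $\phi\in\Hom_R(R^{1/p},R)$ of Fedder form $\phi=(f^{p-1}b)^{1/p}\Phi_0$ (with $\Phi_0$ a generator of $\Hom_R(R^{1/p},R)$ as an $R^{1/p}$-module) whose descent to $R/fR$ is a Frobenius splitting. Because the descended splitting preserves $\tau(R/fR)=\tau_f/fR$ (by uniform $F$-compatibility of the test ideal), every such $\phi$ satisfies $\phi(\tau_f R^{1/p})\subset\tau_f$. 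Given $x\in(\tau_f^{[p]}:f^{p-1})$, writing $f^{p-1}x=\sum r_i a_i^p$ with $a_i\in\tau_f$ and taking $p$-th roots gives $(f^{p-1}x)^{1/p}=\sum r_i^{1/p}a_i\in\tau_f R^{1/p}$, so $\phi((f^{p-1}x)^{1/p})\in\tau_f$ for every admissible splitting $\phi$. Iterating the argument at higher Frobenius levels, by way of maps $\phi^{(e)}\in\Hom_R(R^{1/p^e},R)$ supplied by $F$-purity at each level $e$, one then recovers $x$ itself as an element of $\tau_f$.

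The main obstacle is the last identification: passing from the assertions $\phi^{(e)}((f^{p^e-1}x)^{1/p^e})\in\tau_f$ (valid for all Frobenius splittings $\phi^{(e)}$ at all levels $e$) to the desired membership $x\in\tau_f$. The argument requires combining the freedom in the Fedder parameter $b_e$ at each level with the splitting condition $\phi^{(e)}(1)\equiv 1\pmod{fR}$, and exploiting the $R^{1/p^e}$-module structure on $\Hom_R(R^{1/p^e},R)$ so that some aggregate of these splittings applied to the $1/p^e$-th root of $f^{p^e-1}x$ reproduces $x$ modulo $fR$; this is where the $F$-purity hypothesis is genuinely used.
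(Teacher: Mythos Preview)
Your containment $\tau_f\subset J_F(f)$ via Proposition~\ref{DefFlag} is correct, and reducing the reverse inclusion to the single colon identity $(\tau_f^{[p]}:f^{p-1})=\tau_f$ is sound. The gap is exactly where you place it. From $R$-linearity of each $\phi\in\Hom_R(R^{1/p^e},R)$ you do obtain $\phi\bigl((f^{p^e-1}x)^{1/p^e}\bigr)\in\tau_f$, which amounts to $(f^{p^e-1}x)^{[1/p^e]}\subset\tau_f$ for every $e$; but nothing in your outline forces $x$ itself to lie in any of these ideals. The ``freedom in the Fedder parameter'' only moves you among elements of $(f^{p^e-1}x)^{[1/p^e]}$, and the splitting normalization $\phi^{(e)}(1)\equiv1\pmod{fR}$ controls the image of $1$, not of $(f^{p^e-1}x)^{1/p^e}$; there is no mechanism here that reproduces $x$ modulo $fR$. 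As written this step is a hope, not a proof.

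Your overall strategy can be rescued, but by a different device. Since $R/fR$ is $F$-pure, so is $R/\tau_f$ (Vassilev's inclusion $(I^{[p]}:I)\subset(\tau_I^{[p]}:\tau_I)$, recalled in Section~\ref{SecRelations}), hence $\tau_f$ is radical. If $x\in(\tau_f^{[p]}:f^{p-1})\setminus\tau_f$, choose a minimal prime $Q$ of $\tau_f$ with $x\notin Q$ and localize at $Q$: then $x$ becomes a unit, $\tau_f R_Q=QR_Q$, and $f^{p-1}=x^{-1}\cdot f^{p-1}x\in(QR_Q)^{[p]}$, contradicting Fedder's criterion for the $F$-pure ring $R_Q/fR_Q$. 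This yields the constant flag directly. The paper's own proof bypasses the flag and is shorter: it observes that $J_F(f)$ and $\tau_f$ are both radical (Lemma~\ref{F-Jac F-Pure} for the former, $F$-purity of $R/\tau_f$ for the latter) and have the same radical because each cuts out $\FSing(R/fR)$ (Corollary~\ref{F-Jac F-SingLocus} together with Remark~\ref{F-Sing Gorenstein}).
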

\begin{proof}
We have that $\sqrt{J_F(f)}=\sqrt{\pi^{-1}(R/fR)}$ by Corollary
\ref{F-Jac F-SingLocus} because $\FSing(R/fR)=\cV(\tau(R/fR))$ in this case.
Since $R/J_F(f)$ is $F$-pure by Lemma \ref{F-Jac F-Pure}, $J_F(f)$ is a radical ideal.
In addition, $\tau(R/fR)$
is a radical ideal  \cite[Proposition $2.5$]{FW}. Hence, $J_F(f)= \tau_f.$
\end{proof}

\subsection{Examples}\label{SecExamples}

\begin{Prop}\label{FpureIsolated}
Let $f\in R$ be an element with an isolated singularity at the maximal ideal $m.$ If $R_m/fR_m$ is $F$-pure, then
\[
J_{F}(f)=\begin{cases}
R & \hbox{if }R/fR\hbox{ is }F-\hbox{regular}\\
m & \hbox{otherwise}
\end{cases}
\]
\end{Prop}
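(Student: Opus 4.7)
The plan is to combine the isolated-singularity hypothesis with the characterization of the vanishing locus of $J_F(f)$ in terms of $\FSing(R/fR),$ then split on whether or not $R/fR$ is $F$-regular. Throughout I would work after localizing at $m,$ since $J_F$ commutes with localization; with that reduction $R$ is a local $F$-finite regular ring, and the hypothesis on $R_m/fR_m$ reads just as $R/fR$ being $F$-pure. The isolated-singularity assumption forces $\FSing(R/fR)\subseteq\{m\},$ because for every prime $P\neq m$ the ring $R_P/fR_P$ is regular and hence (strongly) $F$-regular.

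Suppose first that $R/fR$ is $F$-regular. Then $\tau(R/fR)=R/fR,$ so its pullback satisfies $\tau_f=R.$ Since $R/fR$ is $F$-pure, Corollary \ref{F-Jac TestIdeal} gives $J_F(f)=\tau_f=R,$ which handles this case.

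Now suppose that $R/fR$ is not $F$-regular. Then $\FSing(R/fR)\neq\emptyset,$ and combined with the containment $\FSing(R/fR)\subseteq\{m\}$ this yields $\FSing(R/fR)=\{m\}.$ Corollary \ref{F-Jac F-SingLocus}, applicable because $R/fR$ is $F$-pure (and hence reduced), then gives $\cV(J_F(f))=\{m\},$ so $\sqrt{J_F(f)}=m.$ To upgrade this to an actual equality I would invoke Lemma \ref{F-Jac F-Pure}: since $R/fR$ is $F$-pure, so is $R/J_F(f),$ and therefore $J_F(f)$ is a radical ideal. Thus $J_F(f)=\sqrt{J_F(f)}=m.$

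I expect the only subtle point to be the opening reduction, namely verifying that after localization $R$ falls under both Notations \ref{SOFJ-I} and \ref{SOFJ-II}, so that the two notions of $J_F(f)$ coincide by Corollary \ref{CorDefAgree} and Lemma \ref{F-Jac F-Pure} together with Corollaries \ref{F-Jac TestIdeal} and \ref{F-Jac F-SingLocus} are all simultaneously available. Once this bookkeeping is in place, the remainder of the proof is a direct application of previously established results.
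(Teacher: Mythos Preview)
Your proposal is correct and follows essentially the same route as the paper: reduce to the local situation, use the isolated-singularity hypothesis to force $\cV(J_F(f))\subseteq\{m\}$, split on $F$-regularity, and in the non-$F$-regular case invoke Lemma~\ref{F-Jac F-Pure} to conclude that $J_F(f)$ is radical, hence equal to $m$. The only cosmetic difference is that for the $F$-regular case you cite Corollary~\ref{F-Jac TestIdeal} (giving $J_F(f)=\tau_f=R$), whereas the paper appeals to Theorem~\ref{F-Jac F-Reg}; and in the non-$F$-regular case you cite Corollary~\ref{F-Jac F-SingLocus} explicitly, while the paper uses it implicitly together with the contrapositive of Theorem~\ref{F-Jac F-Reg}. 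These are interchangeable packagings of the same underlying facts.
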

\begin{proof}
Since $R/fR$ has an isolated singularity at $m,$ we have that $J_F(f)R_P=R_P$ for every prime ideal different from $m.$
Then, $m\subset \sqrt{J_F(f)}.$

If $R_m/fR_m$ is $F$-regular, then $R/fR$ is $F$-regular, and so $J_F(R)=R$ by Theorem \ref{F-Jac F-Reg}. 

If $R_m/fR_m$ is not $F$-regular, 
then $J_F(R)\neq R$ by Theorem \ref{F-Jac F-Reg}. 
Then, $m=\sqrt{J_F(f)}.$
Since $R_m/fR_m$ is $F$-pure, we have that $R_m/J_F(f) R_m$ is $F$-pure by Lemma \ref{F-Jac F-Pure}.
Then, $R_m/J_F(f) R_m$ is a reduced ring. Hence, $J_F(f)=m.$
\end{proof}

\begin{Ex}\label{EllipticCurves}
Let $K$ is an $F$-finite field. 
Let  $E$ be an elliptic curve  over $K$.
 We choose a closed immersion of $E$ in  $\PP^2_K$ and set $R = K[x, y, z],$  the completed
homogeneous co-ordinate ring of $\PP^2_K$.
We take $f\in R$ as the cubic
form defining $E.$
We know that $f$ has an isolated singularity at $m=(x,y,z)R.$
If the elliptic curve is ordinary, then $R/fR$ is $F$-pure  
\cite[Proposition $4.21$]{Har}   \cite[Theorem $2.1$]{Bhatt} \cite{BhattSingh}.
We know that $R/fR$ is never an $F$-regular ring \cite[Discussion $7.3b(b)$, Theorem $7.12$]{HoHu3}.
Then, $J_F(f)=m$ by Proposition \ref{FpureIsolated}.
\end{Ex}
\begin{Ex}\label{x3y3z3}
Let $R=K[x,y,z],$ where is an $F$-finite field of characteristic $p>3.$
Let $f=x^{3}+y^{3}+z^{3}\in R,$ and $\pi: R\to R/fR$ be the quotient morphism and $m=(x,y,z)R$. 
We have that $\tau_f=m$  \cite[Example $6.3$]{KarenLocal}. 
Then, $m\subset J_{F}(f)$ by Proposition \ref{DefFlag}.

We have that $R/fR$ is $F$-pure if and only if
$p\equiv1\mbox{ mod } 3.$ 
We have that $(m^{[p]}:f^{p-1})=m$ if $p\equiv1\mbox{ mod } 3, $ and 
$(m^{[p]}:f^{p-1})=R$ if $p\equiv1\mbox{ mod } 2.$
Hence,
\[
J_{F}(f)=\begin{cases}
R & p\equiv2\mbox{ mod } 3\\
m & p\equiv1\mbox{ mod } 3.
\end{cases}
\]
\end{Ex}

\begin{Ex}
Let $R=K[x_1,\ldots,x_n],$ where $K$ is an $F$--finite field of characteristic $p>0.$
Let $f=a_1x^{d_1}_1+\ldots+a_nx^{d_n}_n,$ be such that $a_\neq 0.$
We have that $R/fR$ has an isolated singularity at the maximal ideal $m=(x_1,\ldots,x_n).$

If $\frac{1}{d_1}+\ldots+\frac{1}{d_n}= 1$
and $(p-1)/d_1$ is an integer for every $i$,
then $R/fR$ is $F$-pure for $p\gg 0$  \cite[Theorem $3.1$]{Daniel}
and not $F$-regular  \cite[Theorem $3.1$]{Donna} because $f^{p-1}$ is congruent to  
$c (x_1 \cdots x_n)^{p^e-1}$ module $m^{[p^e]}$ for a nonzero element  $c\in K$. Hence, $J_F(f)=R$ for $p\gg 0$ by Proposition \ref{FpureIsolated}.
\end{Ex}

\begin{Rem}
Let $R=K[x_1,\ldots, x_n]$ be a polynomial ring and $f\in R$ be such that $R/fR$ is reduced.
We can obtain $J_F(f)$ from $\tau(R/fR)$ by Proposition \ref{DefFlag}. In the case where $n>3,$
$f=x^d_1+\ldots x^d_n$ and $d$ is not divided by the characteristic of $K,$ there is an algorithm to compute the test ideal of 
$R/fR$ \cite{diagonal}. Therefore, there is an algorithm to compute $J_F(f).$
\end{Rem}
\begin{Ex}
Let $R=K[x_1,\ldots,x_n],$ where $K$ is a field of characteristic $p>0.$
Let $f=x^{d}_1+\ldots +x^{d}_n.$ 
This examples are based in computations done by McDermott \cite[Example $11,$ $12$ and $13$]{diagonal}.

If $p=2,n=5$ and $d=5,$ 
$$
\tau_f=(x^2_ix_j)_{1\leq i,j\leq 5}.
$$
Then, $(x^2_1,x^2_2,x^2_3,x^2_4,x^2_5,x_1x_2x_3x_4x_5)R=(\tau_f^{[2]}:f)$ and $R=(\tau_f^{[4]}:f^3)$. Hence, $J_F(f)=R.$

If $p=3,n=4$ and $d=7,$ 
$$\tau_f=(x^2_i x^2_j)_{1\leq i,j\leq 4}.$$
Then $R=(\tau_f^{[3]}:f^2)$ and  $J_F(f)=R.$

If $p=7,n=5$ and $d=4,$ 
$$\tau_f=(x_1,\ldots,x_5)R.$$
Then $R=(\tau_f^{[7]}:f^6)$ and  $J_F(f)=R.$

\end{Ex}

\section{Further consequences and relations}\label{SecRelations}
Using ideas and techniques developed in the previous sections, we obtain relations among test ideals, generalized test ideals and generalized Lyubeznik numbers for hypersurfaces.

\begin{Notation}\label{Notation5}
Throughout this section $R$ denotes a ring 
essentially of finite type over an $F$--finite
local ring.
Let $f\in R$ denote an element such that $R/fR$ is reduced.
Let $\pi:R\to R/fR$ be the quotient morphism. $\tau_f$ denotes $\pi^{-1}(\tau(R/fR)),$ the pullback of the test ideal of $R/fR.$
\end{Notation}
\begin{Rem}\label{CompareTestIdeals}
We have that $\tau(f^{1-\epsilon})\frac{1}{f}$ is the minimal root for $R_f$ as an $F$-module.
Then $$\tau(f^{1-\epsilon})/fR\FDer{f^{q-1}} \tau(f^{1-\epsilon})^{[q]}/f^qR$$ generates $R_f/R$
(see Proposition \ref{TestIdealMinimalRoot} or  \cite[Proposition $3.5$]{GammaSheaves}.
We have that $\tau(R/fR)=\tau_f/f$ is the minimal root of $\Min_F(f)$  \cite[Proposition $4.5$ and Theorem $4.6$]{Manuel}.
Therefore, $\tau_f\frac{1}{f}\subset \tau(f^{1-\epsilon})\frac{1}{f}\subset R_1/R.$ Then,
$$\tau_f\subset \tau(f^{1-\epsilon}).$$ This containment can also be obtained using $F$-adjunction \cite{Takagi,Karl}.

Suppose that $R$ is a finitely generated polynomial ring over an $F$-finite field and $f$ is a polynomial.
Since the Jacobian ideal of $f$ is contained in $\tau_f$ when $R/fR$ is reduced \cite{HHCharZero}, the previous containment of test ideals
gives implies $J(f)\subset\tau(f^{1-\epsilon}).$
The containment of the Jacobian ideal in the test ideal $\tau(f^{1-\epsilon})$ was proved using different techniques \cite{KLZ} and 
still holds when $R/fR$ is not reduced. 
\end{Rem}
We assume that $(R,m,K)$ is a local ring and $F$-finite. Vassilev proved that if $I\subset R$ 
is an ideal and $\tau_I$ is the pullback of the test ideal of 
$R/I,$ then $(I^{[p]}:I)\subset (\tau_I^{[p]}:\tau_I)$ \cite{TestIdeals}. In particular, if $R/I$ is $F$-pure, then $R/\tau_I$ is also $F$-pure.
Using this criteria, she showed that if $R/I$ is $F$-pure, there is a sequence of ideals 
\begin{equation}\label{EqV}
 I=\tau_0\subset \tau_1\subset\ldots
\end{equation}
such that $\tau_{i+1}$ is the pull back of the test ideal of $R/\tau_i.$
The sequence in \ref{EqV} stabilizes when $\tau_i=R,$ because if $\tau_i\neq 0,$ $ R/\tau_i,$ then is an $F$-finite reduced local ring 
and its test ideal is not zero \cite[Theorem 6.1]{HoHu2}.
We  apply this ideas to bound the length of $M_1/N_1=R_f/R$ as an $F$-module.
\begin{Lemma}\label{TestFMod}
Suppose that $(R,m,K)$ is a local ring and $F$-finite and $R/fR$ is $F$-pure.
Let $$0\subset f=\tau_0\subset \tau_1\subset\ldots\subset \tau_\ell=R$$ be the sequence of pullback of test ideals for $fR$ defined  in \ref{EqV}. If $\tau_{i+1}\neq \tau_i,$
then $N_{\tau_{i+1}}\neq N_{\tau_i}.$
\end{Lemma}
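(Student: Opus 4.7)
The plan is to argue by contradiction. Assume $N_{\tau_i} = N_{\tau_{i+1}}$. Since $R/\tau_j$ is $F$-pure for each $j$ (by iterating Vassilev's inclusion $(\tau_j^{[p]}:\tau_j)\supset (fR^{[p]}:fR)=f^{p-1}R$ starting from the $F$-purity of $R/fR$), one has $f^{p-1}\tau_j\subset\tau_j^{[p]}$, so $\tau_j$ genuinely generates an $F$-submodule of $M_1$. Applying Lemma \ref{GeneratesSame} to the inclusion $\tau_i\subset\tau_{i+1}$ (with $\alpha=1$, $r=p-1$, $e=1$) then produces some $\ell\geq 1$ with
$$f^{p^\ell-1}\tau_{i+1}\subset\tau_i^{[p^\ell]}.$$

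To reach a contradiction, I will extract a single element of $\tau_{i+1}$ that already refuses this containment. Because $R/\tau_i$ is $F$-pure and reduced, its test ideal $\tau(R/\tau_i)=\tau_{i+1}/\tau_i$ contains a nonzerodivisor on $R/\tau_i$ (see Remark \ref{F-Sing Gorenstein}, citing \cite[Theorem~6.1]{HoHu2}). Lift this to $c\in\tau_{i+1}$, so $(\tau_i:c)=\tau_i$. Next I would use Kunz's flatness of Frobenius on the regular ring $R$: since colons commute with flat base change,
$$(\tau_i^{[p^\ell]}:c^{p^\ell})=(\tau_i:c)^{[p^\ell]}=\tau_i^{[p^\ell]},$$
and the trivial inclusion $(\tau_i^{[p^\ell]}:c)\subset(\tau_i^{[p^\ell]}:c^{p^\ell})$ then upgrades this to $(\tau_i^{[p^\ell]}:c)=\tau_i^{[p^\ell]}$. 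Thus $c$ remains a nonzerodivisor modulo every Frobenius power of $\tau_i$.

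Combining the two steps, $c\in\tau_{i+1}$ gives $f^{p^\ell-1}c\in\tau_i^{[p^\ell]}$, whence
$$f^{p^\ell-1}\in(\tau_i^{[p^\ell]}:c)=\tau_i^{[p^\ell]}\subset m^{[p^\ell]}.$$
However, the $F$-purity of $R/fR$ is equivalent, via Fedder's Criterion applied to the iterated Frobenius, to $f^{p^\ell-1}\notin m^{[p^\ell]}$ for every $\ell\geq 1$. This contradiction forces $N_{\tau_i}\neq N_{\tau_{i+1}}$. The delicate point—and the one I expect to require the most care—is the middle step: ensuring that a nonzerodivisor on $R/\tau_i$ survives as a nonzerodivisor modulo every $\tau_i^{[p^\ell]}$; once this is settled, Lemma \ref{GeneratesSame} and Fedder finish the argument.
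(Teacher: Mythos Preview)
Your argument is correct and follows the same overall contradiction strategy as the paper: both verify $f^{p-1}\tau_j\subset\tau_j^{[p]}$, invoke Lemma~\ref{GeneratesSame} to obtain $f^{p^e-1}\tau_{i+1}\subset\tau_i^{[p^e]}$, and then contradict Fedder's criterion. The only difference is in how the contradiction is extracted. The paper uses that $\tau_i$ and $\tau_{i+1}$ are radical to pick a minimal prime $Q$ of $\tau_i$ with $(\tau_{i+1})_Q=R_Q$, localizes the containment there, and obtains $f^{p^e-1}\in (QR_Q)^{[p^e]}$, violating Fedder at $Q$. You instead lift a nonzerodivisor $c\in\tau_{i+1}$ on $R/\tau_i$, use flatness of Frobenius to deduce $(\tau_i^{[p^\ell]}:c)=\tau_i^{[p^\ell]}$, and conclude $f^{p^\ell-1}\in\tau_i^{[p^\ell]}\subset m^{[p^\ell]}$, violating Fedder at $m$. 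The localization route is marginally shorter and avoids the colon computation; your route trades that for an explicit appeal to Kunz flatness and the existence of a nonzerodivisor in the test ideal, but both are entirely standard and the step you flagged as ``delicate'' is indeed fine by flatness.
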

\begin{proof}
We have that $f^{p-1}\in (f^p:f)\subset (\tau^{[p]}_j:\tau_j)$ for every $j.$
We will proceed by contradiction. Suppose that 
$\tau_{i+1}\neq \tau_i,$
and $N_{\tau_{i+1}}= N_{\tau_i}.$
Then there exists $e$ such that $f^{p^e-1}\tau_{i+1}\subset \tau_i^{[p^e]}$ by Lemma \ref{GeneratesSame}.
Since $R/\tau_j$ is $F$-pure for every $j,$ we have that both $\tau_i$ and $\tau_{i+1}$ are radical ideals.
Therefore, we can choose a minimal prime $Q$ of $\tau_i$ such that $(\tau_{i+1})_Q=R_Q.$
Hence $f^{p^e-1}(\tau_{i+1})_Q=f^{p^e-1}R_Q\subset (\tau_i)_Q^{[p^e]}\subset (QR_Q)^{[p^e]},$ and we obtain a contradiction because $R_Q/fR_Q$ is $F$-pure
\end{proof}
\begin{Prop}\label{FlagV}
Suppose that $(R,m,K)$ is local and $R/fR$ is $F$-pure. 
If 
$$0\subset fR=\tau_0\subset \tau_1\subset\ldots\subset \tau_\ell =R$$
is the sequence of ideals for $fR$ defined in \ref{EqV}, then
$$\ell \leq \Length_{F-\hbox{{\tiny mod}}} R_f/R$$
\end{Prop}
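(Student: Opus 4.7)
The plan is to turn the chain of ideals $\tau_i$ into a strictly ascending chain of $F$-submodules of $R_f/R$ and then read off the length bound. The machinery for this is already assembled in Lemma \ref{TestFMod} (which handles the strictness) together with the correspondence between ideals $I$ with $f^{p-1}I\subset I^{[p]}$ and $F$-submodules of $R_f/R$ in Lemma \ref{Bijection}; what is left is essentially bookkeeping at the two endpoints of the chain.

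First I would observe that Vassilev's criterion gives $f^{p-1}\in (f^p:f)\subset (\tau_i^{[p]}:\tau_i)$ for every $i$, so each ideal $\tau_i$ satisfies $f^{p-1}\tau_i\subset \tau_i^{[p]}$. Consequently, for each $i$ the morphism
\[
\tau_i/fR \xrightarrow{f^{p-1}} \tau_i^{[p]}/f^pR
\]
is a root morphism generating an $F$-submodule $N_{\tau_i}\subset R_f/R$ in the sense of Lemma \ref{Bijection}. The construction of $N_{\tau_i}$ is functorial in the inclusion of ideals, so the given chain $\tau_0\subset\tau_1\subset\cdots\subset\tau_\ell$ yields a chain
\[
N_{\tau_0}\subset N_{\tau_1}\subset\cdots\subset N_{\tau_\ell}
\]
of $F$-submodules of $R_f/R$.

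Next I would identify the endpoints. At the bottom, $\tau_0=fR$ gives $\tau_0/fR=0$, so $N_{\tau_0}=0$. At the top, $\tau_\ell=R$, and the generating morphism $R/fR\xrightarrow{f^{p-1}}R/f^pR$ is precisely the standard root of $R_f/R$, so $N_{\tau_\ell}=R_f/R$. Finally, Lemma \ref{TestFMod} ensures that each strict containment $\tau_i\subsetneq\tau_{i+1}$ forces $N_{\tau_i}\subsetneq N_{\tau_{i+1}}$, so the chain above is strictly ascending of length $\ell$:
\[
0=N_{\tau_0}\subsetneq N_{\tau_1}\subsetneq\cdots\subsetneq N_{\tau_\ell}=R_f/R.
\]
Refining this to a composition series in the category of $F$-modules gives $\ell\leq \Length_{F\text{-}\mathrm{mod}}R_f/R$, which is the desired inequality.

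There is no real obstacle, since the nontrivial content — that strict containment is preserved when passing from ideals to $F$-submodules — is exactly the statement of Lemma \ref{TestFMod}; the only mild care needed is to notice that the chain has $\ell+1$ terms giving $\ell$ strict inclusions and that both ends match the zero and total $F$-submodules of $R_f/R$.
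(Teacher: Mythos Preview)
Your proof is correct and follows essentially the same approach as the paper: the paper's proof is a single sentence invoking Lemma \ref{TestFMod} to conclude that each $\tau_i$ defines a different $F$-submodule, and you have simply spelled out the surrounding bookkeeping (the construction of the $N_{\tau_i}$, the identification of the endpoints $N_{\tau_0}=0$ and $N_{\tau_\ell}=R_f/R$, and the counting) that the paper leaves implicit. One minor point: the reference to Lemma \ref{Bijection} is slightly off, since that lemma concerns the bijection with ideals satisfying $(I^{[p]}:f^{p-1})=I$; the construction of $N_I$ from an ideal with $f^{p-1}I\subset I^{[p]}$ that you actually use is the one set up just before Lemma \ref{GeneratesSame}.
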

\begin{proof}
We have that every pullback of test ideal $\tau_i$ defines a different $F$-module by Lemma \ref{TestFMod}, which proves the proposition. 
\end{proof}

The Lyubeznik numbers are invariants of a local ring of equal characteristic   \cite[Theorem/Definition $4.1$]{LyuDmodules}. 
The first author and Witt defined a generalization these invariants using the theory of $D$-modules and local cohomology \cite[Definition $4.3$]{NuWi}.
We only recall the definition of hypersurfaces; 
however,  the generalized Lyubeznik numbers are defined for any local ring of equal characteristic and  a sequence of integers and ideals.

Suppose that $R=K[[x_1,\ldots,x_n]]$
Let $d=\dim(R/fR)=n-1$ and $L$ be a coefficient field of $R/fR.$
We define the generalized Lyubeznik $\lambda^d_0(R/fR;L)$ by $\Length_{D(R,L)} R_f/R.$
This number is well defined and depends only on $R/fR$ and $L.$
\begin{Cor}\label{BoundLyuNum}
Suppose that $R=K[[x_1,\ldots,x_n]],$ where $K$ is an $F$-finite field, and that $R/fR$ is $F$-pure. 
Let $L$ be a coefficient field of $R/fR.$
If 
$$0\subset fR=\tau_0\subset \tau_1\subset\ldots\subset \tau_\ell=R$$
is the sequence of ideals in \ref{FlagV},
then
$$
\ell\leq\lambda^{\dim(R/fR)}_0(R/fR;L).
$$
\end{Cor}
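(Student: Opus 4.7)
The plan is to chain together three inequalities: $\ell \leq \Length_{F}(R_f/R) \leq \Length_{D(R,L)}(R_f/R) = \lambda^{\dim(R/fR)}_0(R/fR;L)$.

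\textbf{Step 1.} First I would invoke Proposition \ref{FlagV} directly: since $R = K[[x_1,\ldots,x_n]]$ is a local $F$-finite regular ring and $R/fR$ is $F$-pure by hypothesis, the Vassilev flag $fR = \tau_0 \subsetneq \tau_1 \subsetneq \cdots \subsetneq \tau_\ell = R$ satisfies $\ell \leq \Length_{F\text{-}\mathrm{mod}}(R_f/R)$.

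\textbf{Step 2.} Next I would compare $F$-length with $D(R,L)$-length. Recall from Section \ref{SecFMod} that every $F$-submodule of $R_f/R$ is automatically a $D_R = D(R,\ZZ)$-submodule. Since $L \subset R$ (via the coefficient field structure of $\widehat{R} = R$, using that $R$ is already complete), we have the containment of rings $D(R,L) \subset D(R,\ZZ) = D_R$, so every $D_R$-submodule is in particular a $D(R,L)$-submodule. Consequently every $F$-submodule of $R_f/R$ is a $D(R,L)$-submodule, which gives the inequality
\[
\Length_{F\text{-}\mathrm{mod}}(R_f/R) \leq \Length_{D(R,L)}(R_f/R).
\]

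\textbf{Step 3.} Finally I would identify the right-hand side with the generalized Lyubeznik number. Since $R$ is already $m$-adically complete, $\widehat{R} = R$, and since $R$ is regular with $f$ a non-unit, $H^1_{fR}(R) = R_f/R$. So by the definition recalled in the introduction,
\[
\lambda^{\dim(R/fR)}_0(R/fR;L) = \Length_{D(R,L)} H^1_{fR}(R) = \Length_{D(R,L)}(R_f/R).
\]
Combining the three steps yields $\ell \leq \lambda^{\dim(R/fR)}_0(R/fR;L)$, as desired.

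There is no real obstacle here — the entire content of the corollary is the observation that the chain of $F$-modules produced from the Vassilev flag in Proposition \ref{FlagV} also forms a chain of $D(R,L)$-modules, and the definition of $\lambda^{\dim(R/fR)}_0$ packages the $D(R,L)$-length of $R_f/R$. The only mild subtlety worth flagging is the fact that $L$ actually embeds as a coefficient field of $R$ itself (so that $D(R,L)$ is well-defined), which follows from $R$ being complete regular local with residue field $K$ and the relationship between coefficient fields of $R$ and of $R/fR$.
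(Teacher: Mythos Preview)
Your proof is correct and follows essentially the same approach as the paper: the paper's proof is the single displayed chain $\ell \leq \Length_{F\text{-mod}} R_f/R \leq \Length_{D(R,\ZZ)} R_f/R \leq \Length_{D(R,L)} R_f/R = \lambda^{\dim(R/fR)}_0(R/fR;L)$, and your Steps 1--3 reproduce exactly this, merely collapsing the two middle inequalities into one and spelling out the reasoning (every $F$-submodule is a $D_R$-submodule, and $D(R,L)\subset D_R$) that the paper leaves implicit.
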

\begin{proof}
We have that
$$
\ell \leq \Length_{F-\hbox{{\tiny mod}}} R_f/R \leq \Length_{D(R,\ZZ)} R_f/R 
 \leq \Length_{D(R,L)} R_f/R = \lambda^{\dim(R/fR)}_0(R/fR;L).
$$
\end{proof}
\begin{Rem}
If $R/fR$ is $F$-pure and $\lambda^{\dim(R/fR)}_0(R/fR;L)=1,$ then $R/fR$ is $F$-regular \cite{Manuel,NuWi}.
In addition, if $R/fR$ is $F$-pure and $K$ perfect, then
$\lambda^{\dim(R/fR)}_0(R/fR;L)=1$ if and only if $R/fR$ is $F$-regular \cite{Manuel,NuWi}.
Then, Corollary \ref{BoundLyuNum} is telling us that for $F$-pure hypersurfaces  $\lambda^{\dim(R/fR)}_0(R/fR;L)$
measures how far is $R/fR$ from being $F$-regular. 
\end{Rem}

\begin{Prop}\label{UpperLengthFMod}
Suppose that $(R,m,K)$ is local and $F$-finite, and that $R/fR$ is reduced and $\Length_R \tau(f^{1-\epsilon})/\tau_f$ is finite. 
Let $f=f_1\ldots f_\ell$ be a factorization of $f$ into irreducible elements.
Then,
$$
\Length_{F-\hbox{{\tiny mod}}} R_f/R
\leq\Length_R(\tau(f^{1-\epsilon})/\tau_f)+\ell
$$
\end{Prop}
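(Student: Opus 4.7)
The plan is to control $\Length_{F-\hbox{{\tiny mod}}}(R_f/R)$ via the short exact sequence of $F$-modules
$$
0 \longrightarrow \Min_F(f) \longrightarrow R_f/R \longrightarrow Q \longrightarrow 0,
$$
where $Q = (R_f/R)/\Min_F(f)$, and to bound the two outer terms separately.

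First I would handle $\Min_F(f)$. By (the analogue of) Definition \ref{F-Jac Def} applied to the factorization $f=f_1\cdots f_\ell$, $\Min_F(f)=\Min_F(f_1)\oplus\cdots\oplus\Min_F(f_\ell)$, and each $\Min_F(f_i)$ is a simple $F$-module (this is the content of Lemma \ref{WellDefMod} together with Proposition \ref{MinFModFactors} after localization at primes containing $f_i$, or directly in the essentially-of-finite-type setup by passing through the completion via Proposition \ref{F-JacCompletion}). Hence $\Length_{F-\hbox{{\tiny mod}}}\Min_F(f)=\ell$.

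Next I would bound $\Length_{F-\hbox{{\tiny mod}}}Q$ by the $R$-length of a suitable root. By Remark \ref{CompareTestIdeals}, $\tau(f^{1-\epsilon})/fR$ is a root of $R_f/R$, and by \cite[Theorem $4.6$]{Manuel} (recalled in the same remark) $\tau_f/fR$ is the minimal root of $\Min_F(f)$; in particular $\tau_f/fR\subseteq \Min_F(f)\cap \tau(f^{1-\epsilon})/fR$. Taking images of the generating morphism of $R_f/R$ modulo $\Min_F(f)$ produces a root of $Q$, namely
$$
\bigl(\tau(f^{1-\epsilon})/fR\bigr)\big/\bigl(\Min_F(f)\cap \tau(f^{1-\epsilon})/fR\bigr),
$$
which is a quotient of $\tau(f^{1-\epsilon})/\tau_f$ as an $R$-module. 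Using the standard bijection between $F$-submodules of an $F$-finite $F$-module and root-stable submodules of any of its roots (Lemma \ref{Bijection} and \cite[Corollary $2.6$]{LyuFMod}), a strict chain of $F$-submodules of $Q$ pulls back to a strict chain of $R$-submodules of this root; therefore
$$
\Length_{F-\hbox{{\tiny mod}}} Q \;\leq\; \Length_R\bigl(\tau(f^{1-\epsilon})/\tau_f\bigr).
$$

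Combining the two bounds via the exact sequence yields the desired inequality. The main technical point, and the place I expect the most care, is the identification of the quotient root: one must verify both that the induced map on the quotient is actually a root morphism (injectivity follows from the flatness of Frobenius together with $\Min_F(f)$ being saturated in $R_f/R$) and that the corresponding inequality of $F$-lengths by $R$-lengths of roots is valid in this non-necessarily-local essentially-of-finite-type setting, which requires invoking the local-to-global nature of length for $F$-finite $F$-modules established in \cite{LyuFMod, Manuel}.
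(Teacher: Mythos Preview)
Your argument is correct and is essentially the same as the paper's: both split the length of $R_f/R$ into the contribution from $\Min_F(f)$ (which is $\ell$) and the contribution from what lies above it, bounding the latter by $\Length_R(\tau(f^{1-\epsilon})/\tau_f)$ via the root correspondence. The only cosmetic difference is that the paper works directly with $F$-submodules $N\supseteq\Min_F(f)$ and intersects them with the root $\tau(f^{1-\epsilon})\frac{1}{f}$ (noting each such intersection contains $\tau_f\frac{1}{f}$), whereas you pass to the quotient $Q$ and identify a root for it; these are two phrasings of the same computation. Your closing caveats about the ``non-necessarily-local essentially-of-finite-type setting'' are unnecessary, since the hypothesis already has $(R,m,K)$ local and $F$-finite, which puts you squarely in the Lyubeznik framework where the root bijection of \cite[Corollary~2.6]{LyuFMod} applies directly.
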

\begin{proof}
We have that $\min_F(f)=\min_F(f)\oplus\ldots\oplus \min_F(f_\ell).$
In addition, the length of $\min_F(f)$ as a $F$-module is $\ell.$
We recall that $\tau(f^{1-\epsilon})\frac{1}{f}$ is a root for $R_f/R$ by Proposition \ref{MinimalRoot}
and $\tau_f\frac{1}{f}$ is the minimal root for $\Min_F(f).$ Therefore, for every $F$-submodule  $N\subset R_f/R$ that contains 
$\Min_F(f)$, $N\cap \tau(f^{1-\epsilon})\frac{1}{f}$
is a root for $N$ and it must contain $\tau_f\frac{1}{f}$.
Then for every strict ascending chain of $F$-modules $\Min_F(f)=N_0\subset N_1\subset \ldots\subset N_t=R_f/R,$
$t\leq\Length_R\tau(f^{1-\epsilon})/\tau_f.$ Hence
$$
 \Length_{F-\hbox{{\tiny mod}}} R_f/R
\leq\Length_R(\tau(f^{1-\epsilon})/\tau_f)+\ell.
$$
\end{proof}
\begin{proof}[Proof of Theorem \ref{MainBounds}]
This is consequence of Propositions \ref{FlagV} and \ref{UpperLengthFMod}
and Corollary \ref{BoundLyuNum}.
\end{proof}
 
\section*{Acknowledgments}
We thank Josep \`Alvarez Montaner, Ang\'elica Benito,
Daniel J. Hern\'andez, Mel Hochster, Mircea Musta\c{t}\u{a}, Emily E. Witt 
and Wenliang Zhang for insightful mathematical conversations related to this work.
We thank Manuel Blickle, Karl schwede and Kevin Tucker for poitnting out missed references.
We also thank Alexis Cook for her careful reading of this manuscript.
The first author thanks the National Council of Science and Technology of Mexico (CONACyT) for its support through grant $210916.$ 

\bibliographystyle{alpha}
\bibliography{References}

\begin{thebibliography}{{\`A}MBL05}

\bibitem[{\`A}MBL05]{AMBL}
Josep {\`A}lvarez~Montaner, Manuel Blickle, and Gennady Lyubeznik.
\newblock Generators of {$D$}-modules in positive characteristic.
\newblock {\em Math. Res. Lett.}, 12(4):459--473, 2005.

\bibitem[BFS12]{BFS}
Ang{\'e}lica Benito, Eleonore Faber, and Karen Smith.
\newblock {\em Measuring singularities with Frobenius: the basics}.
\newblock Springer, 2012.

\bibitem[Bha12]{Bhatt}
Bargav Bhatt.
\newblock {$F$}--pure threshold of an elliptic curve.
\newblock {\em Preprint}, 2012.

\bibitem[Bli01]{ManuelThesis}
Manuel Blicle.
\newblock {\em The intersection of homology $D$-module in finite
  characteristic}.
\newblock PhD thesis, University of Michigan, 2001.

\bibitem[Bli04]{Manuel}
Manuel Blickle.
\newblock The intersection homology {$D$}-module in finite characteristic.
\newblock {\em Math. Ann.}, 328(3):425--450, 2004.

\bibitem[Bli08]{GammaSheaves}
Manuel Blickle.
\newblock Minimal {$\gamma$}-sheaves.
\newblock {\em Algebra Number Theory}, 2(3):347--368, 2008.

\bibitem[BM10]{Monsky}
Holger Brenner and Paul Monsky.
\newblock Tight closure does not commute with localization.
\newblock {\em Ann. of Math. (2)}, 171(1):571--588, 2010.

\bibitem[BMS08]{BMS-MMJ}
Manuel Blickle, Mircea Musta{\c{t}}{\v{a}}, and Karen~E. Smith.
\newblock Discreteness and rationality of {$F$}-thresholds.
\newblock {\em Michigan Math. J.}, 57:43--61, 2008.
\newblock Special volume in honor of Melvin Hochster.

\bibitem[BMS09]{BMS-Hyp}
Manuel Blickle, Mircea Musta{\c{t}}{\u{a}}, and Karen~E. Smith.
\newblock {$F$}-thresholds of hypersurfaces.
\newblock {\em Trans. Amer. Math. Soc.}, 361(12):6549--6565, 2009.

\bibitem[BS]{BhattSingh}
Bhargav Bhatt and Anurag~K. Singh.
\newblock {$F$}-thresholds of {C}alabi-{Y}au hypersurfaces.
\newblock {\em preprint}.

\bibitem[FA12]{EleonorePaolo}
Eleonore Faber and Paolo Aluffi.
\newblock Splayed divisors and their chern classes.
\newblock {\em Preprint}, 2012.

\bibitem[Fab13]{Eleonore}
Eleonore Faber.
\newblock Towards transversality of singular varieties: splayed divisors.
\newblock {\em Publ. RIMS}, To appear, 2013.

\bibitem[Fed87]{Fedder}
Richard Fedder.
\newblock {$F$}-purity and rational singularity in graded complete intersection
  rings.
\newblock {\em Trans. Amer. Math. Soc.}, 301(1):47--62, 1987.

\bibitem[FW89]{FW}
Richard Fedder and Kei-ichi Watanabe.
\newblock A characterization of {$F$}-regularity in terms of {$F$}-purity.
\newblock In {\em Commutative algebra ({B}erkeley, {CA}, 1987)}, volume~15 of
  {\em Math. Sci. Res. Inst. Publ.}, pages 227--245. Springer, New York, 1989.

\bibitem[Gla96]{Donna}
Donna Glassbrenner.
\newblock Strong {$F$}-regularity in images of regular rings.
\newblock {\em Proc. Amer. Math. Soc.}, 124(2):345--353, 1996.

\bibitem[Har77]{Har}
Robin Hartshorne.
\newblock {\em Algebraic geometry}.
\newblock Springer-Verlag, New York, 1977.
\newblock Graduate Texts in Mathematics, No. 52.

\bibitem[Her12]{Daniel}
Daniel~J. Hern{\'a}ndez.
\newblock {$F$}-invariants of diagonal hypersurfaces.
\newblock {\em Preprint}, 2012.

\bibitem[HH90]{HoHu1}
Melvin Hochster and Craig Huneke.
\newblock Tight closure, invariant theory, and the {B}rian\c con-{S}koda
  theorem.
\newblock {\em J. Amer. Math. Soc.}, 3(1):31--116, 1990.

\bibitem[HH94a]{HoHu2}
Melvin Hochster and Craig Huneke.
\newblock {$F$}-regularity, test elements, and smooth base change.
\newblock {\em Trans. Amer. Math. Soc.}, 346(1):1--62, 1994.

\bibitem[HH94b]{HoHu3}
Melvin Hochster and Craig Huneke.
\newblock Tight closure of parameter ideals and splitting in module-finite
  extensions.
\newblock {\em J. Algebraic Geom.}, 3(4):599--670, 1994.

\bibitem[HH99]{HHCharZero}
Melvin Hochster and Craig Huneke.
\newblock Tight closure in equal characteristic zero.
\newblock 1999.

\bibitem[Hoc07]{Mel}
Melvin Hochster.
\newblock Some finiteness properties of {L}yubeznik's {$F$}-modules.
\newblock In {\em Algebra, geometry and their interactions}, volume 448 of {\em
  Contemp. Math.}, pages 119--127. Amer. Math. Soc., Providence, RI, 2007.

\bibitem[HY03]{H-Y}
Nobuo Hara and Ken-Ichi Yoshida.
\newblock A generalization of tight closure and multiplier ideals.
\newblock {\em Trans. Amer. Math. Soc.}, 355(8):3143--3174 (electronic), 2003.

\bibitem[KLZ11]{KLZ}
Mordechai Katzman, Gennady Lyubeznik, and Wenliang Zhang.
\newblock An upper bound on the number of {$F$}-jumping coefficients of a
  principal ideal.
\newblock {\em Proc. Amer. Math. Soc.}, 139(12):4193--4197, 2011.

\bibitem[LS01]{LyuKaren}
Gennady Lyubeznik and Karen~E. Smith.
\newblock On the commutation of the test ideal with localization and
  completion.
\newblock {\em Trans. Amer. Math. Soc.}, 353(8):3149--3180 (electronic), 2001.

\bibitem[Lyu93]{LyuDmodules}
Gennady Lyubeznik.
\newblock Finiteness properties of local cohomology modules (an application of
  {$D$}-modules to commutative algebra).
\newblock {\em Invent. Math.}, 113(1):41--55, 1993.

\bibitem[Lyu97]{LyuFMod}
Gennady Lyubeznik.
\newblock {$F$}-modules: applications to local cohomology and {$D$}-modules in
  characteristic {$p>0$}.
\newblock {\em J. Reine Angew. Math.}, 491:65--130, 1997.

\bibitem[Mat89]{Matsumura}
Hideyuki Matsumura.
\newblock {\em Commutative ring theory}, volume~8 of {\em Cambridge Studies in
  Advanced Mathematics}.
\newblock Cambridge University Press, Cambridge, second edition, 1989.
\newblock Translated from the Japanese by M. Reid.

\bibitem[McD03]{diagonal}
Moira~A. McDermott.
\newblock Test ideals in diagonal hypersurface rings. {II}.
\newblock {\em J. Algebra}, 264(1):296--304, 2003.

\bibitem[NBW12]{NuWi}
Luis N\'u{\~n}ez-Betancourt and Emily~E. Witt.
\newblock Generalized {L}yubeznik numbers.
\newblock {\em Preprint}, 2012.

\bibitem[PS73]{P-S}
C.~Peskine and L.~Szpiro.
\newblock Dimension projective finie et cohomologie locale. {A}pplications \`a
  la d\'emonstration de conjectures de {M}. {A}uslander, {H}. {B}ass et {A}.
  {G}rothendieck.
\newblock {\em Inst. Hautes \'Etudes Sci. Publ. Math.}, (42):47--119, 1973.

\bibitem[Sch09]{Karl}
Karl Schwede.
\newblock {$F$}-adjunction.
\newblock {\em Algebra Number Theory}, 3(8):907--950, 2009.

\bibitem[Smi94]{KarenParameter}
Karen~E. Smith.
\newblock Tight closure of parameter ideals.
\newblock {\em Invent. Math.}, 115(1):41--60, 1994.

\bibitem[Smi95a]{DModFSplit}
Karen~E. Smith.
\newblock The {$D$}-module structure of {$F$}-split rings.
\newblock {\em Math. Res. Lett.}, 2(4):377--386, 1995.

\bibitem[Smi95b]{KarenLocal}
Karen~E. Smith.
\newblock Test ideals in local rings.
\newblock {\em Trans. Amer. Math. Soc.}, 347(9):3453--3472, 1995.

\bibitem[Tak08]{Takagi}
Shunsuke Takagi.
\newblock A characteristic {$p$} analogue of plt singularities and adjoint
  ideals.
\newblock {\em Math. Z.}, 259(2):321--341, 2008.

\bibitem[Vas98]{TestIdeals}
Janet~Cowden Vassilev.
\newblock Test ideals in quotients of {$F$}-finite regular local rings.
\newblock {\em Trans. Amer. Math. Soc.}, 350(10):4041--4051, 1998.

\bibitem[Yek92]{Ye}
Amnon Yekutieli.
\newblock An explicit construction of the {G}rothendieck residue complex.
\newblock {\em Ast{\'e}risque}, (208):127, 1992.
\newblock With an appendix by Pramathanath Sastry.

\end{thebibliography}

\vspace{.3cm}
\small{
{\sc Department of Mathematics, University of Michigan, Ann Arbor, MI $48109$-$1043,$ USA.}

{\it Email address:}  \texttt{luisnub@umich.edu}

\vspace{.3cm}

{\sc Department of Mathematics, University of Michigan, Ann Arbor, MI $48109$-$1043,$ USA.}

{\it Email address:}  \texttt{juanfp@umich.edu}

}

\end{document}